\newtheorem{thm}{Theorem}
\numberwithin{equation}{section}
\begin{document}

\title{Improved uniform error bounds of exponential wave integrator method for long-time dynamics of the space fractional Klein-Gordon equation with weak nonlinearity}
%\subtitle{Do you have a subtitle?\\ If so, write it here}

%\titlerunning{Short form of title}        % if too long for running head

\author{Junqing Jia$^{1}$   \and Xiaoyun Jiang$^{1}$
}

%\authorrunning{Short form of author list} % if too long for running head

\institute{ \hspace*{2em}Junqing Jia \at
              %School of Mathematics, Shandong University, Jinan 250100, PR China \\
              %Tel.: +123-45-678910\\
              %Fax: +123-45-678910\\
              \hspace*{2em}\email{18353112314@163.com}             \\
%             \emph{Present address:} of F. Author  %  if needed
           %\and
%            Yue Feng \at
%              %France\\
%               \email{yue.feng@sorbonne-universite.fr}\\
           \and
           \Letter\hspace*{1em}Xiaoyun Jiang \at
              %School of Mathematics, Shandong University, Jinan 250100, PR China \\
              \hspace*{2em}\email{wqjxyf@sdu.edu.cn}\\
              \and
              %Hui Zhang \at
%              \email{zhangh@sdu.edu.cn}\\
%           \and
           $^{1}$\hspace*{1.5em}School of Mathematics, Shandong University, Jinan 250100, PR China \\
           %\and
           %$^{2}$  Laboratoire Jacques-Louis Lions, Sorbonne Universit\'{e}, Paris, 75007, France
}

\date{Received: date / Accepted: date}
% The correct dates will be entered by the editor

\maketitle

\begin{abstract}
An improved uniform error bound at $O\left(h^m+\varepsilon^2 \tau^2\right)$ is established in $H^{\alpha/2}$-norm  for the long-time dynamics of the nonlinear space fractional Klein-Gordon equation (NSFKGE). A second-order exponential wave integrator (EWI) method is used to semi-discretize NSFKGE in time and the Fourier spectral method in space is applied to derive the full-discretization scheme. \textbf{Regularity compensation oscillation} (RCO) technique is employed to prove the improved uniform error bounds at $O\left(\varepsilon^2 \tau^2\right)$ in temporal semi-discretization and $O\left(h^m+\varepsilon^2 \tau^2\right)$ in full-discretization up to the long- time $T_{\varepsilon}=T / \varepsilon^2$ ($T>0$ fixed), respectively. Complex NSFKGE and oscillatory complex NSFKGE with nonlinear terms of general power exponents are also discussed. Finally, the correctness of the theoretical analysis and the effectiveness of the method are verified by numerical examples.

\keywords{Nonlinear space fractional Klein-Gordon equation \and Long-time dynamics \and Exponential wave integrator method \and Regularity compensation oscillation (RCO) \and Improved uniform error bound}
\subclass{35R11 \and 35Q55 \and 65M12 \and 65M15}
\end{abstract}

\section{Introduction}
\label{intro}
We consider the following dimensionless nonlinear space fractional Klein-Gordon equation (NSFKGE),
%\cite{Macias2017},
\begin{equation}\label{eq1.1}
\left\{
\begin{aligned}&\partial_{t t} \psi(\textbf{x}, t)+(-\Delta)^{\frac{\alpha}{2}} \psi(\textbf{x}, t)+\beta \psi(\textbf{x}, t)+\varepsilon^{2} \psi^{3}(\textbf{x}, t)=0,  \quad\textbf{x} \in \Omega, \quad t>0,  \\
 &\psi(\textbf{x}, 0)=\psi_{0}(\textbf{x}), \quad \partial_{t} \psi(\textbf{x}, 0)=\psi_{1}(\textbf{x}),  \quad\textbf{x} \in \Omega,\\
\end{aligned}
\right.
\end{equation}
with periodic boundary equations. $\psi:=\psi(\textbf{x}, t)$ is a real-valued function, $\textbf{x}$ is the spatial coordinate, $t$ is time, $\beta>0$. The nonlinearity strength is described by the dimensionless parameter $\varepsilon\in(0,1]$. $\Omega=\Pi_{i=1}^{d}(a_{i},b_{i})\subset \mathbb{R}^{d}(d=1,2,3)$ is a a compact domain imposed periodic boundary condition. $(-\Delta)^{\frac{\alpha}{2}}(1<\alpha\leq2)$ is the space fractional Laplace operator, which is defined by \cite{Ainsworth2017,Huang2014},
\begin{equation*}\label{eq1.2}
\begin{aligned}
 -(-\Delta)^{\frac{\alpha}{2}}\psi(\textbf{x}, t)=-\mathcal{F}^{-1}(|\textbf{$\omega$}|^{\alpha}\widehat{\psi}(\textbf{$\omega$},t)), \quad \widehat{\psi}(\textbf{$\omega$},t)=\mathcal{F}\psi(\textbf{$\omega$}, t),
\end{aligned}
\end{equation*}
where $\textbf{$\omega$}=\left(\omega_{1}, \omega_{2}, \ldots, \omega_{d}\right)$, $\mathcal{F}(\psi)$ denotes the Fourier transform of $\psi, \mathcal{F}^{-1}$ stands for its inverse transform. $\psi_{0}(\textbf{x})$ and $\psi_{1}(\textbf{x})$ are two known real-valued initial functions with no dependence on $\varepsilon$.
When $0<\varepsilon\ll1$, the Eq. (\ref{eq1.1}) with $O(1)$ initial data and $O(\varepsilon^2)$ nonlinearity can be reconverted into the following equivalent NSFKGE with $O(\varepsilon)$ initial data and $O(1)$ nonlinearity by introducing $z(\textbf{x}, t)=\varepsilon \psi(\textbf{x}, t)$,
\begin{equation}\label{eq1.4}
\left\{
\begin{aligned}&\partial_{t t} z(\textbf{x}, t)+(-\Delta)^{\frac{\alpha}{2}} z(\textbf{x}, t)+\beta z(\textbf{x}, t)+z^{3}(\textbf{x}, t)=0, \quad \textbf{x} \in \Omega, \quad t>0, \\
 &z(\textbf{x}, 0)=\varepsilon \psi_{0}(\textbf{x}), \quad \partial_{t} z(\textbf{x}, 0)=\varepsilon \psi_{1}(\textbf{x}), \quad\textbf{x} \in \Omega.
\end{aligned}
\right.
\end{equation}

In particular, when $\alpha=2, \beta=1$, Eq. \eqref{eq1.1} simplifies to the classical nonlinear Klein-Gordon equation, which was first proposed in 1927 by physicists Oskar Klein and Walter Gordon \cite{Kumar2017}. The equation describing the motion of spinless particles, widely used in particle physics, quantum electrodynamics, quantum mechanics and special relativity, etc., see \cite{Bao2014,Duncan1997} and references therein.

For the classical nonlinear Klein-Gordon equation (NKGE), it has been extensively studied numerically \cite{Bao2019,Cao1993,Deeba1996,Dehghan2009,Lindblad2020,Strauss1978}. Efficient and robust exponential-type integrators for NKGEs are introduced in \cite{Bau2018}. Cai and Zhou \cite{Cai2022} developed a class of uniformly accurate nested Picard iterative integrator (NPI) Fourier pseudospectral methods for the NKGE in the nonrelativistic regime. Calvo and Schratz \cite{Calvo2022} proposed  a  novel  class  of  uniformly  accurate  integrators  for  the  NKGE  which  capture  classical as  well  as  highly  oscillatory  nonrelativistic  regimes. Recently, widespread concern has been raised by the long-time dynamics of the NKGE with weak nonlinearity. Long-time error bounds for the numerical methods have been strictly established \cite{Bao2022,Feng2021a}. The improved uniform error bounds for the NKGE have been established with the aid of the \textbf{regularity compensation oscillation} (RCO)  technique \cite{Bao2021b}. %For more numerical methods of solving the NKGE, we can see \cite{Cao1993,Deeba1996,Dehghan2009,Lindblad2020,Strauss1978} and their references.

Fractional operator can successfully describe various processes and materials with memory, nonlocality and genetic characteristics. It has become a powerful tool for mathematical modeling of complex material science and physical processes. Fractional partial differential equations (FPDEs) can be used to describe anomalous diffusion phenomena, long-range interaction and memory effect, see \cite{Benson2000,Jia2021,Magin2006,Metzler2000,Podlubny1999,Sun2018} and references therein. And many effective numerical methods have been developed for solving the FPDEs. For the 2D space fractional nonlinear reaction-diffusion equation, Zeng et al. \cite{Zeng2014} developed a new ADI Galerkin-Legendre spectral method. A semi-implicit time-stepping, second-order stabilized Fourier spectral approach was developed by Zhang et al \cite{Zhang2019}. For the highly oscillating space fractional nonlinear Schr\"{o}dinger equation, Feng investigated an improved error estimate \cite{Feng2021c}. For more numerical methods of solving the FPDEs, we can refer to the references in the above articles.

In \cite{Hendy2022} and references therein presented the physical motivations for considering the NSFKGE. To our knowledge, there are very few works in the literature on the improved uniform error bounds for long-time dynamics of the NSFKGE. In this paper, we established the improved uniform error bounds for long-time dynamics of the NSFKGE with weak nonlinearity by using the RCO tecnique. The main innovation of our paper is the RCO technique is first applied to prove the long-time dynamics of the NSFKGE, which improves the error convergence order in time direction to $O(\varepsilon^2\tau^2)$. In the spatial direction, the Fourier pseudospectral method is used to make the spatial accuracy reach the spectral accuracy. The one-dimensional (1D) and two-dimensional (2D) numerical experiments indicate that our numerical method is effective.

The rest of the paper is organized as follows. In Section 2, we give the fully discrete numerical scheme to the NSFKGE and rigorous convergence results are shown in Section 3. Complex NSFKGE and oscillatory complex NSFKGE with nonlinear terms of general power exponents are discussed in Section 4. Numerical experiments are provided in Section 5 to confirm our error estimates and demonstrate the potency of the numerical method. Finally, conclusions are given in Section 6. In this paper, the notation $A \lesssim B$ is used to denote $|A| \leq C B$, where $C>0$ is a general constant independent of the space step $h$ and time step $\tau$ as well as $\varepsilon$.

\section{Numerical method}
\label{sec:2}
For convenience, we only consider the 1D case, i.e., $\Omega=(a,b)$. Then, Eq. (\ref{eq1.1}) becomes
\begin{equation}\label{eq2.1}
\left\{
\begin{aligned}&\partial_{t t} \psi(x, t)+(-\Delta)^{\frac{\alpha}{2}} \psi(x, t)+\beta \psi(x, t)+\varepsilon^{2} \psi^{3}(x, t)=0, \quad  x \in \Omega, \quad t>0, \\
&\psi(a, t)=\psi(b, t), \quad \partial_{x}\psi(a,t)=\partial_{x}\psi(b,t),  \quad t\geq 0, \\
&\psi(x, 0)=\psi_{0}(x), \quad \partial_{t} \psi(x, 0)=\psi_{1}(x), \quad x \in \overline{\Omega}.
\end{aligned}
\right.
\end{equation}

\subsection{Preliminary}
\label{sec:2.1}
Let $\tau>0$ be the time step and $h=(b-a)/N$ be the space step, where $N$ is an even positive integer. The grid points are denoted as
\begin{center}
$t_{n}:=n\tau, \quad n=0,1,2,\cdots; \quad x_{j}:=a+jh, \quad j=0,1,2,\cdots,N$.
\end{center}
$X_{N}:=\{\psi=(\psi_{0},\psi_{1},\ldots, \psi_{N})^{T}\in\mathbb{C}^{N+1}|\psi_{0}=\psi_{N}\}, \quad C_{per}(\Omega)=\{\psi\in C(\overline{\Omega})|\psi(a)=\psi(b)\}$, and
\begin{equation*}
\begin{aligned}
 &Y_{N}:=\textrm{span}\left\{\textrm{e}^{i\mu_{l}(x-a)}, x\in \overline{\Omega}, \mu_{l}=\frac{2\pi l}{b-a}, l \in {T}_N\right\}, \\
 &{T}_N=\left\{l \mid l=-\frac{N}{2},-\frac{N}{2}+1, \ldots, \frac{N}{2}-1\right\}.
\end{aligned}
\end{equation*}
$P_{N}: L^{2}(\Omega)\rightarrow Y_{N}$ is the standard $L^{2}$-projection operator, $I_{N}:C_{per}(\Omega)\rightarrow Y_{N}$ or $I_{N}:X_{N}\rightarrow Y_{N}$ is the trigonometric interpolation operator, i.e.,
\begin{equation}\label{eq2.1.1}
P_{N}\psi(x)=\sum_{l \in  {T}_N}\widehat{\psi}_{l}\textrm{e}^{i\mu_{l}(x-a)}, \quad I_{N}\psi(x)=\sum_{l \in {T}_N}\tilde{\psi}_{l}\textrm{e}^{i\mu_{l}(x-a)}, \quad x\in \overline{\Omega},
\end{equation}
where
\begin{equation}\label{eq2.1.2}
\widehat{\psi}_{l}=\frac{1}{b-a}\int_{a}^{b}\psi(x)\textrm{e}^{-i\mu_{l}(x-a)}\textrm{d}x,\quad \tilde{\psi}_{l}=\frac{1}{N}\sum_{j=0}^{N-1}\psi_{j}\textrm{e}^{-i\mu_{l}(x_{j}-a)},
\end{equation}
with $\psi_{j}$ defined as $\psi(x_j)$. $P_{N}$ and $I_{N}$ are identity operators on $X_{N}$.

For $m\geq 0$,  $H^{m}(\Omega)$ is the space of functions $\psi(x)\in L^{2}(\Omega)$, the $H^{m}-$norm $\|\cdot\|_{m}$ is defined by
\begin{equation}\label{eq2.1.3}
\|\psi\|_{m}^{2}=\sum_{l\in\mathbb{Z}}(1+\mu_{l}^{2})^{m}|\widehat{\psi_{l}}|^{2}, \quad \textrm{for}\quad \psi(x)=\sum_{l\in\mathbb{Z}}\widehat{\psi}_{l}\textrm{e}^{i\mu_{l}(x-a)},
\end{equation}
where $\widehat{\psi}_{l}(l\in \mathbb{Z})$ are the Fourier coefficients of the function $\psi(x)$. Note that $H^{0}(\Omega)=L^{2}(\Omega)$, we use $\|\cdot\|$ to denote the $L^{2}-$norm $\|\cdot\|_{0}$.

Define the operator $\langle\nabla\rangle_{\alpha}=\sqrt{\beta+(-\Delta)^{\frac{\alpha}{2}}}$ ,
\begin{equation*}
\langle\nabla\rangle_{\alpha}\psi(x)=\sum_{l\in\mathbb{Z}}\sqrt{\beta+|\mu_{l}|^{\alpha}}\widehat{\psi}_{l}\textrm{e}^{i\mu_{l}(x-a)},
\end{equation*}
and the inverse operator $\langle\nabla\rangle_{\alpha}^{-1}$ as
\begin{equation*}
\langle\nabla\rangle_{\alpha}^{-1}\psi(x)=\sum_{l\in\mathbb{Z}}\frac{\widehat{\psi}_{l}}{\sqrt{\beta+|\mu_{l}|^{\alpha}}}\textrm{e}^{i\mu_{k}(x-a)},
\end{equation*}
which leads to
\begin{equation*}
\|\langle\nabla\rangle_{\alpha}^{-1}\psi\|_{s}\lesssim \|\psi\|_{s-\alpha/2}\lesssim  \|\psi\|_{s},\quad  \|\langle\nabla\rangle_{\alpha}\psi\|_{s}\lesssim \|\psi\|_{s+\alpha/2}\lesssim  \|\psi\|_{s+1}.
\end{equation*}

Introducing $\eta(x, t)=\partial_{t} \psi(x, t)$ and
$\varphi(x,t)=\psi(x,t)-i\langle\nabla\rangle_{\alpha}^{-1}\eta(x,t)$, then Eq. (\ref{eq2.1}) is equivalent to the the following relativistic nonlinear space fractional Schr\"{o}dinger equation(NLSFSE),
\begin{equation}\label{eq2.1.4}
\begin{cases}i\partial_{t}\varphi(x,t)+\langle\nabla\rangle_{\alpha}\varphi(x,t)+\frac{\varepsilon^{2}}{8}\langle\nabla\rangle_{\alpha}^{-1} (\varphi+\overline{\varphi})^{3}(x, t)=0, & x \in \Omega, \quad t>0,  \\
\varphi(a,t)=\varphi(b,t), & t\geq 0, \\
 \varphi(x, 0)=\varphi_{0}(x):=\psi_{0}(x)-i\langle\nabla\rangle_{\alpha}^{-1}\psi_{1}(x),  & x \in \overline{\Omega}.\end{cases}
\end{equation}
where $\overline{\varphi}$ is the complex conjugate of $\varphi$. The solution of Eq. (\ref{eq2.1}) is
\begin{equation}\label{eq2.1.5}
\psi(x,t)=\frac{1}{2}(\varphi(x,t)+\overline{\varphi}(x,t)), \quad \eta(x,t)=\frac{i}{2}\langle\nabla\rangle_{\alpha}(\varphi(x,t)-\overline{\varphi}(x,t)).
\end{equation}

\subsection{The time discrete scheme}
\label{sec:2.2}
The exact solution to (\ref{eq2.1.4}) can be obtained by using the variation-of-constants formula,
\begin{equation}\label{eq2.2.1}
\varphi\left(t_n+\tau\right)=\textrm{e}^{i \tau\langle\nabla\rangle_{\alpha}} \varphi\left(t_n\right)+\varepsilon^2 \int_0^\tau \textrm{e}^{i(\tau-s)\langle\nabla\rangle_{\alpha}}G\left(\varphi\left(t_n+s\right)\right) \textrm{d} s,
\end{equation}
where the nonlinear operator $G$ has the following expression
\begin{equation}\label{eq2.2.2}
G(\varphi)=i\langle\nabla\rangle_{\alpha}^{-1} g(\varphi), \quad g(\varphi)=\frac{1}{8}(\varphi+\bar{\varphi})^3,
\end{equation}
treat the integral term by the second order Deuflhard scheme (the trapezoidal quadrature)  \cite{Dong2014},
\begin{equation}\label{eq2.2.3}
\varphi\left(t_n+\tau\right)\approx \textrm{e}^{i \tau\langle\nabla\rangle_{\alpha}} \varphi\left(t_n\right)+\varepsilon^2\cdot \frac{\tau}{2}\left[G\left(\varphi\left(t_n+\tau\right)\right)+ \textrm{e}^{i\tau\langle\nabla\rangle_{\alpha}}G\left(\varphi\left(t_n\right)\right)\right].
\end{equation}
Then the time discrete scheme for Eq. (\ref{eq2.1.4}) is
\begin{equation}\label{eq2.2.4}
\varphi^{[n+1]}= \textrm{e}^{i \tau\langle\nabla\rangle_{\alpha}} \varphi^{[n]}+\varepsilon^2\cdot \frac{\tau}{2}\left[G\left(\varphi^{[n+1]}\right)+ \textrm{e}^{i\tau\langle\nabla\rangle_{\alpha}}G\left(\varphi^{[n]}\right)\right].
\end{equation}
with $\varphi^{[0]}=\varphi_0=\psi_0-i\langle\nabla\rangle_{\alpha}^{-1} \psi_1$, where $\varphi^{[n]}:=\varphi^{[n]}(x)\approx\varphi\left(x, t_n\right)$. Hence, the time discrete scheme for Eq. (\ref{eq2.1}) is
\begin{equation}\label{eq2.2.5}
\psi^{[n+1]}=\frac{1}{2}\left(\varphi^{[n+1]}+\overline{\varphi^{[n+1]}}\right), \quad \eta^{[n+1]}=\frac{i}{2}\langle\nabla\rangle_{\alpha}\left(\varphi^{[n+1]}-\overline{\varphi^{[n+1]}}\right), \quad n=0,1, \ldots,
\end{equation}
with $\psi^{[0]}=\psi_{0}(x)$ and $\eta^{[0]}=\psi_{1}(x)$.
%where $\psi^{[n]}:=\psi^{[n]}(x)\approx \psi\left(x, t_n\right)$ and $\eta^{[n]}:=\eta^{[n]}(x)\approx \partial_t \psi\left(x, t_n\right)$.

\subsection{The fully discrete scheme}
\label{sec:2.3}
Denote $\varphi_{j}^{n}\approx\varphi(x_{j}, t_{n})$, $n=0,1, \ldots, j=0,1,\cdots, N$. Then,
the fully discrete scheme for (\ref{eq2.1.4}) is
\begin{equation}\label{eq2.3.1}
\varphi_{j}^{n+1}=\sum_{l \in  {T}_N}\textrm{e}^{i \tau\delta_{l}}\widetilde{\varphi}_{l}^{n}\textrm{e}^{i\mu_{l}(x_{j}-a)}+\frac{\varepsilon^2\tau}{2} \left[G_{j}^{n+1}+ \sum_{l \in  {T}_N}\textrm{e}^{i \tau\delta_{l}}\widetilde{G}_{l}^{n} \textrm{e}^{i\mu_{l}(x_{j}-a)} \right].
\end{equation}
with $\varphi_{j}^{0}=\psi_0(x_{j})-i\sum_{l \in {T}_N}\frac{\widetilde{(\psi_{1})}_{l}}{\delta_{l}}\textrm{e}^{i\mu_{l}(x_{j}-a)}$, where
\begin{equation*}
\delta_{l}=\sqrt{\beta+|\mu_{l}|^{\alpha}},\quad G_{j}^{n+1}=i\sum_{l \in {T}_N}\frac{1}{\delta_{l}}\widetilde{(g(\psi^{n+1}))}_{l}\textrm{e}^{i\mu_{l}(x_{j}-a)}=\sum_{l \in {T}_N}\widetilde{G}_{l}^{n+1}\textrm{e}^{i\mu_{l}(x_{j}-a)}.
\end{equation*}
Thus, the fully discrete scheme for (\ref{eq2.1}) is
\begin{equation}\label{eq2.3.2}
\psi_{j}^{n+1}=\frac{1}{2}\left(\varphi_{j}^{n+1}+\overline{\varphi_{j}^{n+1}}\right), \quad \eta_{j}^{n+1}=\frac{i}{2}\sum_{l \in  {T}_N}\delta_{l}\left[ \widetilde{\varphi}_{l}^{n+1}- \widetilde{\overline{\varphi}}_{l}^{n+1} \right]\textrm{e}^{i\mu_{l}(x_{j}-a)},
\end{equation}
for $n=0,1, \ldots$, with $\psi_{j}^{0}=\psi_{0}(x_{j})$ and $\eta_{j}^{0}=\psi_{1}(x_{j})$.

\section{Convergence analysis of the numerical method}
\label{sec:3}

First, we suppose the exact solution $\psi$ satisfy the following assumptions up to the time at $T_{\varepsilon}=T/{\varepsilon^2}(T>0)$,

(A)\quad $\|\psi\|_{C^{2}([0,T_{\varepsilon}]; H^{m+\alpha/2})}\lesssim 1, \quad \|\partial_{t}\psi\|_{C^{2}([0,T_{\varepsilon}]; H^{m})}\lesssim 1, \quad m>1.$

Next, we show the key error estimates listed below.

\begin{thm}\label{thm3.1}
Under the assumption (A), $0<\tau_{0}< 1$ is small enough and independent of $\varepsilon$ such that, when $0<\tau <\min\left\{1, \gamma \frac{\pi(b-a)^{\alpha/2} \tau_{0}^{\alpha/2}}{2 \sqrt{\beta\tau_{0}^{\alpha}(b-a)^{\alpha}+2^{\alpha} \pi^{\alpha}\left(1+\tau_{0}\right)^{\alpha}}}\right\}$, where $\gamma\in(0,1)$ is a fixed constant, the following improved uniform error bound can be obtained,
\begin{equation}\label{eq2.15}
\|\psi(\cdot, t_{n})-\psi^{[n]}\|_{\alpha/2}+\|\partial_{t}\psi(\cdot, t_{n})-\eta^{[n]}\|\lesssim \varepsilon^2\tau^2+\tau_{0}^{m+\alpha/2}, \quad 0\leq n\leq T_{\varepsilon}/{\tau}.
\end{equation}

In particular, if the exact solution is sufficiently smooth, the last term $\tau_{0}^{m+\alpha/2}$ can be ignored for small enough $\tau_{0}$, then the improved uniform error bound for sufficiently small $\tau$ is
\begin{equation}\label{eq2.16}
\|\psi(\cdot, t_{n})-\psi^{[n]}\|_{\alpha/2}+\|\partial_{t}\psi(\cdot, t_{n})-\eta^{[n]}\|\lesssim \varepsilon^2\tau^2, \quad 0\leq n\leq T_{\varepsilon}/{\tau}.
\end{equation}
\end{thm}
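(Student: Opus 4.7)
\medskip

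\noindent\textbf{Proof proposal.} My plan is to work throughout with the equivalent first-order relativistic Schr\"odinger formulation \eqref{eq2.1.4}, since the EWI discretization \eqref{eq2.2.4} is most naturally analyzed there, and the $\alpha/2$-norm of $\psi-\psi^{[n]}$ together with the $L^2$-norm of $\partial_t\psi-\eta^{[n]}$ is controlled by $\|\varphi(\cdot,t_n)-\varphi^{[n]}\|_{\alpha/2}$ via \eqref{eq2.1.5} and the mapping properties of $\langle\nabla\rangle_{\alpha}$. Subtracting \eqref{eq2.2.4} from the Duhamel identity \eqref{eq2.2.1}, I obtain an error recursion of the form
\begin{equation*}
e^{[n+1]}=\textrm{e}^{i\tau\langle\nabla\rangle_{\alpha}}e^{[n]}+\tfrac{\varepsilon^{2}\tau}{2}\bigl[G(\varphi(t_{n+1}))-G(\varphi^{[n+1]})+\textrm{e}^{i\tau\langle\nabla\rangle_{\alpha}}(G(\varphi(t_n))-G(\varphi^{[n]}))\bigr]+\xi^{[n+1]},
\end{equation*}
where $\xi^{[n+1]}$ is the local truncation error of the trapezoidal quadrature applied to $\varepsilon^{2}\textrm{e}^{i(\tau-s)\langle\nabla\rangle_{\alpha}}G(\varphi(t_n+s))$. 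A direct Taylor expansion together with assumption (A) shows $\|\xi^{[n+1]}\|_{\alpha/2}\lesssim \varepsilon^{2}\tau^{3}$, which however yields only $O(\tau^{2})$ after $T/(\varepsilon^{2}\tau)$ steps and hence falls short of \eqref{eq2.15} by an $\varepsilon^{2}$ factor; recovering this factor is exactly what the RCO technique must achieve.

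The next step is to introduce the RCO decomposition. Fixing a cutoff scale $\tau_{0}$ and the projector $P_{N_0}$ onto frequencies with $|\mu_l|\le\pi/\tau_{0}^{\alpha/2}$ (this is the role of the explicit constants in the $\tau$-restriction of the statement), I split
\begin{equation*}
\varphi(\cdot,t_n)=P_{N_0}\varphi(\cdot,t_n)+(I-P_{N_0})\varphi(\cdot,t_n),
\end{equation*}
and correspondingly analyze low- and high-frequency contributions to $\xi^{[n+1]}$. For the high-frequency part, the regularity assumption (A) gives $\|(I-P_{N_0})\varphi\|_{\alpha/2}\lesssim \tau_{0}^{m+\alpha/2}$ uniformly in $t$, which directly accounts for the second term on the right-hand side of \eqref{eq2.15}. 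For the low-frequency part, I expand the cubic nonlinearity $g(\varphi)=\frac{1}{8}(\varphi+\bar\varphi)^{3}$ in the Fourier basis so that the sum inside the local error is a superposition of oscillating phases $\textrm{e}^{i\tau(\pm\delta_{l_1}\pm\delta_{l_2}\pm\delta_{l_3}-\delta_l)}$, indexed by $(l,l_1,l_2,l_3)\in T_{N_0}^4$. Summation by parts in $n$ then turns each time sum of such phases into a geometric series whose denominator is $1-\textrm{e}^{i\tau(\pm\delta_{l_1}\pm\delta_{l_2}\pm\delta_{l_3}-\delta_l)}$; this is precisely the quantity that the $\tau$-restriction is chosen to keep bounded away from zero (non-resonance), and the estimate on $\delta_l\lesssim (\beta+|\mu_l|^\alpha)^{1/2}$ on low frequencies shows that the phase never crosses $2\pi\mathbb{Z}$ under the stated bound. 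The upshot is a gain of one extra $\tau$ in the accumulated local error and, more importantly, a preserved $\varepsilon^{2}$ prefactor coming from the $\varepsilon^{2}G$ structure, so the low-frequency contribution to the accumulated defect is $O(\varepsilon^{2}\tau^{2})$ rather than $O(\tau)$.

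Having controlled the local defect by $O(\varepsilon^{2}\tau^{3}+\tau_{0}^{m+\alpha/2}\cdot\tau)$ in an accumulated sense, I close the argument by a discrete Gronwall / bootstrap step applied to $\|e^{[n]}\|_{\alpha/2}$. The isometry of $\textrm{e}^{i\tau\langle\nabla\rangle_{\alpha}}$ in $H^{\alpha/2}$ makes the linear propagation harmless, and the Lipschitz estimate $\|G(\varphi_1)-G(\varphi_2)\|_{\alpha/2}\lesssim \|\varphi_1-\varphi_2\|_{\alpha/2}$ (uniform under a priori $H^{\alpha/2}$-boundedness of the numerical solution, for $\alpha/2>d/2$ guaranteed by $m>1$ and $d=1$) produces a factor $\varepsilon^{2}\tau$ multiplying $\|e^{[n]}\|_{\alpha/2}$ in each step, yielding after $O(1/(\varepsilon^{2}\tau))$ iterations a Gronwall constant of order $\textrm{e}^{T}$ that is uniform in $\varepsilon$ and $\tau$. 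The a priori boundedness is itself maintained by induction, using the already-established error bound one step back.

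The main obstacle, as is typical for RCO-type arguments, will be the careful bookkeeping of the low-frequency phase sums: I must verify that for the fractional dispersion symbol $\delta_l=\sqrt{\beta+|\mu_l|^{\alpha}}$ (rather than the usual $\sqrt{1+|\mu_l|^{2}}$ of the classical KGE), the denominator $|\textrm{e}^{i\tau(\pm\delta_{l_1}\pm\delta_{l_2}\pm\delta_{l_3}-\delta_l)}-1|$ admits a lower bound depending only on $\tau_0$, uniformly in the admissible low-frequency quadruples, and that this lower bound is consistent with the constant in the $\tau$-restriction of the theorem. Once this non-resonance estimate is in hand, the combination of high-frequency regularity control and low-frequency oscillation cancellation delivers exactly \eqref{eq2.15}; dropping the $\tau_{0}^{m+\alpha/2}$ remainder for arbitrarily smooth data gives \eqref{eq2.16}.
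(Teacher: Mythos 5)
Your overall architecture coincides with the paper's: the same reformulation as the relativistic NLSFSE \eqref{eq2.1.4}, the same error recursion splitting into a Lipschitz term and a trapezoidal local error, the same frequency cutoff at scale $1/\tau_{0}$ with the high modes absorbed into $\tau_{0}^{m+\alpha/2}$ by regularity, the same non-resonance condition on $\tau$ to bound the oscillatory phase sums, and the same discrete Gronwall closing. So this is not a different route; the question is whether the one step that actually produces the improvement is in place.

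It is not, quite. You correctly note that accumulating local errors of size $\varepsilon^{2}\tau^{3}$ over $T/(\varepsilon^{2}\tau)$ steps yields only $O(\tau^{2})$, and then assert that after summation by parts the low-frequency defect is $O(\varepsilon^{2}\tau^{2})$ ``coming from the $\varepsilon^{2}G$ structure'' --- but that is the same prefactor that was already cancelled by the number of steps in your naive computation, so as stated the claim does not follow. The missing ingredient is the twisted variable $\xi(t)=\textrm{e}^{-it\langle\nabla\rangle_{\alpha}}\varphi(t)$, which satisfies $\partial_{t}\xi=\varepsilon^{2}\textrm{e}^{-it\langle\nabla\rangle_{\alpha}}G(\textrm{e}^{it\langle\nabla\rangle_{\alpha}}\xi)$ and hence $\|\xi(t_{k+1})-\xi(t_{k})\|_{m+\alpha/2}\lesssim\varepsilon^{2}\tau$ (this is \eqref{eq3.1.11} in the paper). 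After the Fourier expansion, each local-error term factors as a pure phase $\textrm{e}^{-it_{k}\delta_{l,l_{1},l_{2},l_{3}}}$ times the slowly varying amplitude $c^{k}=\widehat{\xi}_{l_{1}}(t_{k})\widehat{\xi}_{l_{2}}(t_{k})\widehat{\xi}_{l_{3}}(t_{k})$. Abel summation then produces a boundary term $S^{n}c^{n}$, which is $O(\tau^{2})$ with the external $\varepsilon^{2}$ intact because no $k$-sum remains, plus $\sum_{k}S^{k}(c^{k}-c^{k+1})$; it is only because $|c^{k}-c^{k+1}|=O(\varepsilon^{2}\tau)$ that this sum of $n\sim 1/(\varepsilon^{2}\tau)$ terms stays $O(1)$ rather than $O(n)$. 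If you carry the amplitudes of $\varphi$ itself (whose Fourier coefficients oscillate at $O(1)$ rate, so their increments are only $O(\tau)$), summation by parts returns you to $O(\tau^{2})$ and nothing is gained. You therefore need to (i) introduce the twisted variable explicitly, (ii) prove its $O(\varepsilon^{2}\tau)$ increment bound from assumption (A), and (iii) thread that bound through the summation by parts. A secondary, fixable point: the $\tau_{0}^{m+\alpha/2}$ tail must be extracted from the projection error of $G(\varphi)\in H^{m+\alpha}$ (the smoothing of $\langle\nabla\rangle_{\alpha}^{-1}$ supplies the extra $\alpha/2$), not of $\varphi\in H^{m+\alpha/2}$, which would only give $\tau_{0}^{m}$.
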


\begin{thm}\label{thm3.2}
Under the assumption (A), there exist $h_{0}>0$ and $0<\tau_{0}<1$ are small enough and independent of $\varepsilon$ such that, when $0<h \leq h_{0}$ and $0<\tau <$ $\min
\left\{1, \right.\\
\left.\gamma \frac{\pi(b-a)^{\alpha/2} \tau_{0}^{\alpha/2}}{2 \sqrt{\beta\tau_{0}^{\alpha}(b-a)^{\alpha}+2^{\alpha} \pi^{\alpha}\left(1+\tau_{0}\right)^{\alpha}}}\right\}$, where $\gamma \in(0,1)$ is a fixed constant, for any $0<\varepsilon \leq 1$,  we have the following improved uniform error estimate
 \begin{equation}
\label{eq3.5}
\begin{aligned}
\left\|\psi\left(\cdot, t_{n}\right)-I_{N} \psi^{n}\right\|_{\alpha/2}+\left\|\partial_{t} \psi\left(\cdot, t_{n}\right)-I_{N} \eta^{n}\right\|
\lesssim  h^{m}+\varepsilon^{2} \tau^{2}+\tau_{0}^{m+\alpha/2}, \quad 0 \leq n \leq T_{\varepsilon}/{\tau}.
\end{aligned}
\end{equation}
In particular, the last term $\tau_{0}^{m+\alpha/2}$ can be ignored for small enough $\tau_{0}$, then the improved uniform error bound for sufficiently small $\tau$ is
\begin{equation}
\label{eq3.6}
\begin{aligned}
\left\|\psi\left(\cdot, t_{n}\right)-I_{N} \psi^{n}\right\|_{\alpha/2}+\left\|\partial_{t} u\left(\cdot, t_{n}\right)-I_{N} \eta^{n}\right\| \lesssim h^{m}+\varepsilon^{2} \tau^{2}, \quad 0 \leq n \leq T_{\varepsilon}/{\tau}.
\end{aligned}
\end{equation}

\end{thm}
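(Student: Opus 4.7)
My plan is to reduce Theorem \ref{thm3.2} to Theorem \ref{thm3.1} via a triangle inequality that isolates the purely spatial discretization error. Writing
\begin{equation*}
\|\psi(\cdot,t_n)-I_N\psi^n\|_{\alpha/2}\le \|\psi(\cdot,t_n)-\psi^{[n]}\|_{\alpha/2}+\|\psi^{[n]}-I_N\psi^n\|_{\alpha/2},
\end{equation*}
and the analogous bound for $\partial_t\psi-I_N\eta^n$, Theorem \ref{thm3.1} already controls the first summand by $\varepsilon^2\tau^2+\tau_0^{m+\alpha/2}$. It then suffices to prove that the residual spatial error is $\lesssim h^m$ uniformly for $0\le n\tau\le T/\varepsilon^2$. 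Because $\psi^n,\eta^n$ are expressed through $\varphi^n$ by \eqref{eq2.3.2} and $\|\langle\nabla\rangle_\alpha v\|\lesssim \|v\|_{\alpha/2}$ by the preliminaries, this reduces further to showing $\|\varphi^{[n]}-I_N\varphi^n\|_{\alpha/2}\lesssim h^m$.

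To control the spatial error I would introduce the $L^2$-Fourier projection $P_N\varphi^{[n]}$ and split once more,
\begin{equation*}
\|\varphi^{[n]}-I_N\varphi^n\|_{\alpha/2}\le \|\varphi^{[n]}-P_N\varphi^{[n]}\|_{\alpha/2}+\|P_N\varphi^{[n]}-I_N\varphi^n\|_{\alpha/2}.
\end{equation*}
Assumption (A), together with an induction on \eqref{eq2.2.4} to propagate $H^{m+\alpha/2}$-regularity of $\varphi^{[n]}$, bounds the first term by $h^m$. For the discrete difference $e^n:=P_N\varphi^{[n]}-I_N\varphi^n\in Y_N$, subtracting \eqref{eq2.2.4} from \eqref{eq2.3.1} (rewritten in $Y_N$) produces a recursion of the form
\begin{equation*}
e^{n+1}=\textrm{e}^{i\tau\langle\nabla\rangle_\alpha}e^n+\tfrac{\varepsilon^2\tau}{2}\bigl[\mathcal{E}^{n+1}+\textrm{e}^{i\tau\langle\nabla\rangle_\alpha}\mathcal{E}^n\bigr]+\rho^n,
\end{equation*}
where $\mathcal{E}^k$ collects $G(P_N\varphi^{[k]})-G(I_N\varphi^k)$ and $\rho^n$ absorbs the aliasing defect $(I_N-P_N)g(\cdot)$ and the difference between evaluating $g$ on $\varphi^{[k]}$ versus $P_N\varphi^{[k]}$. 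Since $\textrm{e}^{i\tau\langle\nabla\rangle_\alpha}$ is an isometry on $H^s$, $\langle\nabla\rangle_\alpha^{-1}$ is bounded on $H^{\alpha/2}$, and $H^{\alpha/2}$ is a Banach algebra in $d=1$ (as $\alpha/2>1/2$), the nonlinearity satisfies the local Lipschitz estimate $\|g(u)-g(v)\|_{\alpha/2}\lesssim(\|u\|_{\alpha/2}^2+\|v\|_{\alpha/2}^2)\|u-v\|_{\alpha/2}$. Standard aliasing bounds give $\|\rho^n\|_{\alpha/2}\lesssim \varepsilon^2\tau\, h^m$, so
\begin{equation*}
\|e^{n+1}\|_{\alpha/2}\le \|e^n\|_{\alpha/2}+C\varepsilon^2\tau\bigl(\|e^n\|_{\alpha/2}+\|e^{n+1}\|_{\alpha/2}\bigr)+C\varepsilon^2\tau\, h^m.
\end{equation*}
For $\tau$ small the implicit $\|e^{n+1}\|_{\alpha/2}$ term is absorbed, and a discrete Gronwall argument yields $\|e^n\|_{\alpha/2}\lesssim h^m\textrm{e}^{C\varepsilon^2 t_n}\lesssim h^m$ for $t_n\le T/\varepsilon^2$, the crucial point being that the $\varepsilon^2$ prefactor exactly compensates the long-time scale $T_\varepsilon=T/\varepsilon^2$ to produce an $\varepsilon$-independent constant.

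The main obstacle, as in the semi-discrete proof, is closing the induction that supplies the a priori $H^{\alpha/2}$-bound on $I_N\varphi^n$ needed to invoke the Lipschitz estimate for $g$. This bootstrap requires $\|I_N\varphi^n\|_{\alpha/2}$ to stay inside a neighbourhood of $\|\varphi(\cdot,t_n)\|_{\alpha/2}$, and it is precisely what forces the stepsize condition on $\tau$ and the mesh condition $h\le h_0$ in the statement: these thresholds are what allow the RCO decomposition inherited from Theorem \ref{thm3.1} to furnish an $\varepsilon$-independent constant when combined with the spatial Gronwall estimate. Assembling the three contributions delivers the bound $h^m+\varepsilon^2\tau^2+\tau_0^{m+\alpha/2}$, and \eqref{eq3.6} follows from \eqref{eq3.5} by letting $\tau_0\to 0$ under the extra smoothness, exactly as in Theorem \ref{thm3.1}.
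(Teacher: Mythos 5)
Your proposal follows essentially the same route as the paper: the same three-term decomposition $\varphi(\cdot,t_n)-I_N\varphi^n=(\varphi(\cdot,t_n)-\varphi^{[n]})+(\varphi^{[n]}-P_N\varphi^{[n]})+(P_N\varphi^{[n]}-I_N\varphi^n)$, with Theorem \ref{thm3.1} controlling the first piece, projection/regularity controlling the second, and a recursion for $e^n:=P_N\varphi^{[n]}-I_N\varphi^n$ closed by the local Lipschitz property of $g$, an $O(\varepsilon^2\tau h^m)$ aliasing defect, and discrete Gronwall over $n\varepsilon^2\tau\le T$. The argument is correct and matches the paper's proof, with your explicit mention of the Banach-algebra property and the a priori bootstrap merely making explicit what the paper leaves implicit.
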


\subsection{Proof of Theorem \ref{thm3.1}}
\label{sec:3.1}
For small enough $0<\tau\leq \tau_{c}$ ($\tau_{c}$ is a constant), under the assumption $(A)$, there exists a constant $K > 0$ depending on $T$, $\|\psi_{0}\|_{H^{m+\alpha/2}}$, $\|\psi_{1}\|_{H^{m}}$, $\|\psi\|_{C^{2}([0,T_{\varepsilon}]; H^{m+\alpha/2})}$ and $\|\partial_{t}\psi\|_{C^{2}([0,T_{\varepsilon}]; H^{m})}$ such that
\begin{equation}\label{eq3.1.1}
\|\psi^{[n]}\|^{2}_{m+\alpha/2}+\|\eta^{[n]}\|^{2}_{m}\leq K, i.e.,  \|\varphi^{[n]}\|^{2}_{m+\alpha/2}\leq K, \quad 0\leq n\leq T_{\varepsilon}/{\tau}.
\end{equation}
See the Appendix for the specific proof of Eq. \eqref{eq3.1.1}.

Denote the numerical error function $e^{[n]}:=e^{[n]}(x)(n=0,1,\ldots, T_{\varepsilon}/{\tau})$ by
\begin{equation}\label{eq3.1.2}
e^{[n]}:=\varphi^{[n]}-\varphi(t_{n}).
\end{equation}
Subtract (\ref{eq2.2.1}) from (\ref{eq2.2.4}), we get the following error equation,
\begin{equation}\label{eq3.1.3}
\begin{aligned}
e^{[n+1]} =\textrm{e}^{i\tau\langle\nabla\rangle_{\alpha}}e^{[n]}+Q^{n}+\mathcal{E}^{n},\quad 0\leq n\leq T_{\varepsilon}/\tau-1,
\end{aligned}
\end{equation}
where
\begin{equation}\label{eq3.1.4}
\begin{aligned}
Q^{n}=\frac{\varepsilon^2\tau}{2}\left[G\left(\varphi^{[n+1]}\right)+ \textrm{e}^{i\tau\langle\nabla\rangle_{\alpha}}G\left(\varphi^{[n]}\right)\right]-\left[G\left(\varphi\left(t_{n+1}\right)\right)+ \textrm{e}^{i\tau\langle\nabla\rangle_{\alpha}}G\left(\varphi\left(t_n\right)\right)\right],
\end{aligned}
\end{equation}

\begin{equation}\label{eq3.1.5}
\begin{aligned}
\mathcal{E}^{n}=\frac{\varepsilon^2\tau}{2}\left[G\left(\varphi\left(t_{n+1}\right)\right)+ \textrm{e}^{i\tau\langle\nabla\rangle_{\alpha}}G\left(\varphi\left(t_n\right)\right)\right]-\varepsilon^2\int_0^\tau \textrm{e}^{i(\tau-s)\langle\nabla\rangle_{\alpha}} G\left(\psi\left(t_n+s\right)\right) \textrm{d} s.
\end{aligned}
\end{equation}
Then,
\begin{equation}\label{eq3.1.6}
\begin{split}
e^{[n+1]}= \textrm{e}^{i(n+1)\tau\langle\nabla\rangle_{\alpha}}e^{[0]}+\sum_{k=0}^{n}\textrm{e}^{i(n-k)
\tau\langle\nabla\rangle_{\alpha}}(Q^{k}+\mathcal{E}^{k})
=\sum_{k=0}^{n}\textrm{e}^{i(n-k)
\tau\langle\nabla\rangle_{\alpha}}(Q^{k}+\mathcal{E}^{k}).
\end{split}
\end{equation}
Under the assumption $(A)$, by \eqref{eq2.2.2} and \eqref{eq3.1.1}, we have
\begin{equation}\label{eq3.1.7}
\begin{aligned}
\|Q^{n}\|_{\alpha/2}\lesssim \varepsilon^2\tau(\|e^{[n+1]}\|_{\alpha/2}+\|e^{[n]}\|_{\alpha/2}).
\end{aligned}
\end{equation}
Thus, when $\tau$ is sufficiently small and $0<\tau<1/\varepsilon^{2}$,
\begin{equation}\label{eq3.1.8}
\begin{aligned}
\|e^{[n+1]}\|_{\alpha/2}\lesssim \varepsilon^2\tau\sum_{k=0}^{n}\|e^{[k]}\|_{\alpha/2}+\|\sum_{k=0}^{n}\textrm{e}^{i(n-k)
\tau\langle\nabla\rangle_{\alpha}}\mathcal{E}^{k}\|_{\alpha/2}.
\end{aligned}
\end{equation}
Next, the RCO technique \cite{Bao2021b} will be employed to analysis the last term on the right hand side (RHS) of (\ref{eq3.1.8}) and gain the order $O(\varepsilon^2)$.

Since $\textrm{e}^{i\tau\langle\nabla\rangle_{\alpha}}$ preserves the $H^{m}-$norm$(m>0)$, multiplying the last term in (\ref{eq3.1.8}) by $\textrm{e}^{i(n-1)\tau\langle\nabla\rangle_{\alpha}}$, then the last term becomes
\begin{equation}\label{eq3.1.9}
\|\sum_{k=0}^{n}\textrm{e}^{-i(k+1)
\tau\langle\nabla\rangle_{\alpha}}\mathcal{E}^{k}\|_{\alpha/2}.
\end{equation}
From \eqref{eq2.1.4}, we find that $\partial_{t}\varphi(x,t)-i\langle\nabla\rangle_{\alpha}\varphi(x,t)=\varepsilon^2G(\varphi(x,t))=O(\varepsilon^2)$, consider the `twisted variable' as
\begin{equation}
\label{eq3.1.10}
\xi(x,t)= \textrm{e}^{-it\langle\nabla\rangle_{\alpha}}\varphi(x,t),\quad t\geq 0,
\end{equation}
which satisfies the equation $\partial_{t}\xi(x,t)=\varepsilon^2\textrm{e}^{-it\langle\nabla\rangle_{\alpha}}G(\textrm{e}^{it\langle\nabla\rangle_{\alpha}}\xi(x,t))$.
According to the assumption (A), we have $\|\xi\|_{C^{2}([0,T_{\varepsilon}]; H^{m+\alpha/2})}\lesssim 1$ and $\|\partial_{t}\xi\|_{C^{2}([0,T_{\varepsilon}]; H^{m+\alpha/2})}\lesssim \varepsilon^2$ with
\begin{equation}
\label{eq3.1.11}
\|\xi(t_{n+1})-\xi(t_{n})\|_{m+\alpha/2}\lesssim \varepsilon^2\tau, \quad 0\leq n\leq T_{\varepsilon}/\tau-1.
\end{equation}

\textbf{Step 1.}\quad Choose $\tau_{0}\in(0,1)(\textbf{Cut-off parameter})$, and let $N_{0}=2\lceil1/\tau_{0}\rceil\in \mathbb{Z}^{+}$ ($\lceil\cdot\rceil$ is the ceiling function) with $1/\tau_{0}\leq N_{0}/2 <1+1/\tau_{0}$, then only those Fourier modes with $-\frac{N_{0}}{2}\leq l\leq \frac{N_{0}}{2}-1$ ($|l|\leq \frac{1}{\tau_{0}}$, low frequency modes) in a spectral projection would be considered. Next, we analysis $\|\mathcal{E}^{k}\|_{\alpha/2}$ carefully.

\begin{equation*}
\begin{aligned}
G(\varphi)=G(\textrm{e}^{it\langle\nabla\rangle_{\alpha}}\xi)
=\frac{i}{8}\langle\nabla\rangle_{\alpha}^{-1}(\textrm{e}^{it\langle\nabla\rangle_{\alpha}}\xi+\textrm{e}^{-it\langle\nabla\rangle_{\alpha}}\overline{\xi})^{3},
\end{aligned}
\end{equation*}
by (A), the properties of operators $\textrm{e}^{it\langle\nabla\rangle_{\alpha}}$ and $\langle\nabla\rangle_{\alpha}^{-1}$, we obtain
\begin{equation}\label{eq3.1.12}
\|G(\textrm{e}^{it_{k+1}\langle\nabla\rangle_{\alpha}}\xi(t_{k+1}))\|_{m+\alpha}\lesssim \|\xi(t_{k+1})\|_{m+\alpha/2}^{3}\lesssim 1,\quad 0\leq k\leq T_{\varepsilon}/{\tau}-1.
\end{equation}
Since
\begin{equation}\label{eq3.1.13}
\begin{aligned}
&G(\varphi(t_{k+1}))
=G(\textrm{e}^{it_{k+1}\langle\nabla\rangle_{\alpha}}\xi(t_{k+1}))\\
=&G(\textrm{e}^{it_{k+1}\langle\nabla\rangle_{\alpha}}\xi(t_{k+1}))-
P_{N_{0}}G(\textrm{e}^{it_{k+1}\langle\nabla\rangle_{\alpha}}\xi(t_{k+1}))\\
&+P_{N_{0}}G(\textrm{e}^{it_{k+1}\langle\nabla\rangle_{\alpha}}\xi(t_{k+1}))-
P_{N_{0}}G(\textrm{e}^{it_{k+1}\langle\nabla\rangle_{\alpha}}(P_{N_{0}}\xi(t_{k+1})))\\
&+P_{N_{0}}G(\textrm{e}^{it_{k+1}\langle\nabla\rangle_{\alpha}}(P_{N_{0}}\xi(t_{k+1}))),
\end{aligned}
\end{equation}
by \eqref{eq3.1.12} and the properties of projection operator, then,
\begin{equation}\label{eq3.1.14}
\begin{aligned}
\|G(\varphi(t_{k+1}))\|_{\alpha/2}\lesssim \tau_{0}^{m+\alpha/2}+\|P_{N_{0}}G(\textrm{e}^{it_{k+1}\langle\nabla\rangle_{\alpha}}(P_{N_{0}}\xi(t_{k+1})))\|_{\alpha/2}.
\end{aligned}
\end{equation}
Make similar analysis for each term in $\|\mathcal{E}^{k}\|_{\alpha/2}$, for $0\leq n \leq T_{\varepsilon}/\tau-1$,
\begin{equation}\label{eq3.1.15}
\begin{aligned}
\|e^{[n+1]}\|_{\alpha/2}\lesssim \tau_{0}^{m+\alpha/2}+ \varepsilon^2\tau\sum_{k=0}^{n}\|e^{[k]}\|_{\alpha/2}+\| {R}^{n}\|_{\alpha/2},
\end{aligned}
\end{equation}
where
\begin{equation}
\label{eq3.1.16}
\begin{aligned}
 {R}^{n}=&\sum_{k=0}^{n}\textrm{e}^{-i(k+1)
\tau\langle\nabla\rangle_{\alpha}}\left\{\frac{\varepsilon^2\tau}{2}\left[P_{N_{0}}G\left(\textrm{e}^{it_{k+1}\langle\nabla\rangle_{\alpha}}\left(P_{N_{0}}\xi(t_{k+1})\right)\right)\right.\right.\\
&+ \left. \left.\textrm{e}^{i\tau\langle\nabla\rangle_{\alpha}}P_{N_{0}}G\left(\textrm{e}^{it_{k}\langle\nabla\rangle_{\alpha}}\left(P_{N_{0}}\xi(t_{k})\right)\right)\right]\right.\\
&\left.-\varepsilon^2\int_0^\tau \textrm{e}^{i(\tau-s)\langle\nabla\rangle_{\alpha}}P_{N_{0}} G\left(\textrm{e}^{i(t_{k}+s)\langle\nabla\rangle_{\alpha}}\left(P_{N_{0}}\xi(t_{k}+s)\right)\right) \textrm{d} s \right\}.
\end{aligned}
\end{equation}

\textbf{Step 2.}\quad Next, we focus on the term $ {R}^{n}$. We present the following decomposition for the nonlinear function $G(\cdot)$,
\begin{equation}
\label{eq3.1.17}
G(\xi)=\sum_{q=1}^{4} G^{q}(\xi), \quad G^{q}(\xi)=i\langle\nabla\rangle^{-1}_{\alpha} g^{q}(\xi), \quad q=1,2,3,4,
\end{equation}
with $g^{1}(\xi)=\frac{1}{8} \xi^{3}, g^{2}(\xi)=\frac{3}{8} \bar{\xi} \xi^{2}, g^{3}(\xi)=\frac{3}{8} \bar{\xi}^{2} \xi,  g^{4}(\xi)=\frac{1}{8} \bar{\xi}^{3}$. Then $ {R}^{n}=\sum_{q=1}^{4} {R}_{q}^{n}$, where
\begin{equation}
\label{eq3.1.18}
\begin{aligned}
 {R}_{q}^{n}=&\sum_{k=0}^{n}\textrm{e}^{-i(k+1)
\tau\langle\nabla\rangle_{\alpha}}\left\{\frac{\varepsilon^2\tau}{2}\left[P_{N_{0}}G^{q}\left(\textrm{e}^{it_{k+1}\langle\nabla\rangle_{\alpha}}\left(P_{N_{0}}\xi(t_{k+1})\right)\right)\right.\right.\\
&+ \left.\left. \textrm{e}^{i\tau\langle\nabla\rangle_{\alpha}}P_{N_{0}}G^{q}\left(\textrm{e}^{it_{k}\langle\nabla\rangle_{\alpha}}\left(P_{N_{0}}\xi(t_{k})\right)\right)\right]\right.\\
&\left.-\varepsilon^2\int_0^\tau \textrm{e}^{i(\tau-s)\langle\nabla\rangle_{\alpha}}P_{N_{0}} G^{q}\left(\textrm{e}^{i(t_{k}+s)\langle\nabla\rangle_{\alpha}}\left(P_{N_{0}}\xi(t_{k}+s)\right)\right) \textrm{d} s \right\},\quad 1 \leq q \leq 4.
\end{aligned}
\end{equation}
Since the analysis of $ {R}_{q}^{n}(q=1,2,3,4)$ are similarly, we only show the case of $ {R}_{1}^{n}$ $\left(0 \leq n \leq T_{\varepsilon} / \tau-1\right)$. Define the index set $ {I}_{l}^{N_{0}}$ as
\begin{equation}
\label{eq3.1.19}
 {I}_{l}^{N_{0}}=\left\{\left(l_{1}, l_{2}, l_{3}\right) \mid l_{1}+l_{2}+l_{3}=l,  l_{1}, l_{2}, l_{3} \in {T}_{N_{0}}\right\}, \quad l \in {T}_{N_{0}}.
\end{equation}
In view of $P_{N_{0}} \xi\left(t_{k}\right)=\sum_{l \in  {T}_{N_{0}}} \widehat{\xi}_{l}\left(t_{k}\right) \textrm{e}^{i \mu_{l}(x-a)}$, the following expansion holds,
\begin{equation}
\label{eq3.1.20}
\begin{aligned}
&\textrm{e}^{-i(k+1)\tau\langle\nabla\rangle_{\alpha}}P_{N_{0}}G^{1}\left(\textrm{e}^{it_{k+1}\langle\nabla\rangle_{\alpha}}\left(P_{N_{0}}\xi(t_{k+1})\right)\right)\\
=&\textrm{e}^{-i(k+1)\tau\langle\nabla\rangle_{\alpha}}P_{N_{0}}\left[ \frac{i\langle\nabla\rangle_{\alpha}^{-1}}{8}\left( \textrm{e}^{t_{k+1}\langle\nabla\rangle_{\alpha}}P_{N_{0}}\xi(t_{k+1})\right)^{3}\right]\\
=&\sum_{l \in  {T}_{N_{0}}} \sum_{\left(l_{1}, l_{2}, l_{3}\right) \in  {I}_{l}^{N_{0}}} \frac{i}{8 \delta_{l}}  {G}_{l, l_{1}, l_{2}, l_{3}}^{k}(\tau) \textrm{e}^{i \mu_{l}(x-a)},
\end{aligned}
\end{equation}
where
\begin{equation}
\label{eq3.1.21}
 {G}_{l, l_{1}, l_{2}, l_{3}}^{k}(\zeta)=\textrm{e}^{-i(t_{k}+\zeta)\delta_{l,l_{1},l_{2}, l_{3}}}\widehat{\xi}_{l_{1}}\left(t_{k}+\zeta\right)\widehat{\xi}_{l_{2}}\left(t_{k}+\zeta\right)\widehat{\xi}_{l_{3}}\left(t_{k}+\zeta\right),\quad \zeta \in \mathbb{R},
\end{equation}
with $\delta_{l,l_{1},l_{2}, l_{3}}=\delta_{l}-\delta_{l_{1}}-\delta_{l_{2}}-\delta_{l_{3}}$ and $\delta_{l}=\sqrt{\beta+|\mu_{l}|^{\alpha}}$ for $l\in  {T}_{N_{0}}$. Thus,
\begin{equation}
\label{eq3.1.22}
 {R}_{1}^{n}=\frac{i\varepsilon^2}{8}\sum_{k=0}^{n}\sum_{l\in {T}_{N_{0}}}\sum_{\left(l_{1}, l_{2}, l_{3}\right) \in  {I}_{l}^{N_{0}}}\frac{1}{\delta_{l}}\Theta_{l,l_{1},l_{2},l_{3}}^{k}\textrm{e}^{i \mu_{l}(x-a)},
\end{equation}
where
\begin{equation}
\label{eq3.1.23}
\begin{split}
\Theta_{l,l_{1},l_{2},l_{3}}^{k}=\frac{\tau}{2}\left( {G}_{l, l_{1}, l_{2}, l_{3}}^{k}(\tau)+ {G}_{l, l_{1}, l_{2}, l_{3}}^{k}(0)\right)-\int_{0}^{\tau}G_{l, l_{1}, l_{2}, l_{3}}^{k}(s)\textrm{d}s.
\end{split}
\end{equation}
For a general function $f(s)\in C^{2}$, the following equation holds \cite{Dong2014,Germund2008},
\begin{equation}
\label{eq3.1.24}
\begin{split}
\frac{\tau}{2}(f(0)+f(\tau))-\int_{0}^{\tau}f(s)\textrm{d}s=\frac{\tau^{3}}{2}\int_{0}^{1}\mu(1-\mu)f''(\mu\tau)\textrm{d}\mu.
\end{split}
\end{equation}
Taking $f(s)=G_{l, l_{1}, l_{2}, l_{3}}^{k}(s)$ in \eqref{eq3.1.24}, expand function $f''(s)$ specifically, which yields
\begin{equation}
\label{eq3.1.25}
\begin{split}
\Theta_{l,l_{1},l_{2},l_{3}}^{k}= r_{l, l_{1}, l_{2}, l_{3}}\textrm{e}^{-it_{k}\delta_{l,l_{1},l_{2}, l_{3}}}c_{l, l_{1}, l_{2}, l_{3}}^{k},
\end{split}
\end{equation}
with
\begin{equation}
\label{eq3.1.26}
c_{l, l_{1}, l_{2}, l_{3}}^{k}=\widehat{\xi}_{l_{1}}\left(t_{k}\right) \widehat{\xi}_{l_{2}}\left(t_{k}\right) \widehat{\xi}_{l_{3}}\left(t_{k}\right),
\end{equation}

\begin{equation}
\label{eq3.1.27}
\begin{split}
r_{l, l_{1}, l_{2}, l_{3}}=O\left(\tau^{3}\left(\delta_{l, l_{1}, l_{2}, l_{3}}\right)^{2}\right).
\end{split}
\end{equation}
Assume $\delta_{l, l_{1}, l_{2}, l_{3}} \neq 0$  ($r_{l, l_{1}, l_{2}, l_{3}}=0$ if $\delta_{l, l_{1}, l_{2}, l_{3}}=0$). For $l \in {T}_{N_{0}}$ and $\left(l_{1}, l_{2}, l_{3}\right) \in {I}_{l}^{N_{0}}$, the following inequality holds,
\begin{equation}
\label{eq3.1.28}
\left|\delta_{l, l_{1}, l_{2}, l_{3}}\right| \leq 4 \delta_{N_{0} / 2}=4 \sqrt{\beta+|\mu_{N_{0} / 2}|^{\alpha}}<4 \sqrt{\beta+\frac{2^{\alpha} \pi^{\alpha}\left(1+\tau_{0}\right)^{\alpha}}{\tau_{0}^{\alpha}(b-a)^{\alpha}}}
\end{equation}
which implies
\begin{equation}
\label{eq3.1.29}
\frac{\tau}{2}\left|\delta_{l, l_{1}, l_{2}, l_{3}}\right| \leq \gamma \pi,
\end{equation}
if $0<\tau \leq \gamma \frac{\pi(b-a)^{\alpha/2} \tau_{0}^{\alpha/2}}{2 \sqrt{\beta\tau_{0}^{\alpha}(b-a)^{\alpha}+2^{\alpha} \pi^{\alpha}\left(1+\tau_{0}\right)^{\alpha}}}:=\tau_{0}^{\gamma}\left(0<\tau_{0}, \gamma<1\right)$. Denoting $S_{l, l_{1}, l_{2}, l_{3}}^{n}=$ \\
$\sum_{k=0}^{n}\textrm{e}^{-it_{k}\delta_{l, l_{1}, l_{2}, l_{3}}}(n\geq0)$, for $0<\tau\leq \tau_{0}^{\gamma}$, based on the fact $\frac{sin(x)}{x}$ is bounded and decreasing for $x\in[0, \gamma\pi)$, then the following inequality holds
\begin{equation}
\label{eq3.1.30}
\quad\left|S_{l, l_{1}, l_{2}, l_{3}}^{n}\right| \leq \frac{1}{|\sin(\tau\delta_{l, l_{1}, l_{2}, l_{3}}/2)|}\leq \frac{C}{\tau|\delta_{l, l_{1}, l_{2}, l_{3}}|}, \quad C=\frac{2\gamma\pi}{\sin(\gamma\pi)}.
\end{equation}
Using summation by parts, we derive from (\ref{eq3.1.25}) that
\begin{equation}
\label{eq3.1.31}
\sum_{k=0}^{n} \Theta_{l, l_{1}, l_{2}, l_{3}}^{k}=r_{l, l_{1}, l_{2}, l_{3}}\left[\sum_{k=0}^{n-1} S_{l, l_{1}, l_{2}, l_{3}}^{k}\left(c_{l, l_{1}, l_{2}, l_{3}}^{k}-c_{l, l_{1}, l_{2}, l_{3}}^{k+1}\right)+S_{l, l_{1}, l_{2}, l_{3}}^{n} c_{l, l_{1}, l_{2}, l_{3}}^{n}\right],
\end{equation}
with
\begin{equation}
\label{eq3.1.32}
\begin{aligned}
&c_{l, l_{1}, l_{2}, l_{3}}^{k}-c_{l, l_{1}, l_{2}, l_{3}}^{k+1} \\
=&\left(\widehat{\xi}_{l_{1}}\left(t_{k}\right)-\widehat{\xi}_{l_{1}}\left(t_{k+1}\right)\right) \widehat{\xi}_{l_{2}}\left(t_{k}\right) \widehat{\xi}_{l_{3}}\left(t_{k}\right)+\widehat{\xi}_{l_{1}}\left(t_{k+1}\right)\left(\widehat{\xi}_{l_{2}}\left(t_{k}\right)-\widehat{\xi}_{l_{2}}\left(t_{k+1}\right)\right) \widehat{\xi}_{l_{3}}\left(t_{k}\right)\\
&+\widehat{\xi}_{l_{1}}\left(t_{k+1}\right) \widehat{\xi}_{l_{2}}\left(t_{k+1}\right)\left(\widehat{\xi}_{l_{3}}\left(t_{k}\right)-\widehat{\xi}_{l_{3}}\left(t_{k+1}\right)\right) .
\end{aligned}
\end{equation}
Combining (\ref{eq3.1.27}), (\ref{eq3.1.30}), (\ref{eq3.1.31}) and (\ref{eq3.1.32}), we obtain
\begin{equation}
\label{eq3.1.33}
\begin{aligned}
\left|\sum_{k=0}^{n} \Theta_{l, l_{1}, l_{2}, l_{3}}^{k}\right| \lesssim & \tau^{2}\left|\delta_{l, l_{1}, l_{2}, l_{3}}\right| \sum_{k=0}^{n-1}\left(\left|\widehat{\xi}_{l_{1}}\left(t_{k}\right)-\widehat{\xi}_{l_{1}}\left(t_{k+1}\right)\right|\left|\widehat{\xi}_{l_{2}}\left(t_{k}\right)\right|\left|\widehat{\xi}_{l_{3}}\left(t_{k}\right)\right|\right.\\
&+\left|\widehat{\xi}_{l_{1}}\left(t_{k+1}\right)\right|\left|\widehat{\xi}_{l_{2}}\left(t_{k}\right)-\widehat{\xi}_{l_{2}}\left(t_{k+1}\right)\right|\left|\widehat{\xi}_{l_{3}}\left(t_{k}\right)\right| \\
&\left.+\left|\widehat{\xi}_{l_{1}}\left(t_{k+1}\right)\right|\left|\widehat{\xi}_{l_{2}}\left(t_{k+1}\right)\right|\left|\widehat{\xi}_{l_{3}}\left(t_{k}\right)-\widehat{\xi}_{l_{3}}\left(t_{k+1}\right)\right|\right) \\
&+\tau^{2}\left|\delta_{l, l_{1}, l_{2}, l_{3}}\right|\left|\widehat{\xi}_{l_{1}}\left(t_{n}\right)\right|\left|\widehat{\xi}_{l_{2}}\left(t_{n}\right)\right|\left|\widehat{\xi}_{l_{3}}\left(t_{n}\right)\right| .
\end{aligned}
\end{equation}
For $l \in {T}_{N_{0}}$, $\left(l_{1}, l_{2}, l_{3}\right) \in {I}_{l}^{N_{0}}$ and $\beta>0$, we know that
\begin{equation}
\label{eq3.1.34}
\begin{aligned}
\left|\delta_{l, l_{1}, l_{2}, l_{3}}\right| \lesssim\left(1+\left|\sum_{j=1}^{3} \mu_{l_{j}}\right|^{\alpha}\right)^{1 / 2}+\sum_{j=1}^{3} \sqrt{1+|\mu_{l_{j}}|^{\alpha}} \lesssim \prod_{j=1}^{3} \sqrt{1+|\mu_{l_{j}}|^{\alpha}}.
\end{aligned}
\end{equation}
Based on (\ref{eq3.1.22}),(\ref{eq3.1.33}),(\ref{eq3.1.34}), then
\begin{equation}
\label{eq3.1.35}
\begin{aligned}
&\left\|{R}_{1}^{n}\right\|_{\alpha/2}^{2}\lesssim \varepsilon^{4} \sum_{l \in  {T}_{N_{0}}}\left|\sum_{\left(l_{1}, l_{2}, l_{3}\right) \in  {I}_{l}^{N_{0}}} \sum_{k=0}^{n} \Theta_{l, l_{1}, l_{2}, l_{3}}^{k}\right|^{2}\\
 \lesssim & \varepsilon^{4} \tau^{4}\left\{\sum_{l \in  {T}_{N_{0}}}\left(\sum_{\left(l_{1}, l_{2}, l_{3}\right) \in {I}_{l}^{N_{0}}}\left|\widehat{\xi}_{l_{1}}\left(t_{n}\right)\right|\left|\widehat{\xi}_{l_{2}}\left(t_{n}\right)\right|\left|\widehat{\xi}_{l_{3}}\left(t_{n}\right)\right| \prod_{j=1}^{3} \sqrt{1+|\mu_{l_{j}}|^{\alpha}}\right)^{2}\right.\\
&+n \sum_{k=0}^{n-1} \sum_{l \in  {T}_{N_{0}}}\left[\left(\sum_{\left(l_{1}, l_{2}, l_{3}\right) \in {I}_{l}^{N_{0}}}\left|\widehat{\xi}_{l_{1}}\left(t_{k}\right)-\widehat{\xi}_{l_{1}}\left(t_{k+1}\right)\right|\left|\widehat{\xi}_{l_{2}}\left(t_{k}\right)\right|\left|\widehat{\xi}_{l_{3}}\left(t_{k}\right)\right| \prod_{j=1}^{3} \sqrt{1+|\mu_{l_{j}}|^{\alpha}}\right)^{2}\right.\\
&+\left(\sum_{\left(l_{1}, l_{2}, l_{3}\right) \in {I}_{l}^{N_{0}}}\left|\widehat{\xi}_{l_{1}}\left(t_{k+1}\right)\right|\left|\widehat{\xi}_{l_{2}}\left(t_{k}\right)-\widehat{\xi}_{l_{2}}\left(t_{k+1}\right)\right|\left|\widehat{\xi}_{l_{3}}\left(t_{k}\right)\right| \prod_{j=1}^{3} \sqrt{1+|\mu_{l_{j}}|^{\alpha}}\right)^{2}\\
&+\left.\left.\left(\sum_{\left(l_{1}, l_{2}, l_{3}\right) \in {I}_{l}^{N_{0}}}\left|\widehat{\xi}_{l_{1}}\left(t_{k+1}\right)\right|\left|\widehat{\xi}_{l_{2}}\left(t_{k+1}\right)\right|\left|\widehat{\xi}_{l_{3}}\left(t_{k}\right)-\widehat{\xi}_{l_{3}}\left(t_{k+1}\right)\right| \prod_{j=1}^{3} \sqrt{1+|\mu_{l_{j}}|^{\alpha}}\right)^{2}\right]\right\}.
\end{aligned}
\end{equation}
We get the estimate on the RHS of (\ref{eq3.1.35}) with the aid of the following auxiliary functions,
\begin{equation*}
\theta(x)=\sum_{l \in \mathbb{Z}} \sqrt{1+|\mu_{l_{j}}|^{\alpha}}\left|\widehat{\xi}_{l}\left(t_{n}\right)\right| \textrm{e}^{i \mu_{l}(x-a)},
\end{equation*}
\begin{equation*}
\theta_{1}(x)=\sum_{l \in \mathbb{Z}} \sqrt{1+|\mu_{l_{j}}|^{\alpha}}\left|\widehat{\xi}_{l}\left(t_{k}\right)-\widehat{\xi}_{l}\left(t_{k+1}\right)\right| \textrm{e}^{i \mu_{l}(x-a)},
\end{equation*}
\begin{equation*}
\theta_{2}(x)=\sum_{l \in \mathbb{Z}} \sqrt{1+|\mu_{l_{j}}|^{\alpha}}\left|\widehat{\xi}_{l}\left(t_{k}\right)\right| \textrm{e}^{i \mu_{l}(x-a)},
\end{equation*}
and
\begin{equation*}
\theta_{3}(x)=\sum_{l \in \mathbb{Z}} \sqrt{1+|\mu_{l_{j}}|^{\alpha}}\left|\widehat{\xi}_{l}\left(t_{k+1}\right)\right| \textrm{e}^{i \mu_{l}(x-a)}.
\end{equation*}
By assumption $(\mathrm{A})$ we know $\|\theta\|_{s} \lesssim\left\|\xi\left(t_{n}\right)\right\|_{s+\alpha/2}(s \leq m)$.
Thus,
\begin{equation}
\label{eq3.1.38}
\begin{aligned}
\left\|{R}_{1}^{n}\right\|_{\alpha/2}^{2} \lesssim & \varepsilon^{4} \tau^{4}\left[\left\|\xi\left(t_{n}\right)\right\|_{m+\alpha/2}^{6}+n \sum_{k=0}^{n-1}\left\|\xi\left(t_{k}\right)-\xi\left(t_{k+1}\right)\right\|_{m+\alpha/2}^{2}\right.\\
&\left.\left(\left\|\xi\left(t_{k}\right)\right\|_{m+\alpha/2}+\left\|\xi\left(t_{k+1}\right)\right\|_{m+\alpha/2}\right)^{4}\right]\\
\lesssim & \varepsilon^{4} \tau^{4}+n^{2} \varepsilon^{4} \tau^{4}\left(\varepsilon^{2} \tau\right)^{2} \lesssim \varepsilon^{4} \tau^{4}, \quad 0 \leq n \leq T_{\varepsilon} / \tau-1.
\end{aligned}
\end{equation}
The similar estimates can be established for ${R}_{q}^{n}(q=2,3,4)$, from (\ref{eq3.1.15}) and (\ref{eq3.1.17}), the following inequality is obtained,
\begin{equation}
\label{eq2.52}
\begin{aligned}
\left\|e^{[n+1]}\right\|_{\alpha/2} \lesssim \tau_{0}^{m+\alpha/2}+\varepsilon^{2} \tau^{2}+\varepsilon^{2} \tau \sum_{k=0}^{n}\left\|e^{[k]}\right\|_{\alpha/2}, \quad 0 \leq n \leq T_{\varepsilon} / \tau-1.
\end{aligned}
\end{equation}
The discrete Gronwall's implies
\begin{equation}
\label{eq2.53}
\begin{aligned}
\left\|e^{[n+1]}\right\|_{\alpha/2} \lesssim \varepsilon^{2} \tau^{2}+ \tau_{0}^{m+\alpha/2}, \quad 0 \leq n \leq T_{\varepsilon} / \tau-1,
\end{aligned}
\end{equation}
the error bound (\ref{eq2.15}) follows in view of (\ref{eq2.1.5}) and (\ref{eq2.2.5}).

\subsection{Proof of Theorem \ref{thm3.2}}
\label{sec:3.2}
Let $\psi^{n}$, $\eta^{n}$ and $\varphi^{n}$ be the numerical approximations. Under the assumption $(\mathrm{A})$, for $0<\tau \leq \tau_{c}, 0<h \leq h_{c}$, where $\tau_{c}, h_{c}$ are constants independent of $\varepsilon$, there exists a constant $K>0$ depending on $T,\left\|\psi_{0}\right\|_{m+\alpha/2},\left\|\psi_{1}\right\|_{m}$, $\|\psi\|_{C^{2}\left(\left[0, T_{\varepsilon}\right] ; H^{m+\alpha/2}\right)}$ and $\left\|\partial_{t} \psi\right\|_{C^{2}\left(\left[0, T_{\varepsilon}\right] ; H^{m}\right)}$ such that the numerical solution satisfies
\begin{equation}
\label{eq3.2.1}
\begin{aligned}
\left\|I_{N} \psi^{n}\right\|_{m+\alpha/2}^{2}+\left\|I_{N} \eta^{n}\right\|_{m}^{2} \leq K, i.e., \left\|I_{N} \varphi^{n}\right\|_{m+\alpha/2}^{2} \leq K, \quad 0 \leq n \leq T_{\varepsilon}/{\tau}.
\end{aligned}
\end{equation}

Since $\varphi\left(\cdot, t_{n}\right)-I_{N} \varphi^{n}=\varphi\left(\cdot, t_{n}\right)-\varphi^{[n]}+\varphi^{[n]}-P_{N} \varphi^{[n]}+P_{N} \varphi^{[n]}-I_{N} \varphi^{n}$, we derive that
\begin{equation}
\label{eq3.2.2}
\begin{aligned}
\left\|\varphi\left(\cdot, t_{n}\right)-I_{N} \varphi^{n}\right\|_{\alpha/2} \leq\left\|P_{N} \varphi^{[n]}-I_{N} \varphi^{n}\right\|_{\alpha/2}+C_{1}\left(\varepsilon^{2} \tau^{2}+\tau_{0}^{m+\alpha/2}+h^{m}\right)
\end{aligned}
\end{equation}

Define the error function $e^{n}:=e^{n}(x) \in$ $Y_{N}$,
$$
e^{n}:=P_{N} \varphi^{[n]}-I_{N} \varphi^{n}, \quad 0 \leq n \leq T_{\varepsilon}/{\tau}.
$$
From \eqref{eq2.2.4} and \eqref{eq2.3.1}, we get the following equations,
$$
\begin{aligned}
&P_{N} \varphi^{[n+1]}=\textrm{e}^{i \tau\langle\nabla\rangle_{\alpha}} P_{N} \varphi^{[n]}+ \varepsilon^{2} \tau/2 \left(P_{N}G\left( \varphi^{[n+1]}\right)+e^{i \tau\langle\nabla\rangle_{\alpha}}P_{N}G\left( \varphi^{[n]}\right)\right),\\
&I_{N} \varphi^{n+1}=\textrm{e}^{i \tau\langle\nabla\rangle_{\alpha}} I_{N} \varphi^{n}+\varepsilon^{2} \tau/2 \left(I_{N}G\left( I_{N} \varphi^{n+1}\right)+e^{i \tau\langle\nabla\rangle_{\alpha}}I_{N}G\left( I_{N} \varphi^{n}\right)\right) ,
\end{aligned}
$$
which lead to
\begin{equation}
\label{eq3.2.3}
\begin{aligned}
e^{n+1}=&\textrm{e}^{i \tau\langle\nabla\rangle_{\alpha}} e^{n}+\varepsilon^{2} \tau/2 \left(P_{N}G\left( \varphi^{[n+1]}\right)-I_{N}G\left( I_{N} \varphi^{n+1}\right)\right.\\
&\left.+\textrm{e}^{i \tau\langle\nabla\rangle_{\alpha}}\left(P_{N}G\left( \varphi^{[n]}\right)-I_{N}G\left( I_{N} \varphi^{n}\right)\right)\right),
\end{aligned}
\end{equation}
when $\tau$ is sufficiently small and $0<\tau<1/{\varepsilon^2}$, then
\begin{equation}
\label{eq3.2.4}
\begin{aligned}
\|e^{n+1}\|_{\alpha/2}\lesssim \|e^{n}\|_{\alpha/2}+\varepsilon^{2} \tau h^{m+\alpha/2},\quad 0 \leq n \leq T_{\varepsilon} / \tau.
\end{aligned}
\end{equation}
Since $e^{0}=P_{N} u_{0}-I_{N} u_{0}-$ $i\langle\nabla\rangle_{\alpha}^{-1}\left(P_{N} u_{1}-I_{N} u_{1}\right)$, then $\left\|e^{0}\right\|_{\alpha/2} \lesssim h^{m}$, the discrete Gronwall's inequality implies $\left\|e^{n+1}\right\|_{\alpha/2} \lesssim h^{m}\left(0 \leq n \leq T_{\varepsilon} / \tau-1\right)$. Combining the above estimtates with (\ref{eq3.2.2}), we derive
$$
\left\|\varphi\left(\cdot, t_{n}\right)-I_{N} \varphi^{n}\right\|_{\alpha/2} \lesssim h^{m}+\varepsilon^{2} \tau^{2}+\tau_{0}^{m+\alpha/2}, \quad 0 \leq n \leq T_{\varepsilon} / \tau.
$$
Recalling (\ref{eq2.3.2}), the error bounds for $\psi^{n}$ and $\eta^{n}\left(0 \leq n \leq T_{\varepsilon} / \tau\right)$ are
$$
\begin{aligned}
\left\|\psi\left(\cdot, t_{n}\right)-I_{N} \psi^{n}\right\|_{\alpha/2} &=\frac{1}{2}\left\|\varphi\left(\cdot, t_{n}\right)+\overline{\varphi\left(\cdot, t_{n}\right)}-I_{N} \varphi^{n}-I_{N} \overline{\varphi^{n}}\right\|_{\alpha/2} \\
& \leq\left\|\varphi\left(\cdot, t_{n}\right)-I_{N} \varphi^{n}\right\|_{\alpha/2} \lesssim \varepsilon^{2} \tau^{2}+\tau_{0}^{m+\alpha/2}+h^{m} \\
\left\|\eta\left(\cdot, t_{n}\right)-I_{N} \eta^{n}\right\| &=\frac{1}{2}\left\|\langle\nabla\rangle_{\alpha}\left(\varphi\left(\cdot, t_{n}\right)-\overline{\varphi\left(\cdot, t_{n}\right)}\right)-\langle\nabla\rangle_{\alpha}\left(I_{N} \varphi^{n}-I_{N} \overline{\varphi^{n}}\right)\right\| \\
&\lesssim\left\|\varphi\left(\cdot, t_{n}\right)-I_{N} \varphi^{n}\right\|_{\alpha/2}\lesssim \varepsilon^{2} \tau^{2}+\tau_{0}^{m+\alpha/2}+h^{m},
\end{aligned}
$$
which show (\ref{eq3.5}) is valid and we complete the proof of Theorem 2.

\section{Extensions}
\label{sec:4}
In this section, we discuss the error estimates of the complex NSFKGE and oscillatory complex NSFKGE with nonlinear terms of general power exponents.
\subsection{The complex NSFKGE with nonlinear terms of general power exponents}
\label{sec:4.1}
Consider the following dimensionless nonlinear complex NSFKGE with nonlinear terms of general power exponents,
\begin{equation}\label{eq4.1.1}
\left\{
\begin{aligned}&\partial_{t t} \psi(\textbf{x}, t)+(-\Delta)^{\frac{\alpha}{2}} \psi(\textbf{x}, t)+\beta \psi(\textbf{x}, t)+\varepsilon^{2p} |\psi(\textbf{x}, t)|^{2p}\psi(\textbf{x}, t)=0,  \quad\textbf{x} \in \Omega, \quad t>0,  \\
 &\psi(\textbf{x}, 0)=u_{0}(\textbf{x}), \quad \partial_{t} \psi(\textbf{x}, 0)=\psi_{1}(\textbf{x}),  \quad\textbf{x} \in \Omega,\\
\end{aligned}
\right.
\end{equation}
with periodic boundary equations. $\psi:=\psi(\textbf{x}, t)$ is a complex-valued function, $p\in \mathbb{N}^{+}$. $\psi_{0}(\textbf{x})$ and $\psi_{1}(\textbf{x})$ are two known complex-valued functions independent of $\varepsilon$. When $\alpha=2, \beta=1$, the analysis results show that the life-span of NKGE smooth solution is at least $O(\varepsilon^{-2p})$, see \cite{Delort2009,Delort2004,Fang2010} and references therein.

Here, we only show the theoretical result in 1D. Introducing $\eta(x,t)=\partial_t\psi(x,t)$ and
\begin{equation}\label{eq4.1.22}
\varphi_{\pm}(x, t)=\psi(x, t) \mp i\langle\nabla\rangle^{-1} \eta(x, t), \quad a \leq x \leq b, \quad t \geq 0,
\end{equation}
and introducing $f(\varphi)=|\varphi|^{2 p} \varphi$, then Eq. \eqref{eq4.1.1} can be transformed into the following coupled relativistic NLSFSEs:
\begin{equation}\label{eq4.1.33}
\left\{\begin{array}{l}
i \partial_t \varphi_{\pm} \pm\langle\nabla\rangle_{\alpha} \varphi_{\pm} \pm \varepsilon^{2 p}\langle\nabla\rangle^{-1}_{\alpha} f\left(\frac{1}{2} \varphi_{+}+\frac{1}{2} \varphi_{-}\right)=0, \\
\varphi_{\pm}(t=0)=u_0 \mp i\langle\nabla\rangle^{-1} v_0 .
\end{array}\right.
\end{equation}
Suppose that there exists an exact solution $\psi:=\psi(x,t)$ of the NSFKGE \eqref{eq4.1.1} up to the time $T_{\varepsilon,p}=T/\varepsilon^{2p}$ and

(B)\quad $\|\psi\|_{C^{2}([0,T_{\varepsilon,p}]; H^{m+\alpha/2})}\lesssim 1, \quad \|\partial_{t}\psi\|_{C^{2}([0,T_{\varepsilon,p}]; H^{m})}\lesssim 1, \quad m>1.$

We then give the following improved uniform error bounds.
\setcounter{thm}{0}
\begin{thm}\label{thm4.1.1}
Under the assumption (B), there exist $h_{0}>0$ and $0<\tau_{0}<1$ are small enough and independent of $\varepsilon$, such that when $0<h \leq h_{0}$, $0<\tau<\gamma \tau_{0}$ where $\gamma >0 $ is a fixed constant, for any $0<\varepsilon \leq 1$,  we have the following improved uniform error estimate
 \begin{equation}
\label{eqeq4.1.2}
\begin{aligned}
&\left\|\psi\left(\cdot, t_{n}\right)-I_{N} \psi^{n}\right\|_{\alpha/2}+\left\|\partial_{t} \psi\left(\cdot, t_{n}\right)-I_{N} \eta^{n}\right\|\\
&\lesssim h^{m}+\varepsilon^{2p} \tau^{2}+\tau_{0}^{m+\alpha/2}, \quad 0 \leq n \leq T_{\varepsilon,p}/{\tau}.
\end{aligned}
\end{equation}
In particular, if the exact solution is sufficiently smooth, the improved uniform error bounds for sufficiently small $\tau$ is
\begin{equation}
\label{eq4.1.3}
\begin{aligned}
&\left\|\psi\left(\cdot, t_{n}\right)-I_{N} \psi^{n}\right\|_{\alpha/2}+\left\|\partial_{t} \psi\left(\cdot, t_{n}\right)-I_{N} \eta^{n}\right\|\\
& \lesssim h^{m}+\varepsilon^{2p} \tau^{2}, \quad 0 \leq n \leq T_{\varepsilon,p}/{\tau}.
\end{aligned}
\end{equation}
\end{thm}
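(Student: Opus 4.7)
The plan is to mirror the argument used for Theorems~\ref{thm3.1} and \ref{thm3.2}, adapted to the coupled system \eqref{eq4.1.33} for $\varphi_{\pm}$ with nonlinearity $f(\varphi)=|\varphi|^{2p}\varphi=\varphi^{p+1}\bar{\varphi}^{p}$ and the longer time window $T_{\varepsilon,p}=T/\varepsilon^{2p}$. First I would derive a second-order EWI (Deuflhard) time-discrete scheme for each component $\varphi_\pm^{[n]}$ exactly as in \eqref{eq2.2.3}--\eqref{eq2.2.4}, and then the fully discrete Fourier pseudospectral scheme $I_N\varphi_\pm^n$ analogous to \eqref{eq2.3.1}. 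Under assumption (B), a bootstrap/induction argument yields the a priori bound $\|I_N\varphi_\pm^n\|_{m+\alpha/2}^2\le K$ for $0\le n\le T_{\varepsilon,p}/\tau$, which is the analogue of \eqref{eq3.1.1}/\eqref{eq3.2.1}.

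Next I would set up the error equation for $e_\pm^{[n]}:=\varphi_\pm^{[n]}-\varphi_\pm(t_n)$, splitting it into a local quadrature defect $\mathcal{E}^n$ and a nonlinear increment $Q^n$ as in \eqref{eq3.1.3}--\eqml{eq3.1.5}. Because $f$ is locally Lipschitz of order $2p$ and the a priori bound on $\varphi_\pm^{[n]}$ is uniform, one obtains $\|Q^n\|_{\alpha/2}\lesssim \varepsilon^{2p}\tau(\|e^{[n+1]}\|_{\alpha/2}+\|e^{[n]}\|_{\alpha/2})$, reducing the problem to estimating $\sum_{k=0}^{n}\textrm{e}^{-i(k+1)\tau\langle\nabla\rangle_\alpha}\mathcal{E}^k$ in $H^{\alpha/2}$.

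The core is to apply the RCO technique to that residual. I would introduce the twisted variables $\xi_\pm(x,t)=\textrm{e}^{\mp it\langle\nabla\rangle_\alpha}\varphi_\pm(x,t)$, which satisfy $\partial_t\xi_\pm=O(\varepsilon^{2p})$ and hence $\|\xi_\pm(t_{n+1})-\xi_\pm(t_n)\|_{m+\alpha/2}\lesssim \varepsilon^{2p}\tau$. Picking a cut-off parameter $\tau_0$ and truncating to low frequencies $|l|\le 1/\tau_0$ produces the projection remainder $\tau_0^{m+\alpha/2}$. The remaining truncated residual is expanded in Fourier modes: since $f(\tfrac{1}{2}\varphi_++\tfrac{1}{2}\varphi_-)$ is a sum of homogeneous monomials of total degree $2p+1$ in $\{\xi_\pm,\bar\xi_\pm\}$, the relevant oscillatory phase is $\delta_{l,l_1,\dots,l_{2p+1}}=\delta_l\pm\delta_{l_1}\pm\cdots\pm\delta_{l_{2p+1}}$. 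The CFL-type constraint $0<\tau\le\gamma\tau_0$ guarantees $\tfrac{\tau}{2}|\delta_{l,l_1,\dots,l_{2p+1}}|\le \gamma\pi$ uniformly over the truncated index set, so the geometric-sum bound $|S^n_{l,l_1,\dots,l_{2p+1}}|\lesssim (\tau|\delta_{l,l_1,\dots,l_{2p+1}}|)^{-1}$ survives. Summation by parts in the time index, together with $r_{l,l_1,\dots,l_{2p+1}}=O(\tau^{3}\delta_{l,l_1,\dots,l_{2p+1}}^{2})$ from the quadrature error formula \eqref{eq3.1.24}, produces the gain of one extra $\tau$ exactly as in \eqref{eq3.1.33}, while each telescoping increment $\widehat{\xi}_\pm(t_k)-\widehat{\xi}_\pm(t_{k+1})$ contributes the crucial $\varepsilon^{2p}\tau$ factor. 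Summing the Cauchy-Schwarz/Parseval bound over the monomials with the auxiliary functions $\theta,\theta_1,\theta_2,\theta_3$ gives $\|R^n\|_{\alpha/2}^2\lesssim \varepsilon^{4p}\tau^4+n^2\varepsilon^{4p}\tau^4(\varepsilon^{2p}\tau)^2\lesssim \varepsilon^{4p}\tau^4$ on the whole window $n\le T_{\varepsilon,p}/\tau$, which is the $p$-general analogue of \eqref{eq3.1.38}.

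Combining these pieces yields a discrete Gronwall inequality $\|e^{[n+1]}\|_{\alpha/2}\lesssim \tau_0^{m+\alpha/2}+\varepsilon^{2p}\tau^2+\varepsilon^{2p}\tau\sum_{k=0}^{n}\|e^{[k]}\|_{\alpha/2}$, whose resolution over $n\tau\le T/\varepsilon^{2p}$ gives the temporal bound $\varepsilon^{2p}\tau^2+\tau_0^{m+\alpha/2}$. The fully discrete bound then follows the scheme of Theorem~\ref{thm3.2}: introducing $e^n=P_N\varphi_\pm^{[n]}-I_N\varphi_\pm^n$, the aliasing/projection estimates for the trigonometric interpolation operator (combined with the polynomial-growth Lipschitz property of $f$ and the bound \eqref{eq3.2.1}) give $\|e^{n+1}\|_{\alpha/2}\lesssim \|e^n\|_{\alpha/2}+\varepsilon^{2p}\tau h^{m+\alpha/2}$, and a second Gronwall argument plus \eqref{eq4.1.22} transfers the estimate back to $\psi$ and $\partial_t\psi$, proving \eqref{eqeq4.1.2} and \eqref{eq4.1.3}. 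The main technical obstacle I foresee is the combinatorial bookkeeping of the $(2p+1)$-fold convolution sums and the corresponding bound on $|\delta_{l,l_1,\dots,l_{2p+1}}|$ by $\prod_{j=1}^{2p+1}\sqrt{1+|\mu_{l_j}|^\alpha}$; this is where the regularity requirement $m>1$ (and the implicit Sobolev algebra property) is used to absorb the high-order Sobolev norms of $\xi_\pm$ produced by the $2p+1$ auxiliary functions.
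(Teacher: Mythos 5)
Your proposal is correct and follows exactly the route the paper intends: the paper in fact prints no proof of Theorem~\ref{thm4.1.1} at all, stating it in Section~4 as a direct extension of Theorems~\ref{thm3.1} and~\ref{thm3.2}, and your reconstruction — coupled system for $\varphi_{\pm}$, twisted variables with $\partial_t\xi_\pm=O(\varepsilon^{2p})$, frequency cut-off at $1/\tau_0$, phase/summation-by-parts bound on the $(2p+1)$-fold convolution sums, and the two Gronwall steps over $n\tau\le T/\varepsilon^{2p}$ — is the faithful generalization of the Section~3 arguments, with the key cancellations ($n\varepsilon^{2p}\tau\lesssim 1$) checked correctly.
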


\begin{remark}\label{rem4.1}
The NSFKGE \eqref{eq4.1.1} conserves the energy as
\begin{equation}\label{eq4.1.4}
\begin{aligned}
E(t) &:=\int_{\Omega}\left[|\partial_{t} \psi(x, t)|^{2}+|(-\Delta)^{\frac{\alpha}{4}} \psi(x, t)|^{2}+\beta |\psi(x, t)|^{2}+\frac{\varepsilon^{2p}}{p+1}|\psi(x, t)|^{2p+2}\right] \textrm{d}x \\
& \equiv \int_{\Omega}\left[|\psi_{1}(x)|^{2}+|(-\Delta)^{\frac{\alpha}{4}} \psi_{0}(x)|^{2}+\beta|\psi_{0}(x)|^{2}+\frac{\varepsilon^{2p}}{p+1}|\psi_{0}(x)|^{2p+2}\right] \textrm{d}x\\
&=E(0), \quad t \geq 0 .
\end{aligned}
\end{equation}
\end{remark}

\subsection{An oscillatory complex NSFKGE}
\label{sec:4.2}
Rescale in time
\begin{equation}\label{eq4.2.1}
\begin{aligned}
t=\frac{r}{\varepsilon^{2p}}\Longleftrightarrow r=\varepsilon^{2p} t, \Phi(\textbf{x}, r)=\psi(\textbf{x}, t),
\end{aligned}
\end{equation}
then the Eq. \eqref{eq4.1.1} can be reformulated as the following oscillatory complex NSFKGE
\begin{equation}\label{eq4.2.2}
\left\{
\begin{aligned}&\varepsilon^{2p}\partial_{r r} \Phi(\textbf{x}, r)+\frac{1}{\varepsilon^{2p}}(-\Delta)^{\frac{\alpha}{2}} \Phi(\textbf{x}, r)+\frac{\beta}{\varepsilon^{2p}} \Phi(\textbf{x}, r)+ |\Phi(\textbf{x}, r)|^{2p}\Phi(\textbf{x}, r)=0,    \\
 &\Phi(\textbf{x}, 0)=\psi_{0}(\textbf{x}), \quad \partial_{r} \Phi(\textbf{x}, 0)=\frac{1}{\varepsilon^{2p}}\psi_{1}(\textbf{x}),  \quad\textbf{x} \in \Omega,\\
\end{aligned}
\right.
\end{equation}
where $\textbf{x} \in \Omega, r>0$. Denote $\Upsilon(\textbf{x}, r)=\partial_{r} \Phi(\textbf{x}, r)$, taking the time step $\lambda=\varepsilon^{2p}\tau$. Extend the improved error bounds of the long-time problem to the oscillatory complex NSFKGE \eqref{eq4.2.2} up to the fixed time $T$. For convenience, we only consider 1D problem, and assume existence of the exact solution $\Phi:=\Phi(x,t)$ of the NSFKGE \eqref{eq4.2.2}, and

(C)\quad $\|\Phi\|_{C^{2}([0,T]; H^{m+\alpha/2})}\lesssim 1, \quad \|\partial_{r}\Phi\|_{C^{2}([0,T]; H^{m})}\lesssim \frac{1}{\varepsilon^{2p}}, \quad m>1.$

\begin{thm}\label{thm4.2.1}
Under the assumption (C), there exist $h_{0}>0$ and $0<\lambda_{0}<1$ are small enough and independent of $\varepsilon$ such that, when $0<h \leq h_{0}$, $0<\lambda<\gamma\lambda_{0}\varepsilon^{2p}$ where $\gamma >0 $ is a fixed constant, for any $0<\varepsilon \leq 1$, we have the following improved uniform error estimate
 \begin{equation}
\label{eq4.2.3}
\begin{aligned}
&\left\|\Phi\left(\cdot, r_{n}\right)-I_{N} \Phi^{n}\right\|_{\alpha/2}+\varepsilon^{2p}\left\|\partial_{r} \Phi\left(\cdot, t_{n}\right)-I_{N} \Upsilon^{n}\right\|\\
&\lesssim h^{m}+\frac{\lambda^{2}}{\varepsilon^{2p}}+\lambda_{0}^{m+\alpha/2}, \quad 0 \leq n \leq T/{\lambda}.
\end{aligned}
\end{equation}
In particular, if the exact solution is sufficiently smooth, the improved uniform error bounds for sufficiently small time step $\lambda$ is
\begin{equation}
\label{eq4.2.4}
\begin{aligned}
&\left\|\Phi\left(\cdot, r_{n}\right)-I_{N} \Phi^{n}\right\|_{\alpha/2}+\varepsilon^{2p}\left\|\partial_{r} \Phi\left(\cdot, t_{n}\right)-I_{N} \mu^{n}\right\|\\
&\lesssim h^{m}+\frac{\lambda^{2}}{\varepsilon^{2p}}, \quad 0 \leq n \leq T/{\lambda}.
\end{aligned}
\end{equation}
\end{thm}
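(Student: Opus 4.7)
The plan is to derive Theorem~\ref{thm4.2.1} directly from Theorem~\ref{thm4.1.1} by inverting the time rescaling \eqref{eq4.2.1}. First I would set $\psi(x,t):=\Phi(x,\varepsilon^{2p}t)$ on the long-time interval $t\in[0,T_{\varepsilon,p}]$. A direct computation shows $\psi$ solves the non-oscillatory complex NSFKGE \eqref{eq4.1.1}, and assumption (C) on $\Phi$ transfers to assumption (B) on $\psi$: indeed $\|\psi(\cdot,t)\|_{H^{m+\alpha/2}}=\|\Phi(\cdot,\varepsilon^{2p}t)\|_{H^{m+\alpha/2}}\lesssim 1$, while the identities $\partial_t\psi=\varepsilon^{2p}\partial_r\Phi$ and $\partial_{tt}\psi=\varepsilon^{4p}\partial_{rr}\Phi$, combined with the $\varepsilon^{-2p}$ bounds in (C), absorb the rapid-oscillation factors and return $O(1)$ quantities in the appropriate $H^m$ and $H^{m+\alpha/2}$ norms for $\psi$ and $\partial_t\psi$.

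The next step is to match the time-discrete schemes under the rescaling. The dispersion operator in \eqref{eq4.2.2} is $\langle\nabla\rangle_\alpha/\varepsilon^{2p}$, so the EWI propagator over a step of size $\lambda$ in $r$ is $\textrm{e}^{i\lambda\langle\nabla\rangle_\alpha/\varepsilon^{2p}}=\textrm{e}^{i\tau\langle\nabla\rangle_\alpha}$ with $\tau:=\lambda/\varepsilon^{2p}$. A parallel rescaling of the nonlinear Duhamel term (whose prefactor $\varepsilon^{-2p}$ from the $\langle\nabla\rangle_\alpha^{-1}$ normalization combines with the change of integration variable $s\mapsto s/\varepsilon^{2p}$) shows that the EWI full-discretization of \eqref{eq4.2.2} with step $\lambda$ is identical to the EWI full-discretization of \eqref{eq4.1.1} applied to $\psi$ with step $\tau$. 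The restriction $\tau<\gamma\tau_0$ appearing in Theorem~\ref{thm4.1.1} then becomes $\lambda<\gamma\tau_0\varepsilon^{2p}$, which matches the hypothesis $\lambda<\gamma\lambda_0\varepsilon^{2p}$ of Theorem~\ref{thm4.2.1} upon setting $\lambda_0:=\tau_0$.

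With these correspondences in place, Theorem~\ref{thm4.1.1} applied to $\psi$ on $[0,T_{\varepsilon,p}]$ yields
\begin{equation*}
\|\psi(\cdot,t_n)-I_N\psi^n\|_{\alpha/2}+\|\partial_t\psi(\cdot,t_n)-I_N\eta^n\|\lesssim h^m+\varepsilon^{2p}\tau^2+\tau_0^{m+\alpha/2}
\end{equation*}
for $0\le n\le T_{\varepsilon,p}/\tau=T/\lambda$. Using $\Phi(\cdot,r_n)=\psi(\cdot,t_n)$ and $\partial_r\Phi=\varepsilon^{-2p}\partial_t\psi$, the left-hand side transforms into $\|\Phi(\cdot,r_n)-I_N\Phi^n\|_{\alpha/2}+\varepsilon^{2p}\|\partial_r\Phi(\cdot,r_n)-I_N\Upsilon^n\|$, while the temporal error $\varepsilon^{2p}\tau^2$ becomes $\varepsilon^{2p}(\lambda/\varepsilon^{2p})^2=\lambda^2/\varepsilon^{2p}$. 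This delivers exactly the improved uniform bound \eqref{eq4.2.3}, and \eqref{eq4.2.4} follows by the same small-cut-off limit as in the smooth case of Theorem~\ref{thm4.1.1}.

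The main obstacle is bookkeeping rather than fresh analysis: one must carefully verify that the numerical outputs $\Phi^n,\Upsilon^n$ defined by the EWI scheme directly for \eqref{eq4.2.2} with step $\lambda$ do rescale to the $\psi^n,\eta^n$ of Theorem~\ref{thm4.1.1} with step $\tau$, and that the Deuflhard trapezoidal quadrature commutes with the change of variables $s\mapsto \varepsilon^{2p}s$ in the Duhamel integral. Once that identification is secured, no fresh RCO argument is needed: the low-frequency spectral cut-off at scale $\tau_0$, the summation-by-parts estimate \eqref{eq3.1.31}--\eqref{eq3.1.33}, and the non-resonance bound \eqref{eq3.1.30} are all inherited verbatim through the rescaling, and only the final algebraic translation of the error rates has to be performed by hand.
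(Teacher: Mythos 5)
Your proposal is correct and is essentially the argument the paper intends: the paper introduces the rescaling $r=\varepsilon^{2p}t$, $\lambda=\varepsilon^{2p}\tau$ precisely so that Theorem~\ref{thm4.2.1} follows from Theorem~\ref{thm4.1.1} by transferring assumption (C) to (B), identifying the schemes under the change of variables, and translating $\varepsilon^{2p}\tau^{2}$ into $\lambda^{2}/\varepsilon^{2p}$ and $\|\partial_t\psi-I_N\eta^n\|$ into $\varepsilon^{2p}\|\partial_r\Phi-I_N\Upsilon^n\|$. Your write-up simply makes explicit the bookkeeping that the paper leaves implicit.
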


\section{Numerical results}
\label{sec:5}
We demonstrate a few numerical examples in 1D and 2D in this part to illustrate the improved uniform error bounds for the long-time dynamics of the NSFKGE and oscillating NSFKGE.
\subsection{The long-time dynamics in 1D}
\label{sec:5.1}
First, we study the long-time errors for the \eqref{eq4.1.1} in 1D with $p=2$ and real-valued initial data as
\begin{equation}
\label{eq5.1.1}
\begin{aligned}
\psi_0(x)=\frac{3}{2+\cos ^2(x)}, \quad \psi_1(x)=\frac{3}{4+\cos ^2(x)}, \quad x \in \Omega=(0,2 \pi).
\end{aligned}
\end{equation}
For no exact solution to \eqref{eq4.1.1}, we use the numerical solution calculated with very fine space step ($h_{e}=\pi/64(i.e. N=128)$) and time step ($\tau_{e}=10^{-4}$) as the `exact' solution. We define the following error functions to quantify the error:
\begin{equation}\label{eq5.1.2}
e_1\left(t_n\right)=\left\|\psi\left(x, t_n\right)-I_N \psi^n\right\|_{\alpha/2}, \quad e_{1, \max }\left(t_n\right)=\max _{0 \leq m \leq n} e_1\left(t_m\right).
\end{equation}

\begin{figure}[htb]
\centering
\includegraphics[scale=.34]{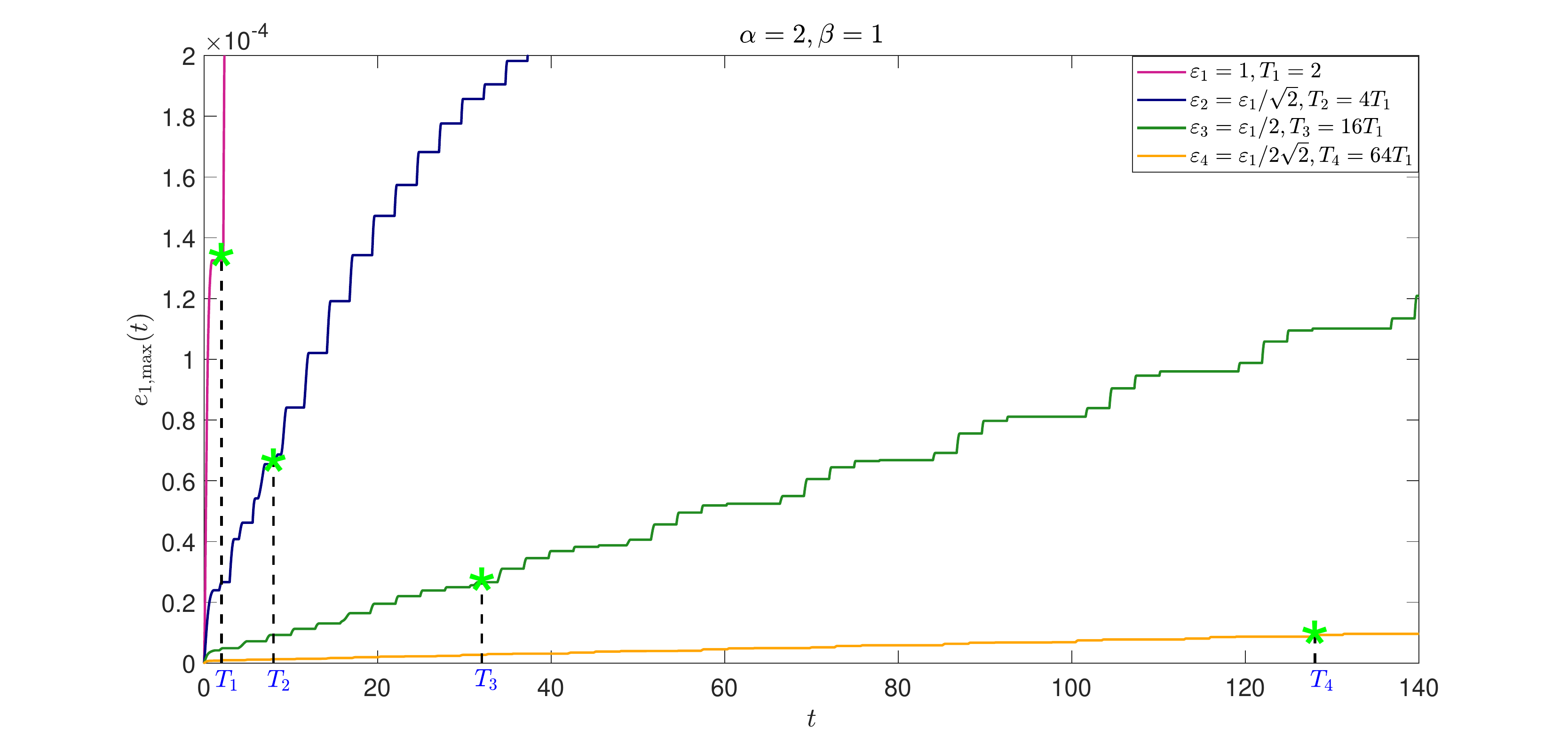}
\includegraphics[scale=.34]{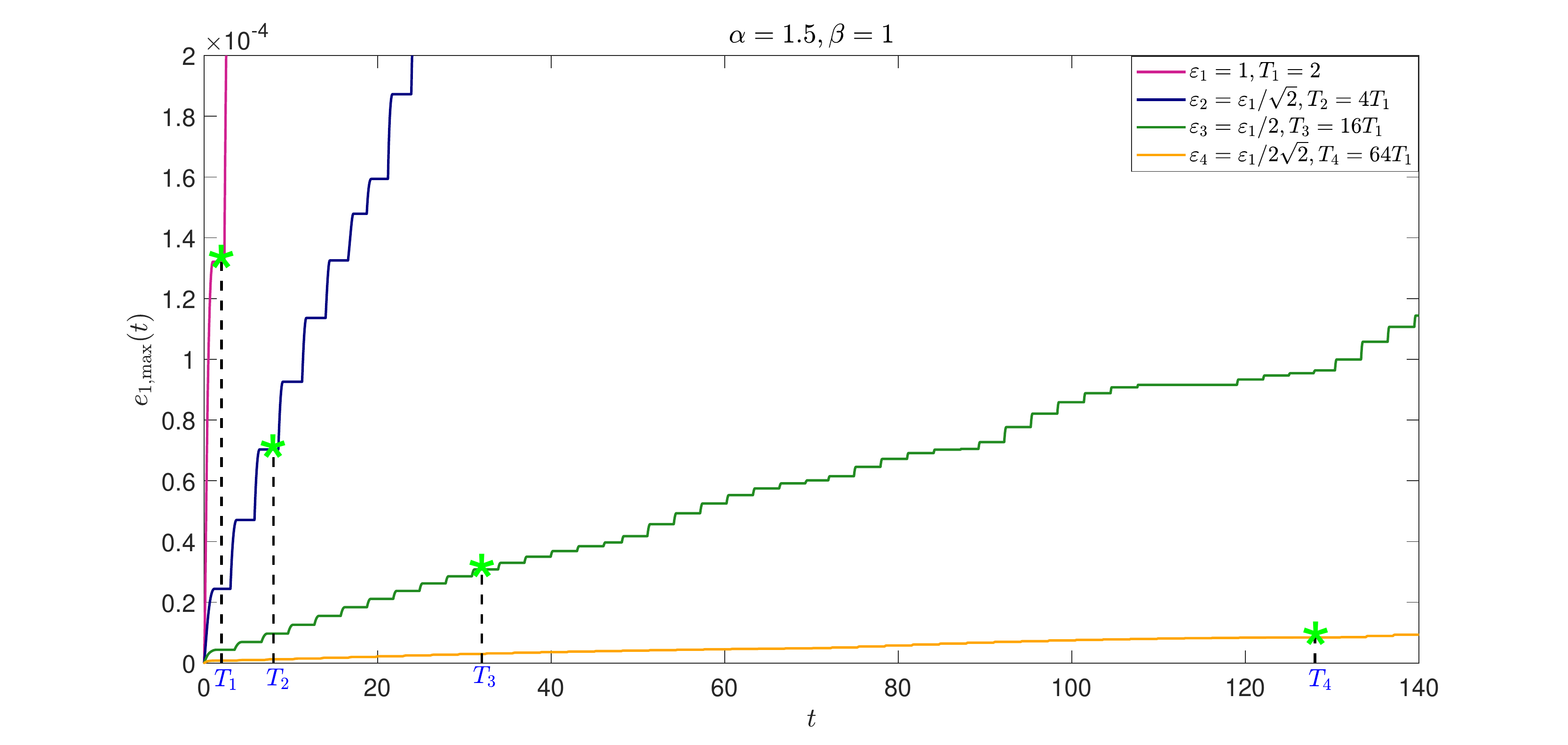}
\includegraphics[scale=.34]{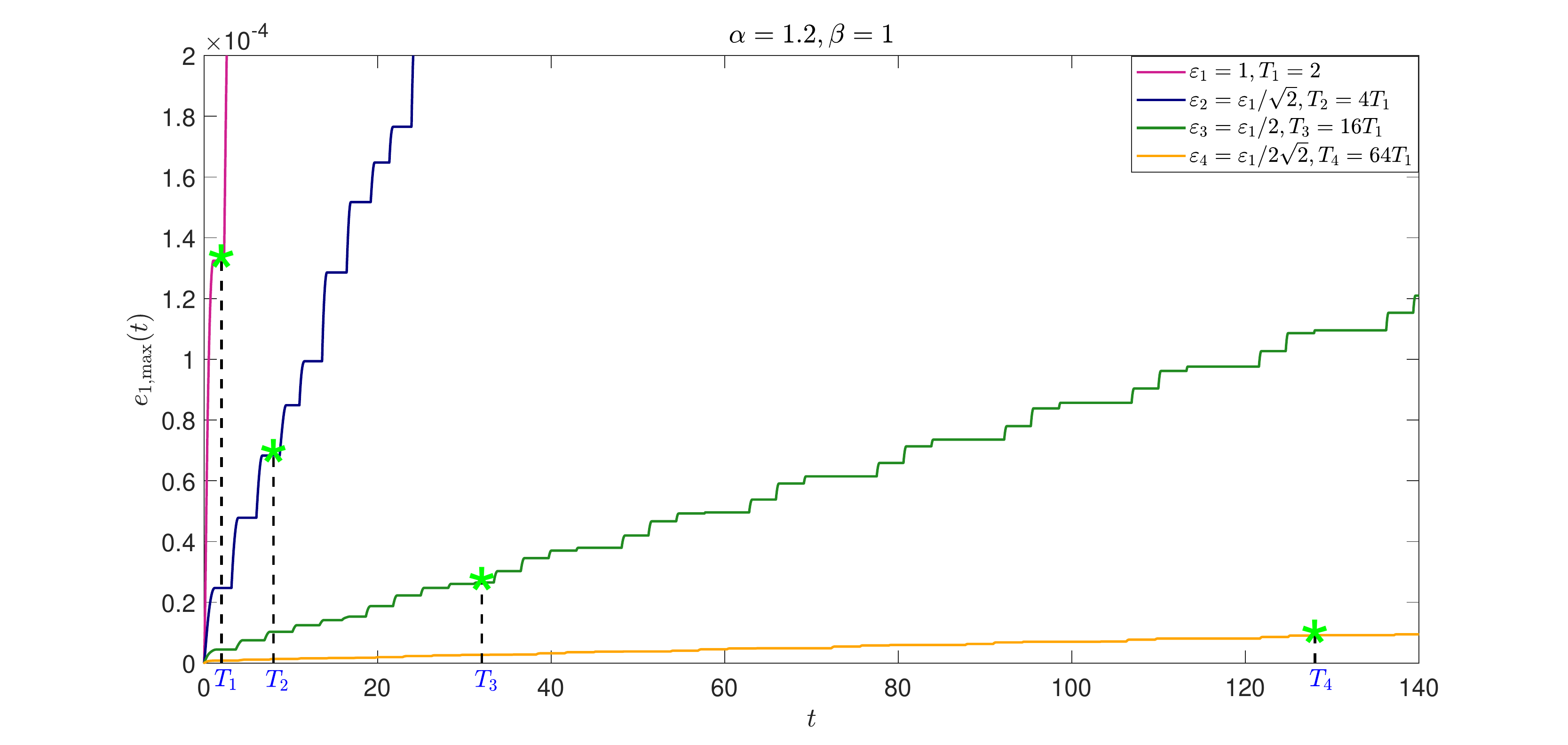}
\caption{Long-time temporal errors for the NSFKGE \eqref{eq4.1.1} with $p=2$ and different $\varepsilon$ in 1D when $\alpha$ is taken as 2, 1.5 and 1.2 respectively.}
\label{fig5.1}
\end{figure}

\clearpage
\begin{figure}[htb]
\centering
\includegraphics[scale=.34]{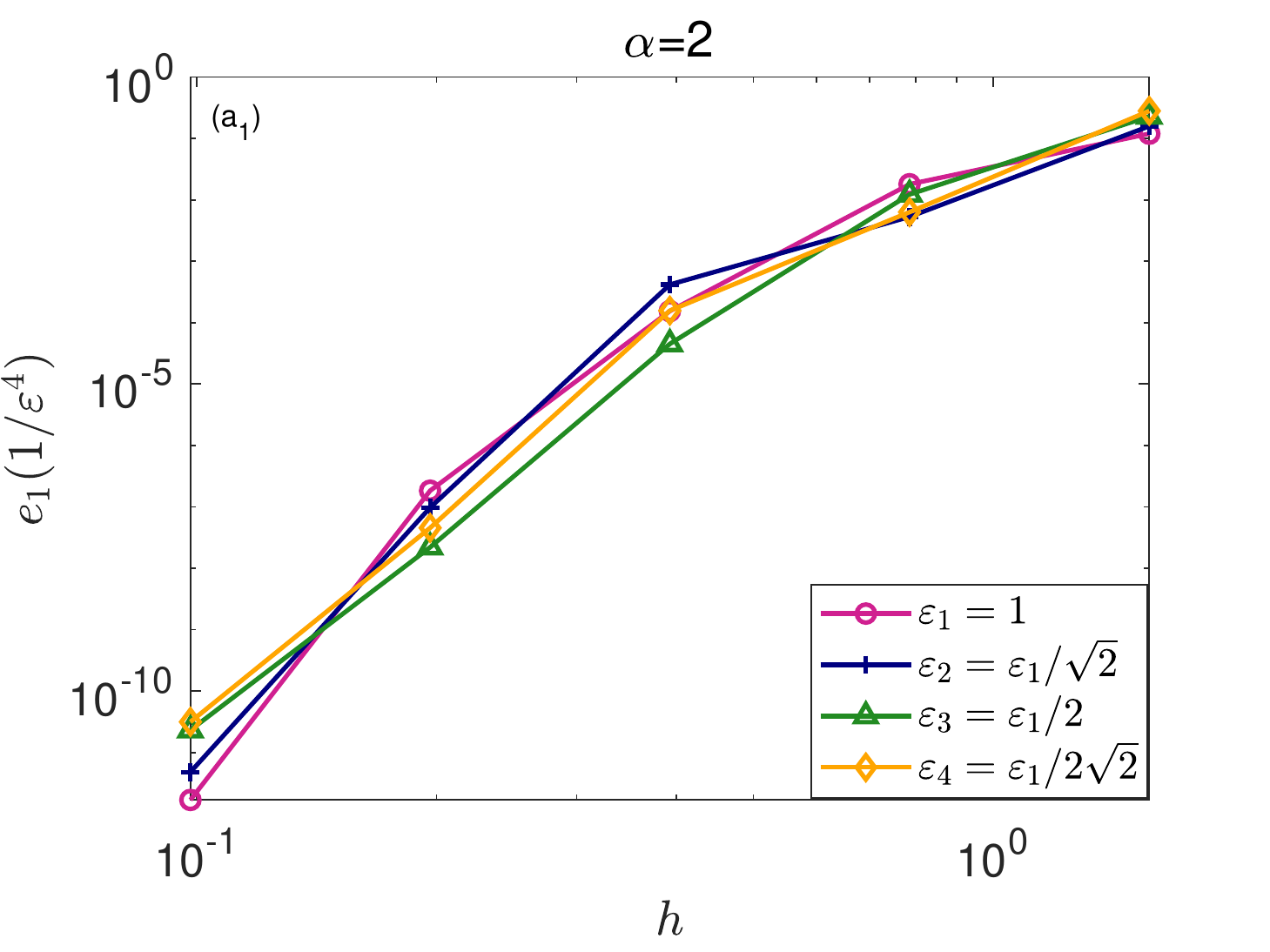}
\includegraphics[scale=.34]{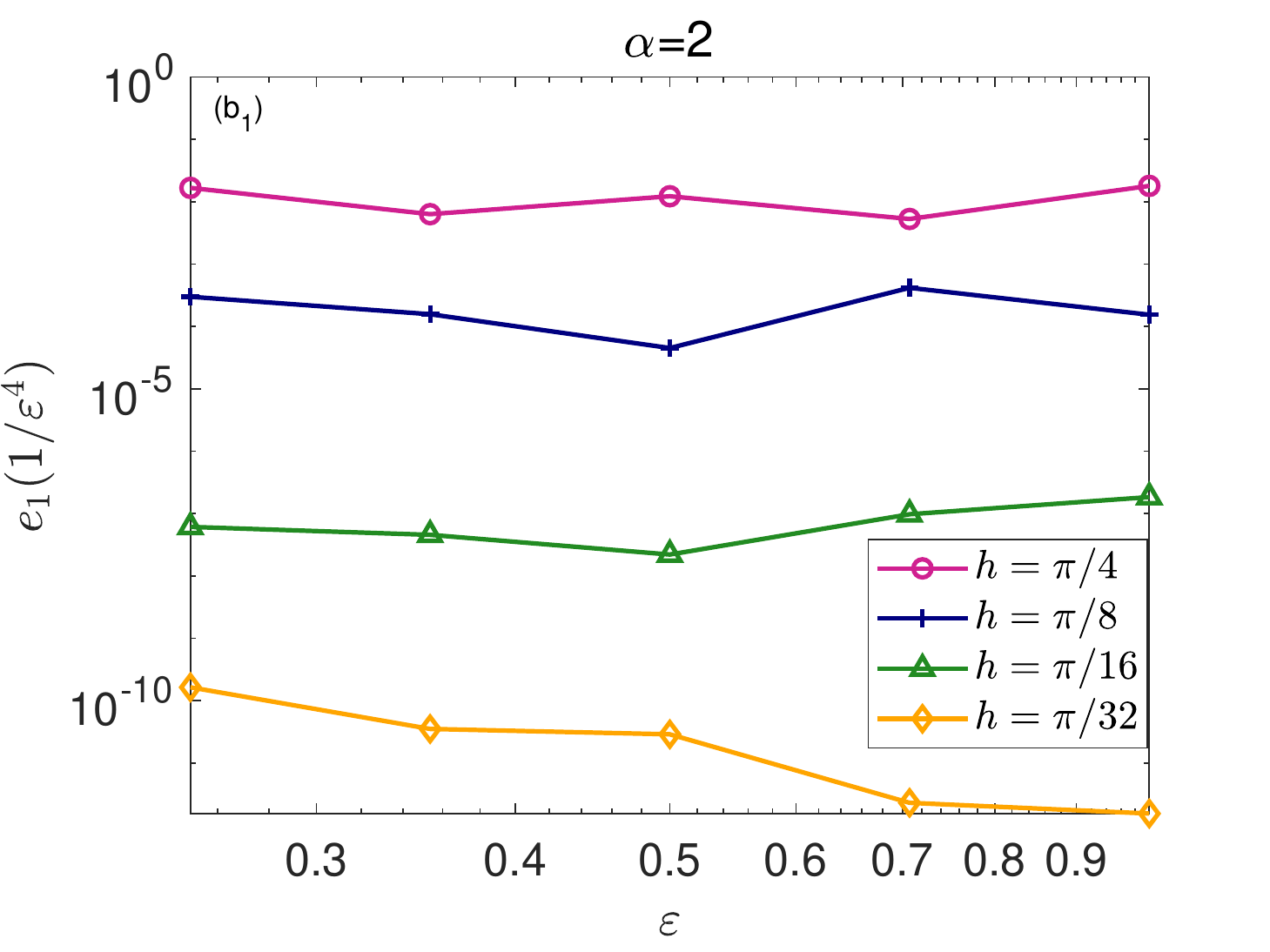}
\includegraphics[scale=.34]{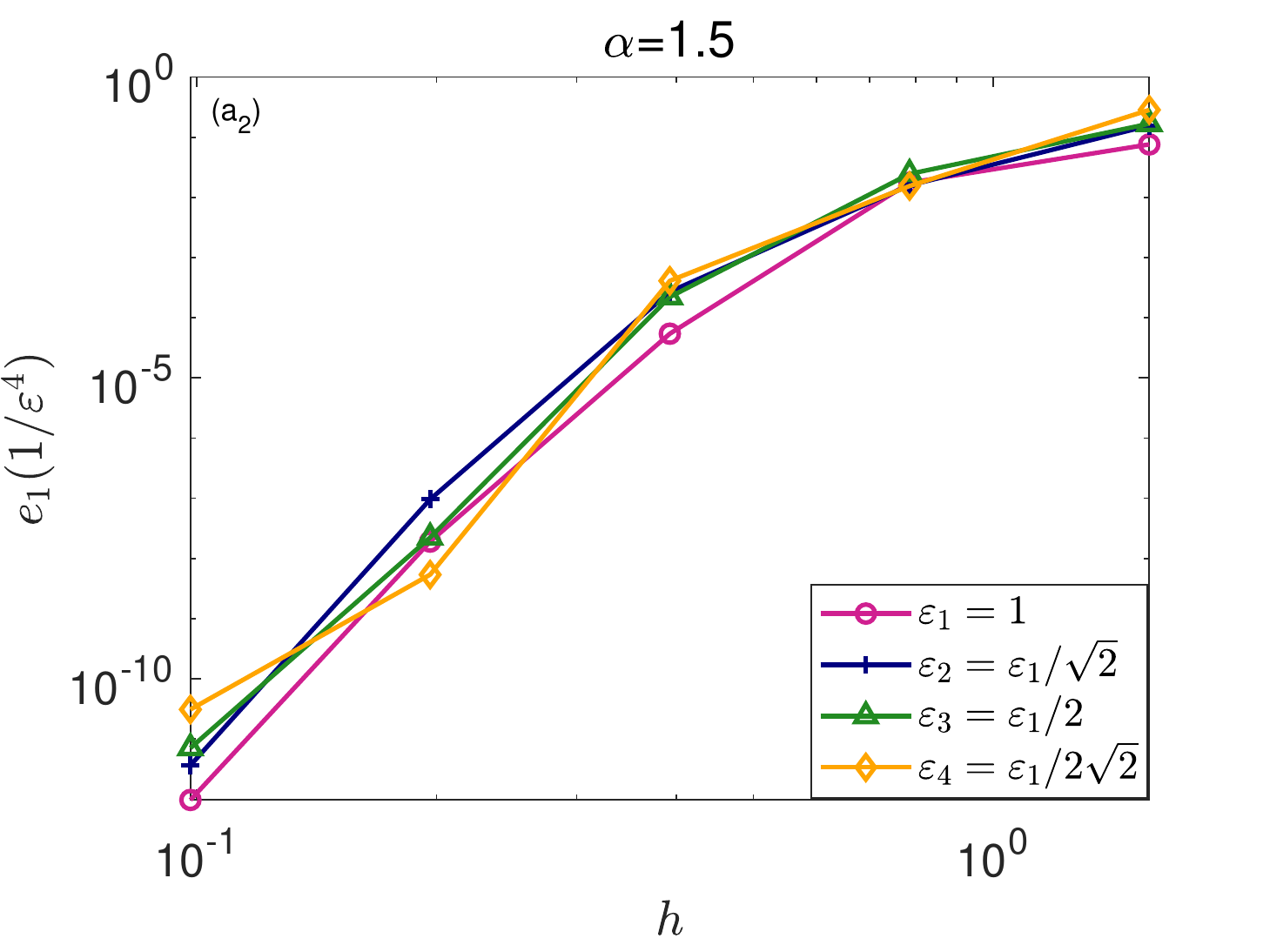}
\includegraphics[scale=.34]{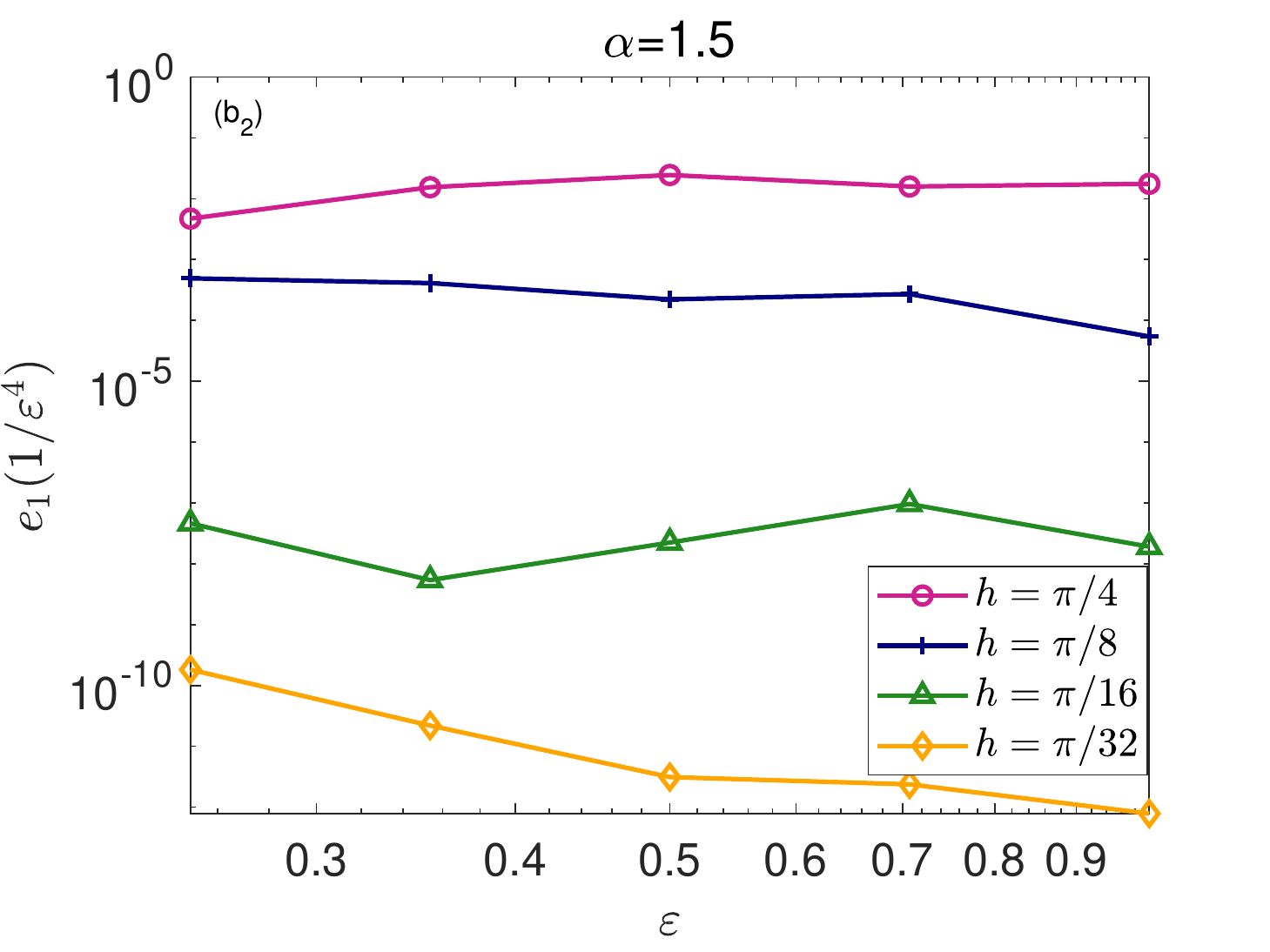}
\includegraphics[scale=.34]{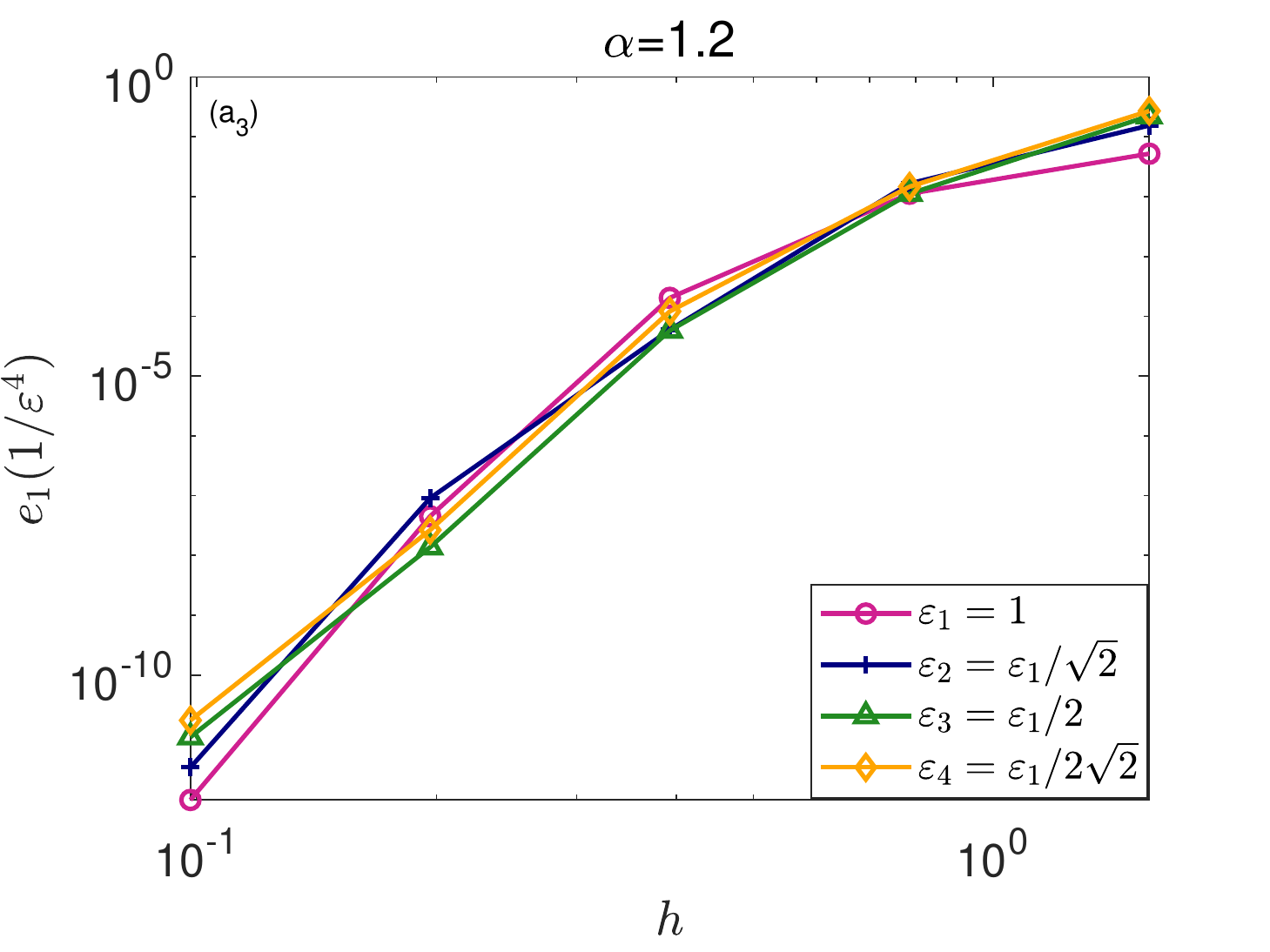}
\includegraphics[scale=.34]{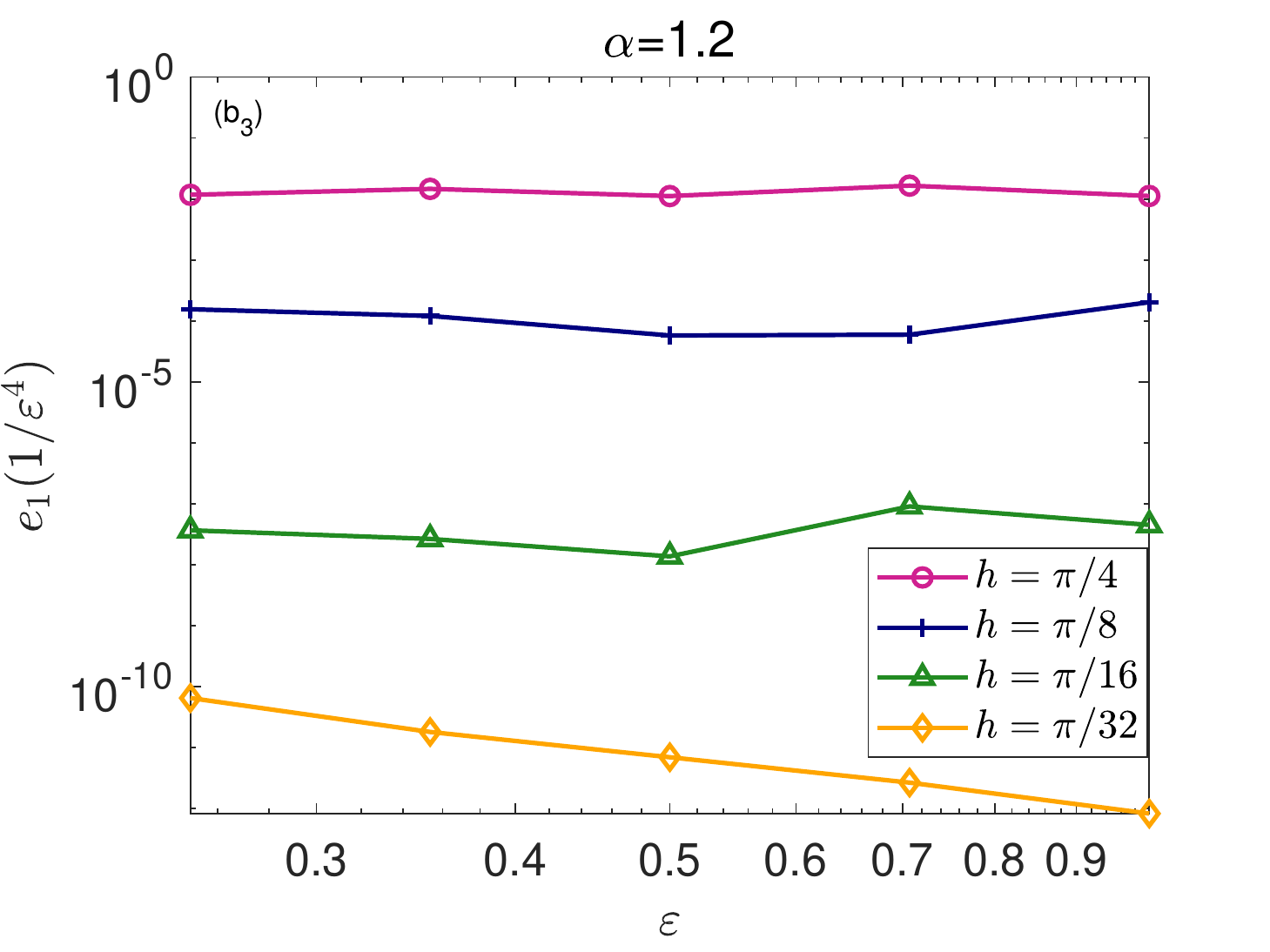}
\caption{Long-time spatial errors for the NSFKGE \eqref{eq4.1.1} with $p=2$ in 1D when $\alpha$ is taken as 2, 1.5 and 1.2 at $t=1/{\varepsilon^{4}}$ respectively.}
\label{fig5.2}
\end{figure}

\begin{figure}[htb]
\centering
\includegraphics[scale=.34]{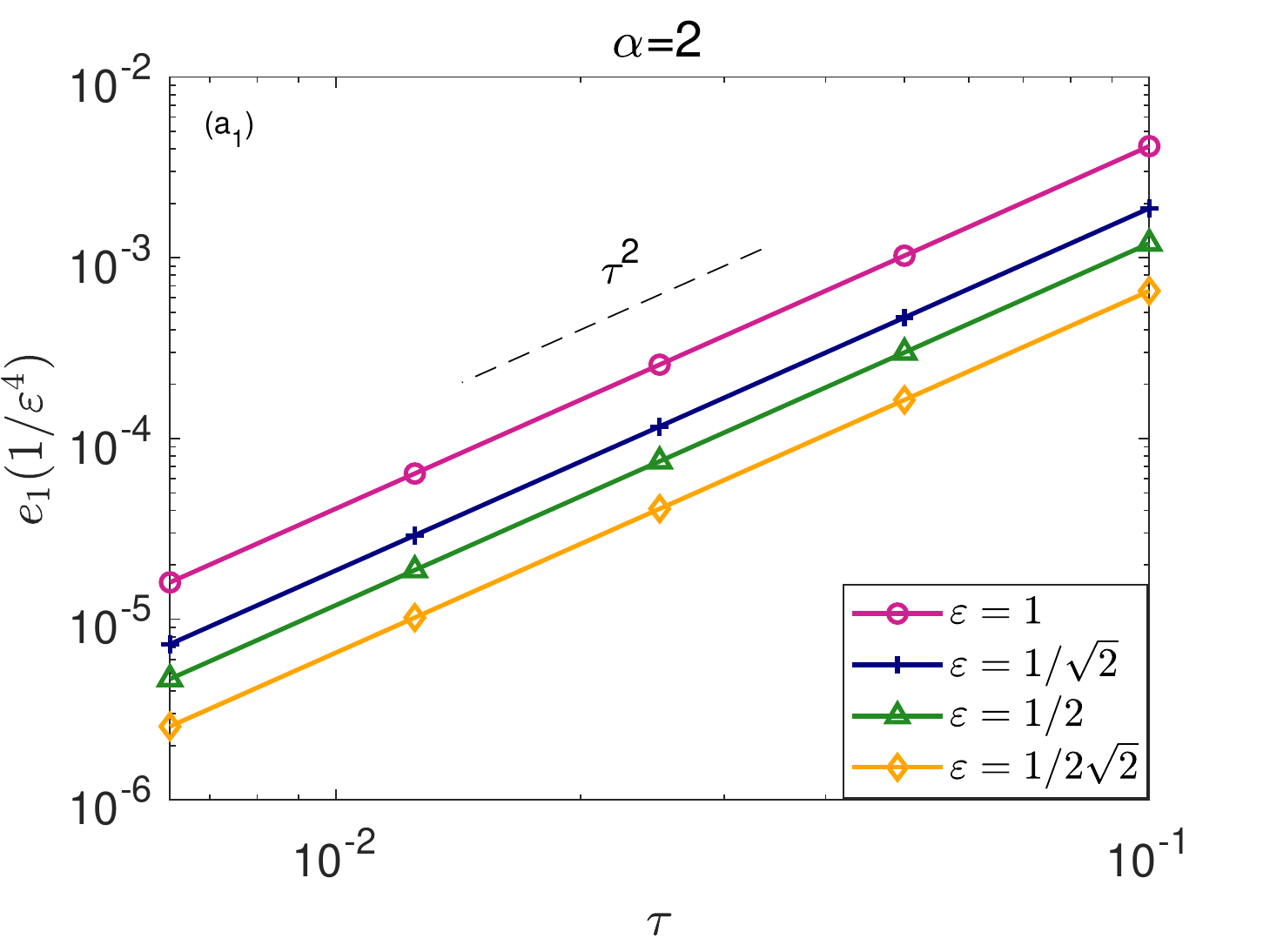}
\includegraphics[scale=.34]{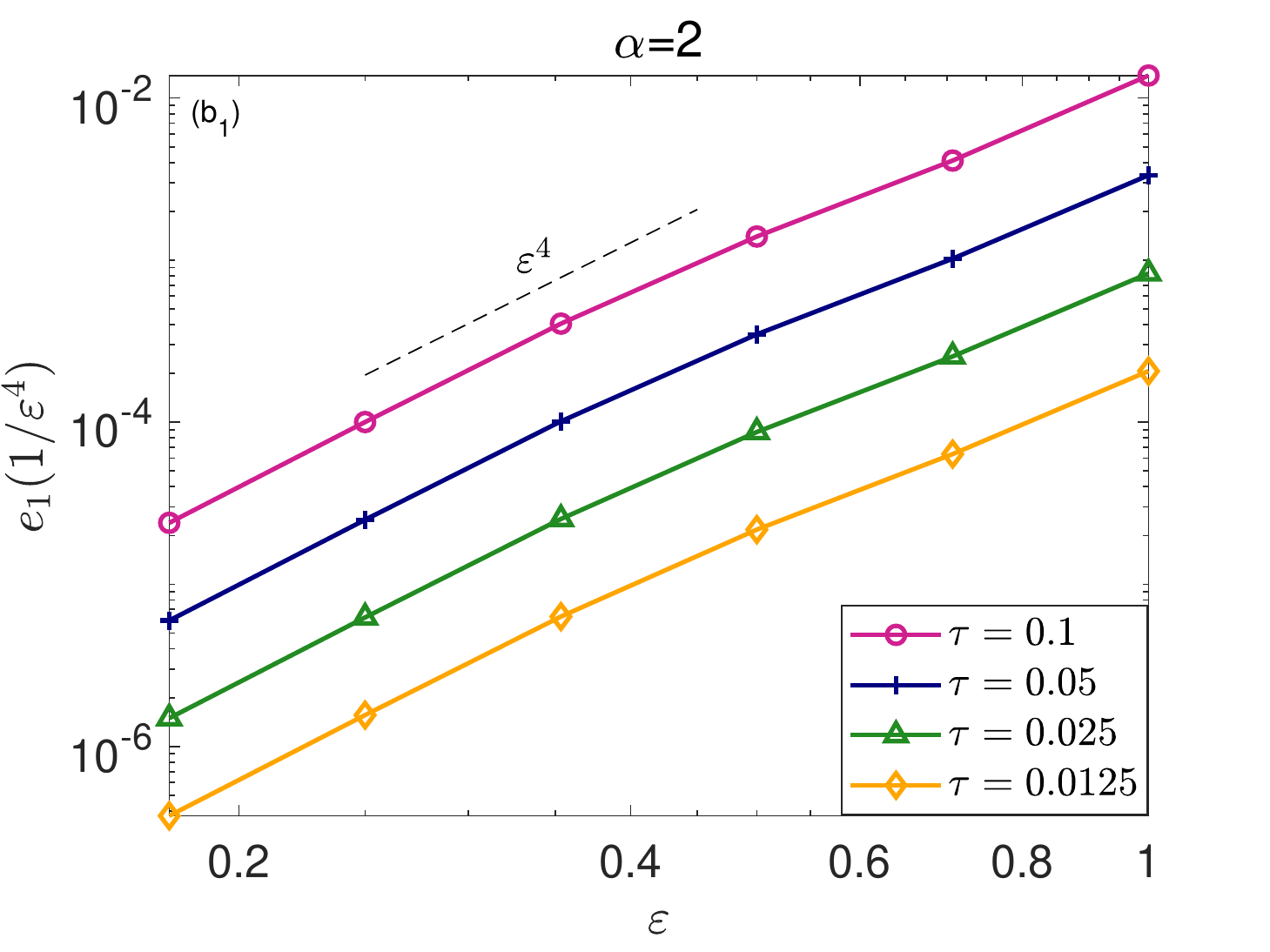}
\includegraphics[scale=.34]{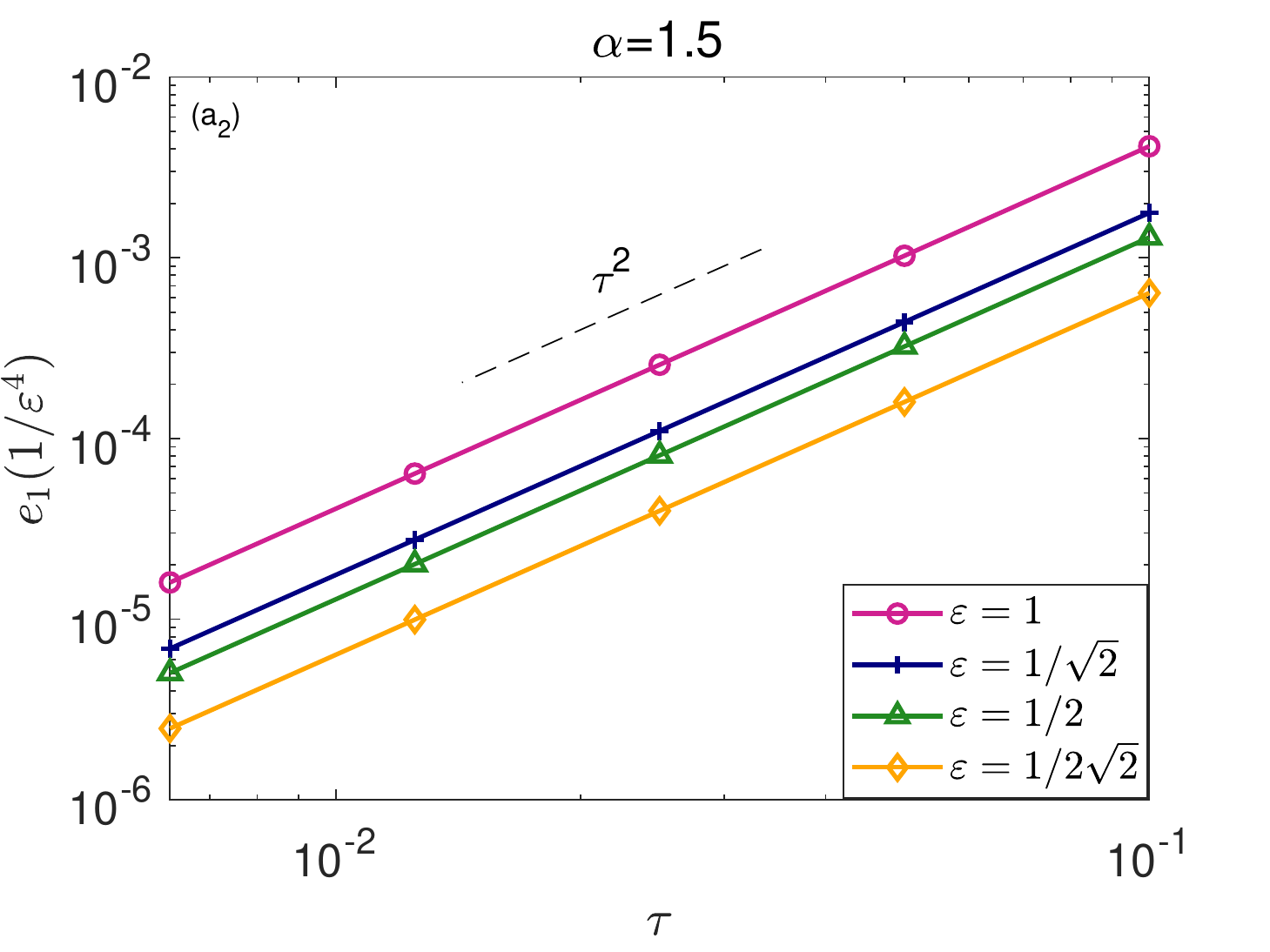}
\includegraphics[scale=.34]{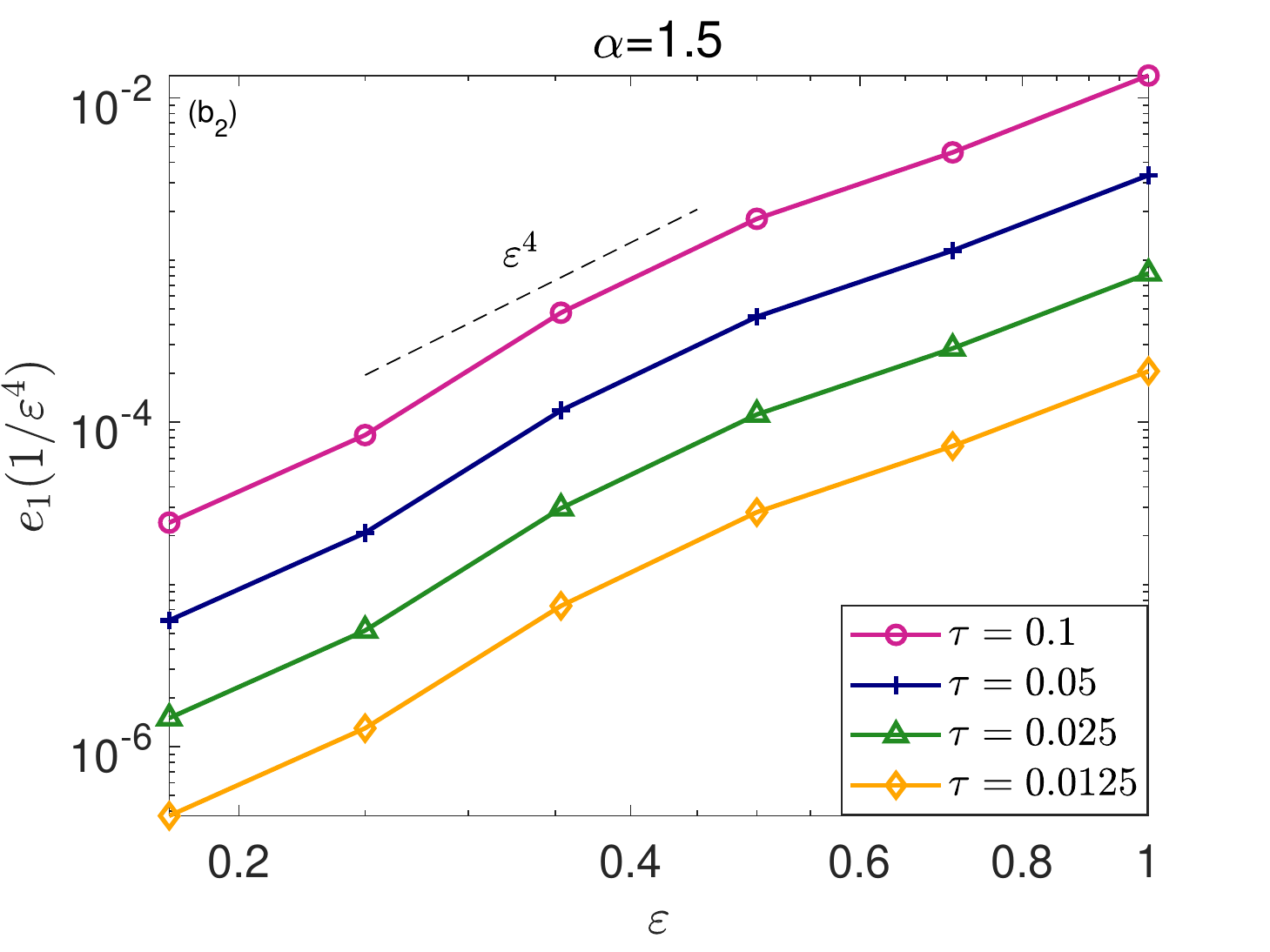}
\includegraphics[scale=.34]{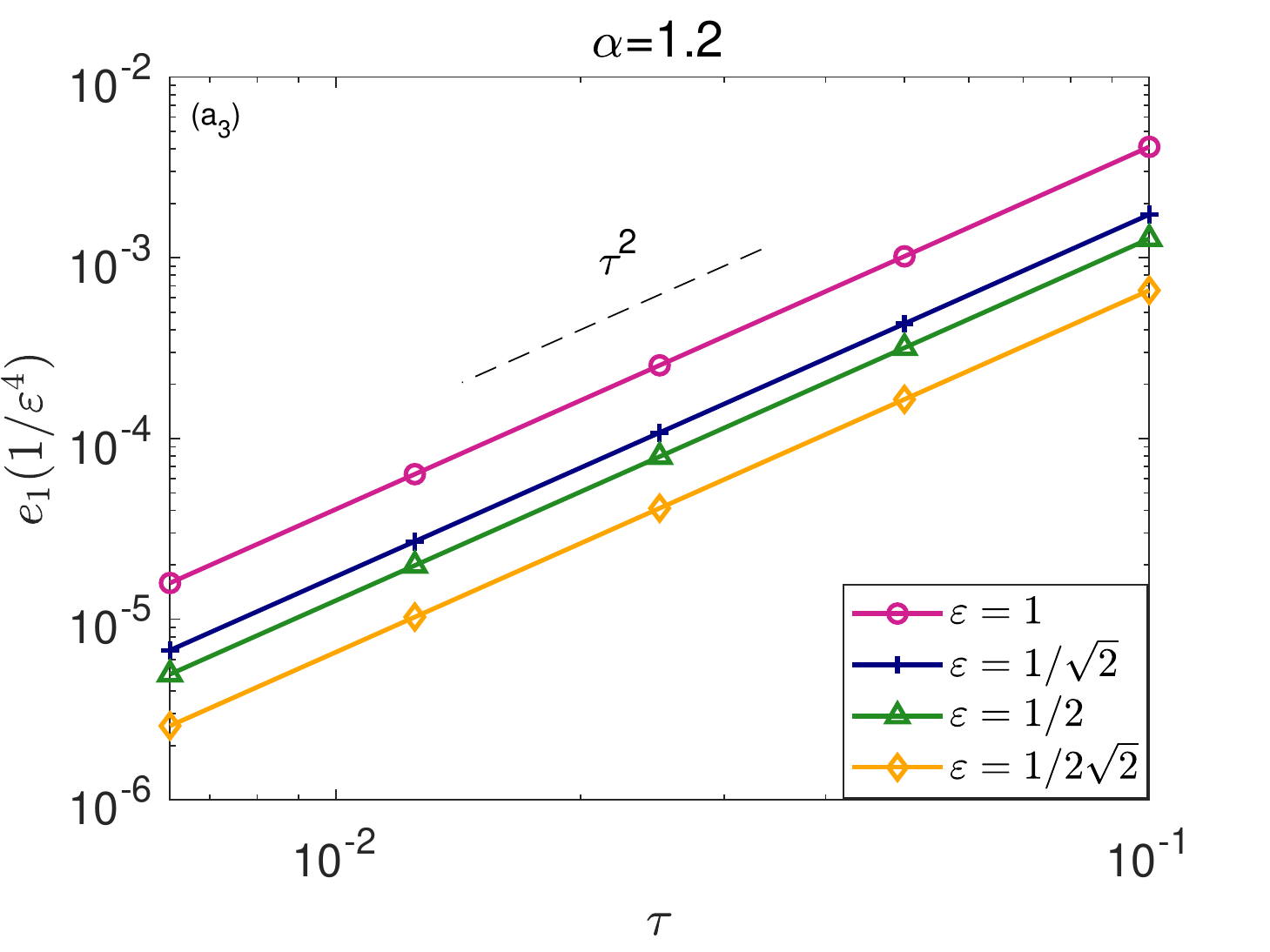}
\includegraphics[scale=.34]{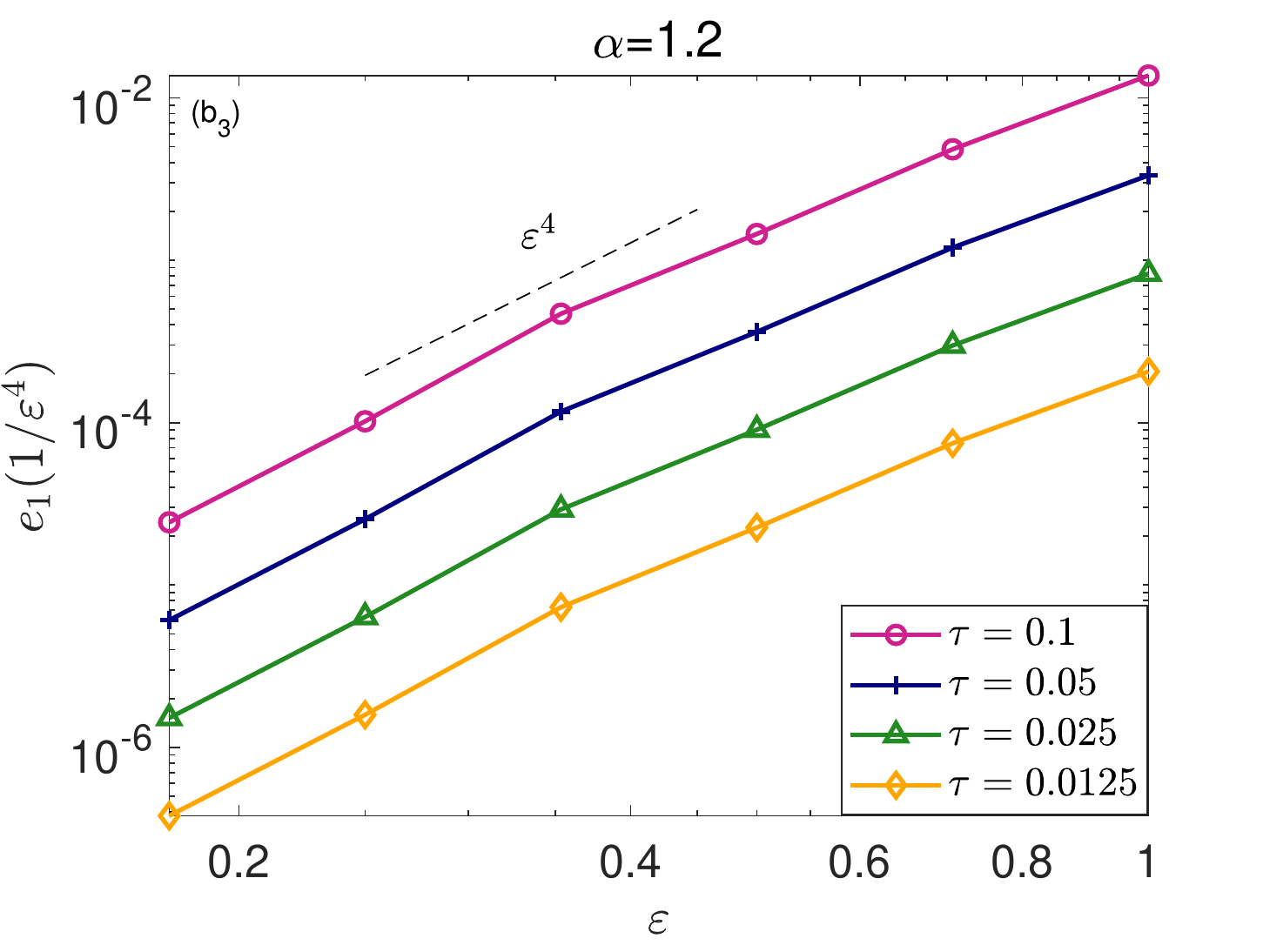}
\caption{Long-time temporal errors for the NSFKGE \eqref{eq4.1.1} with $p=2$ in 1D when $\alpha$ is taken as 2, 1.5 and 1.2 at $t=1/{\varepsilon^{4}}$ respectively.}
\label{fig5.3}
\end{figure}

\clearpage
\begin{figure}[htb]
\centering
\includegraphics[scale=.35]{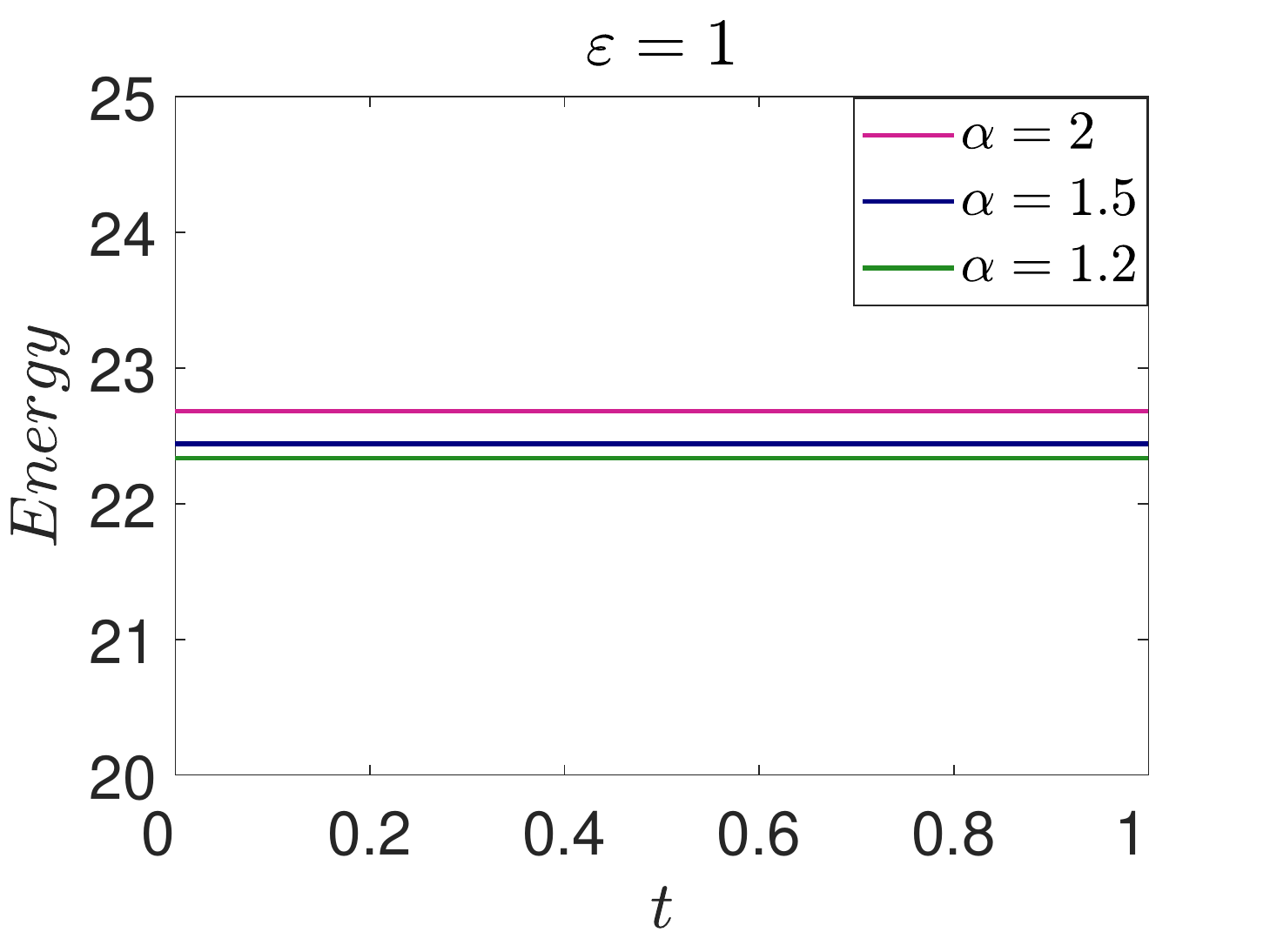}
\includegraphics[scale=.35]{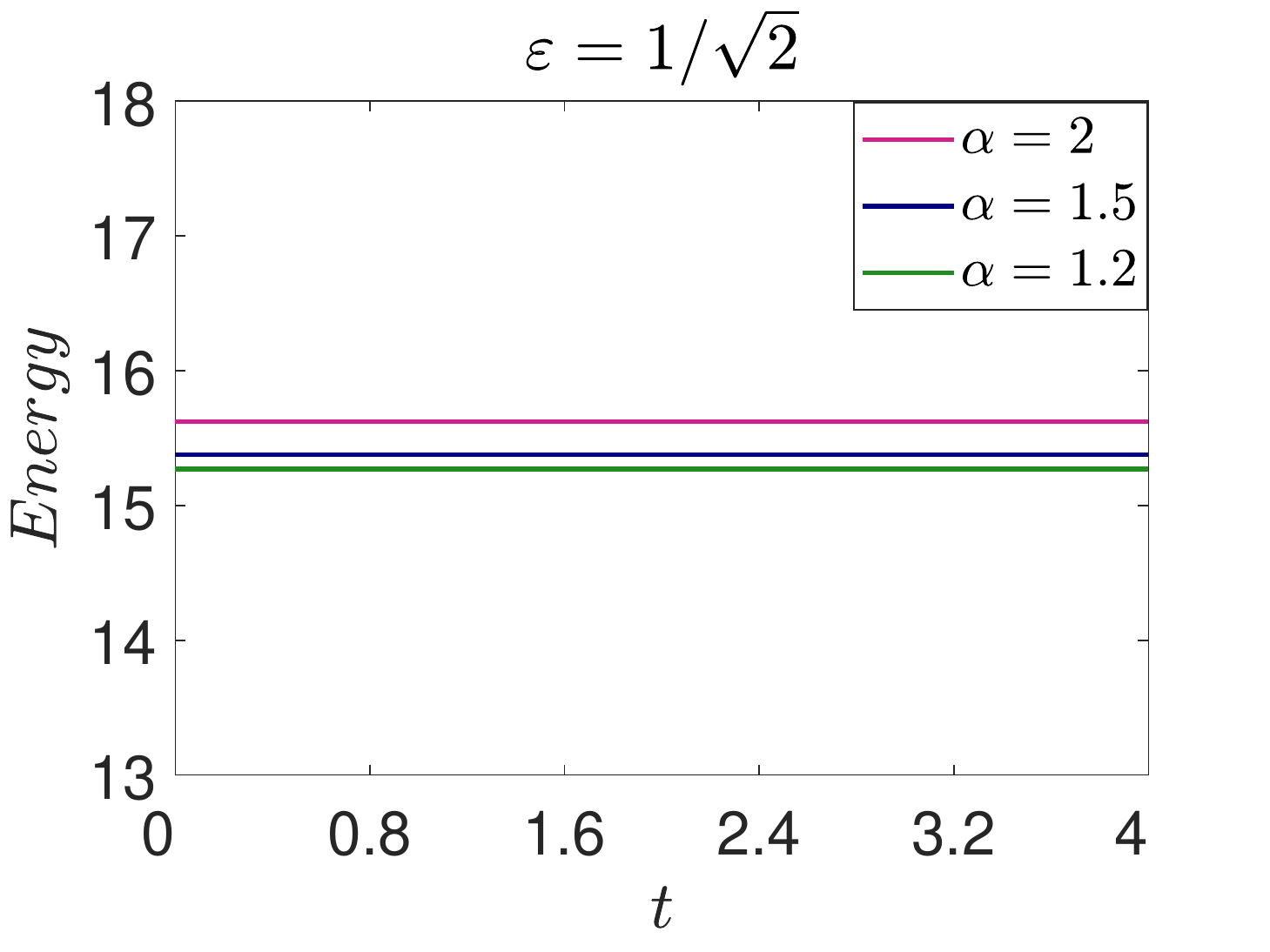}
\includegraphics[scale=.35]{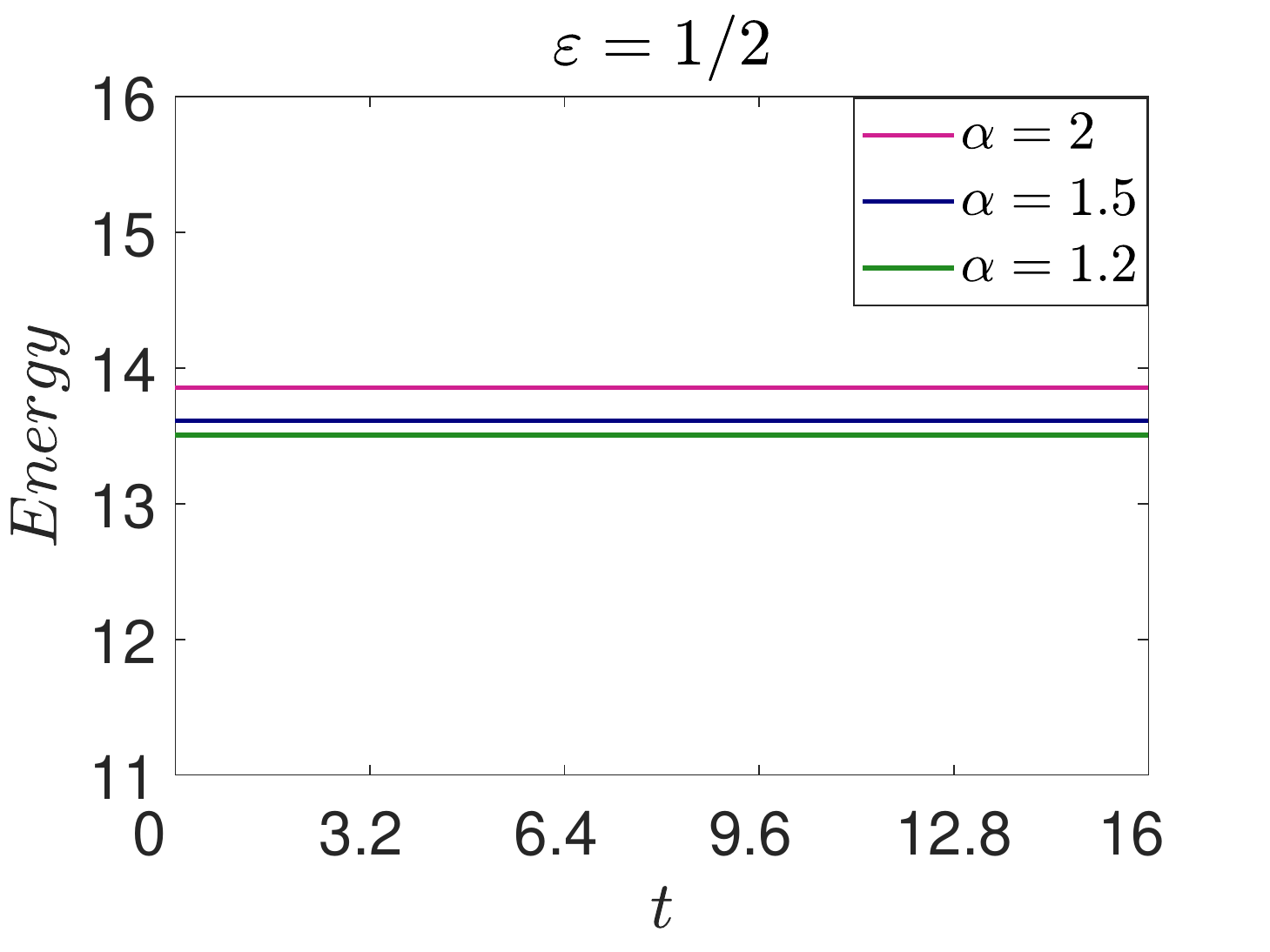}
\includegraphics[scale=.35]{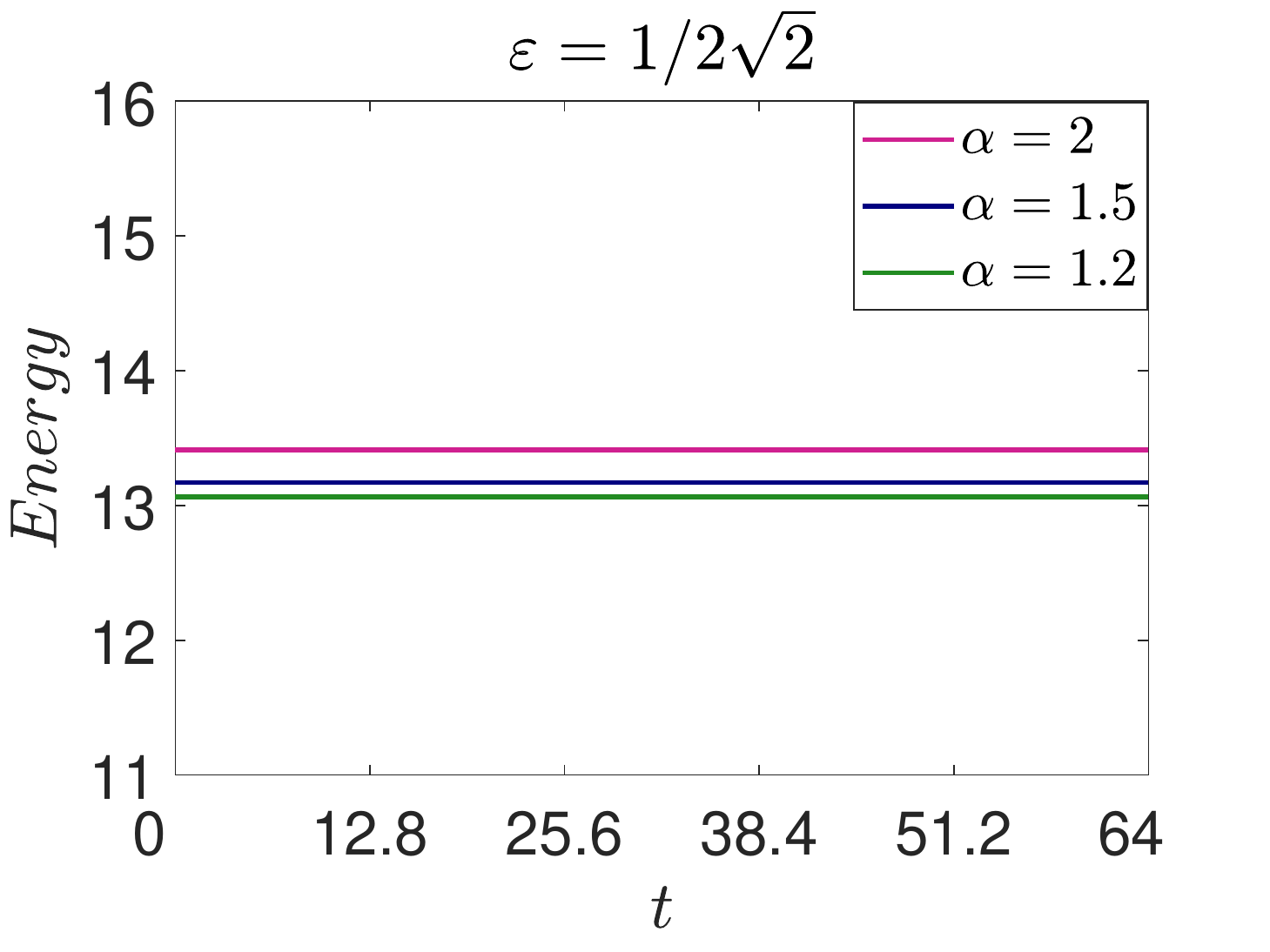}
\caption{Energy evolutions for the NSFKGE \eqref{eq4.1.1} with $p=2$ in 1D when $\alpha$ is taken as 2, 1.5 and 1.2.}
\label{fig5.4}
\end{figure}

Fig. \ref{fig5.1} displays the long-time temporal errors for the NSFKGE \eqref{eq4.1.1} with  fixed time step $\tau$ (here we take $\tau=10^{-2}$) and different $\varepsilon$ in 1D when $\alpha$ is taken as 2, 1.5 and 1.2 respectively. It indicates that the improved uniform error bounds in $H^{\alpha/2}-$norm are $O(\varepsilon^{4}\tau^{2})$ up to time $O(1/\varepsilon^{4})$, and the fractional index $\alpha$ will not affect the result. Fig. \ref{fig5.2} presents long-time spatial errors for the NSFKGE \eqref{eq4.1.1} in 1D when $\alpha$ is taken as 2, 1.5 and 1.2 at $t=1/{\varepsilon^{4}}$ respectively. Fig. \ref{fig5.2}$(a_{1})-(a_{3})$  indicates the spectral accuracy for the NSFKGE \eqref{eq4.1.1} in space, and Fig. \ref{fig5.2}$(b_{1})-(b_{3})$ illustrates that
the small parameter $\varepsilon$ has no effect on the spatial errors. Fig. \ref{fig5.3}$(a_{1})-(a_{3})$  shows that there is second-order convergence accuracy in time, Fig. \ref{fig5.3}$(b_{1})-(b_{3})$  further validates that the $H^{\alpha/2}$-norm errors behave like $O\left(\varepsilon^4 \tau^2\right)$ up to the time at $t=1/{\varepsilon^{4}}$. From Fig. \ref{fig5.4}, we can observe that the numerical method is energy conservation.

\subsection{The long-time dynamics in 2D}
\label{sec:5.2}
In this subsection, we show a $2 \mathrm{D}$ example with $p=1$, the domain $(x, y) \in \Omega=(0,1) \times(0,2 \pi)$ and the initial conditions are
\begin{equation}
\label{eq5.2.1}
\begin{aligned}
\psi_0(x, y)=\frac{2}{1+\cos ^2(2 \pi x+y)}, \quad \psi_1(x, y)=\frac{3}{2+2 \cos ^2(2 \pi x+y)}
\end{aligned}
\end{equation}

\begin{figure}[htb]
	\centering
    \subfigure[$T=0,\alpha=2$]{
		\begin{minipage}[t]{0.22\linewidth}
			\centering
			\includegraphics[width=1.2in]{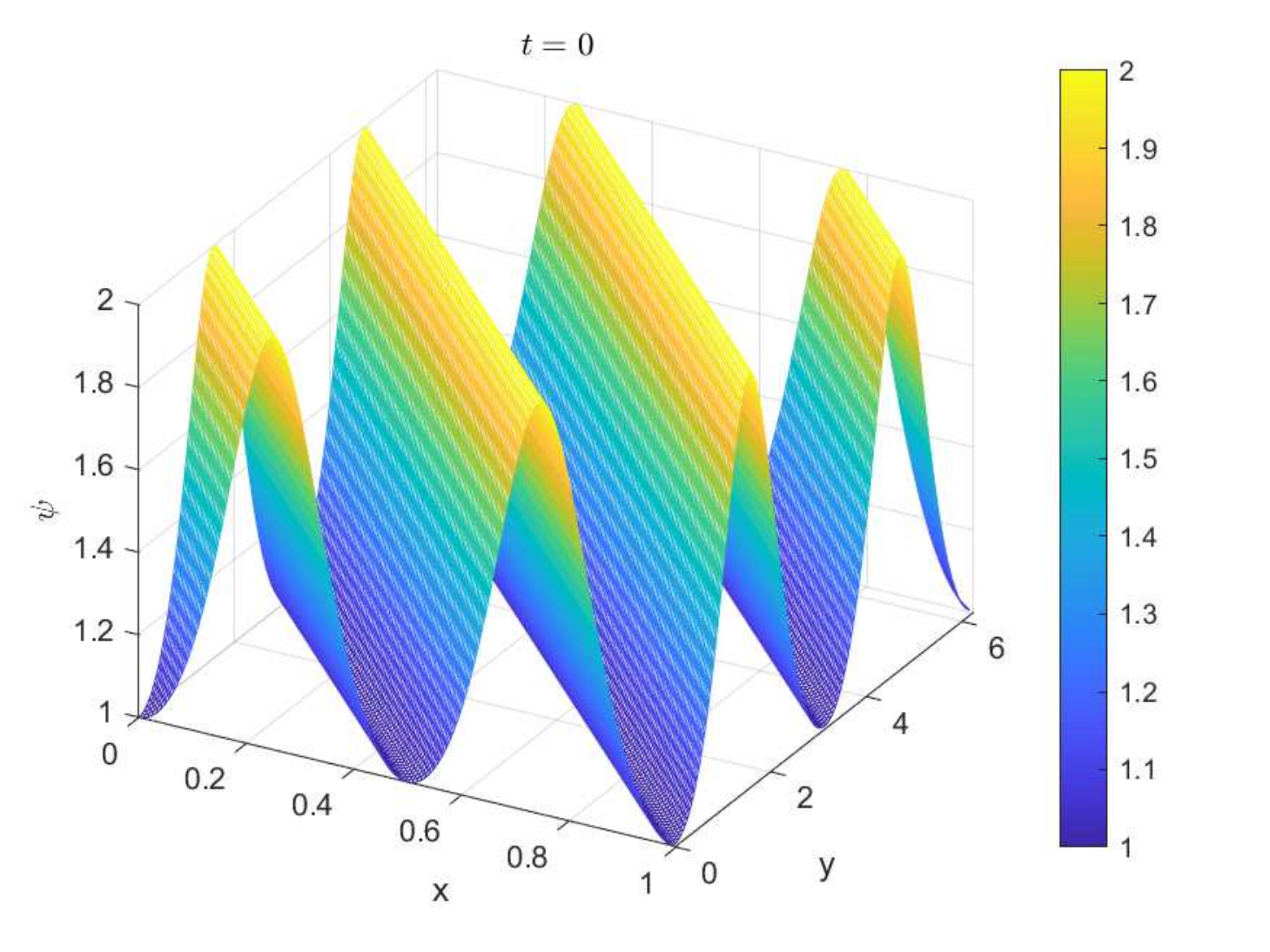}
	\end{minipage}}
	\subfigure[$T=0,\alpha=1.7$]{
		\begin{minipage}[t]{0.22\linewidth}
			\centering
			\includegraphics[width=1.2in]{f_2D_t-eps-converted-to}
	\end{minipage}}
	\subfigure[$T=0,\alpha=1.4$]{
		\begin{minipage}[t]{0.22\linewidth}
			\centering
			\includegraphics[width=1.2in]{f_2D_t-eps-converted-to}
	\end{minipage}}
	\subfigure[$T=0,\alpha=1.1$]{
		\begin{minipage}[t]{0.22\linewidth}
			\centering
			\includegraphics[width=1.2in]{f_2D_t-eps-converted-to}
	\end{minipage}}
	\subfigure[$T=2,\alpha=2$]{
		\begin{minipage}[t]{0.22\linewidth}
			\centering
			\includegraphics[width=1.2in]{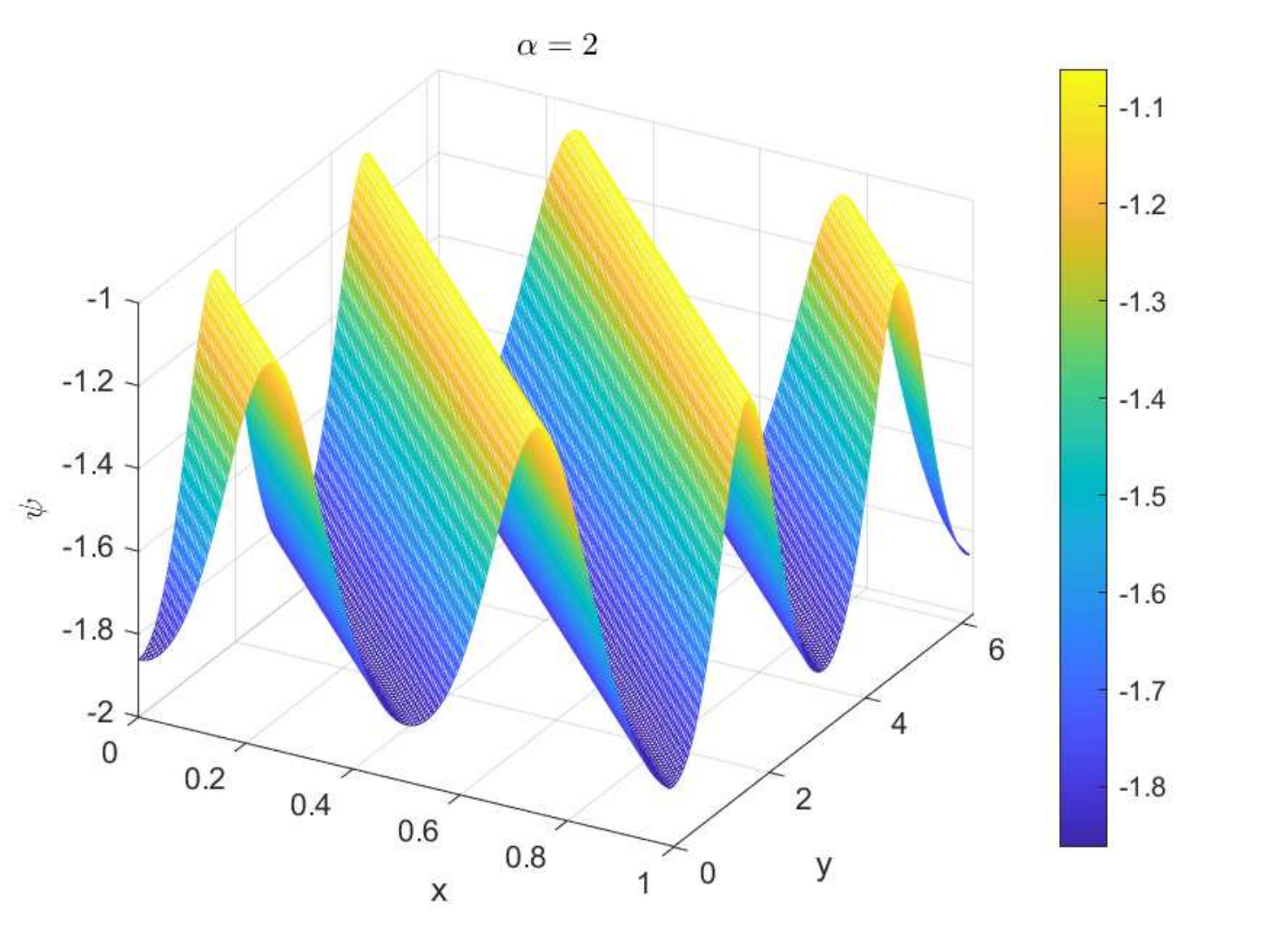}
	\end{minipage}}
	\subfigure[$T=2,\alpha=1.7$]{
		\begin{minipage}[t]{0.22\linewidth}
			\centering
			\includegraphics[width=1.2in]{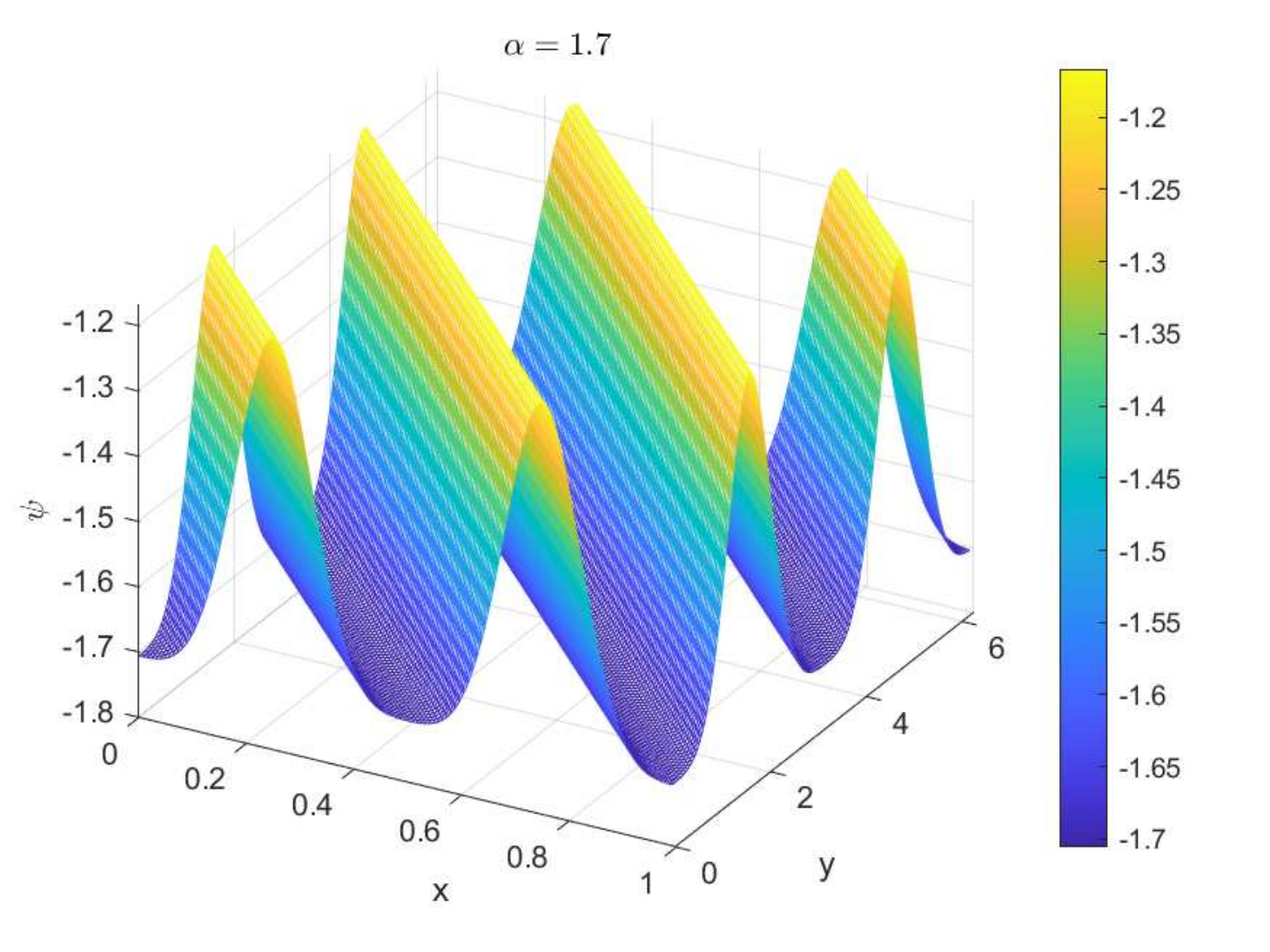}
	\end{minipage}}
	\subfigure[$T=2,\alpha=1.4$]{
		\begin{minipage}[t]{0.22\linewidth}
			\centering
			\includegraphics[width=1.2in]{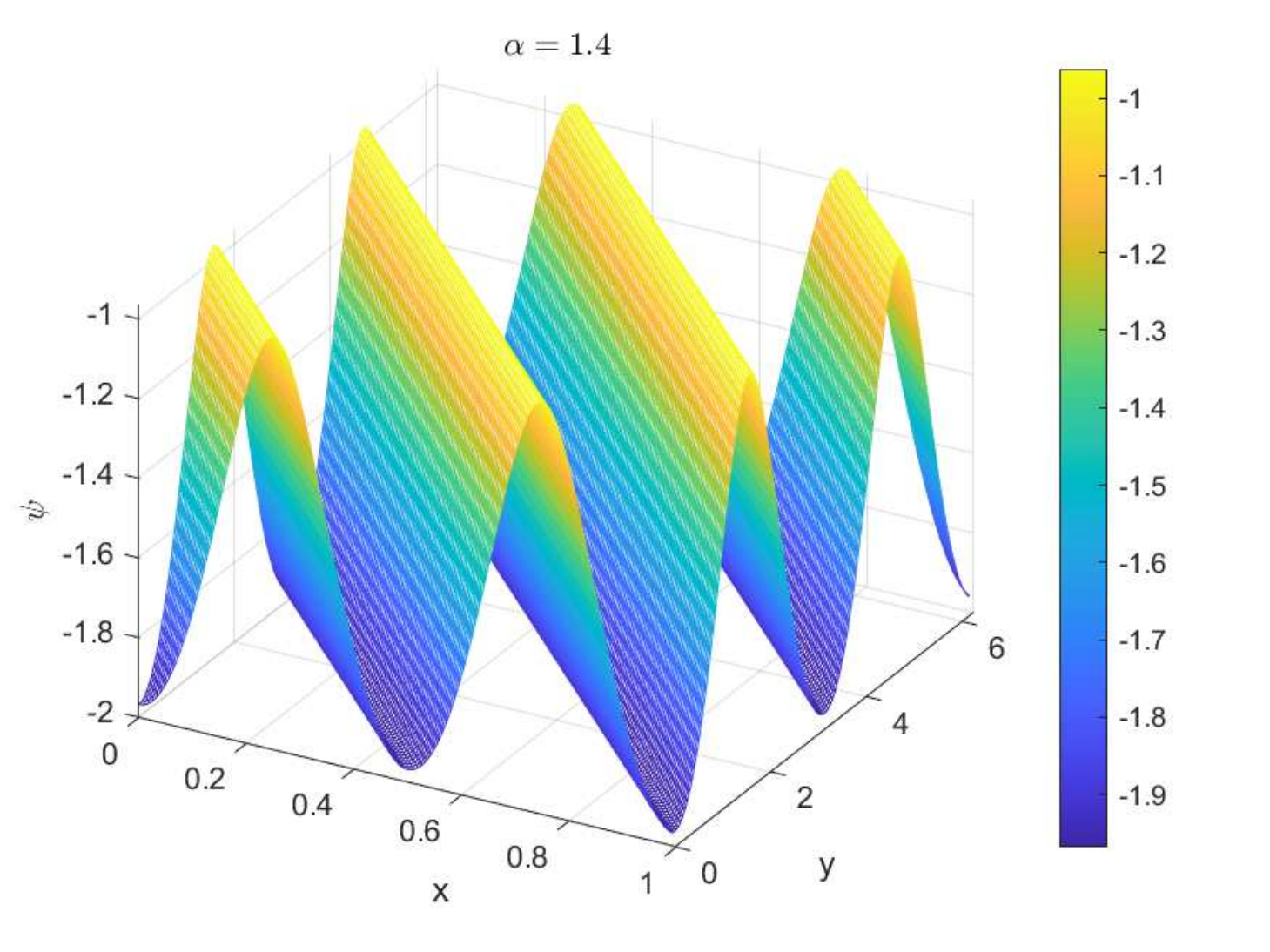}
	\end{minipage}}
	\subfigure[$T=2,\alpha=1.1$]{
		\begin{minipage}[t]{0.22\linewidth}
			\centering
			\includegraphics[width=1.2in]{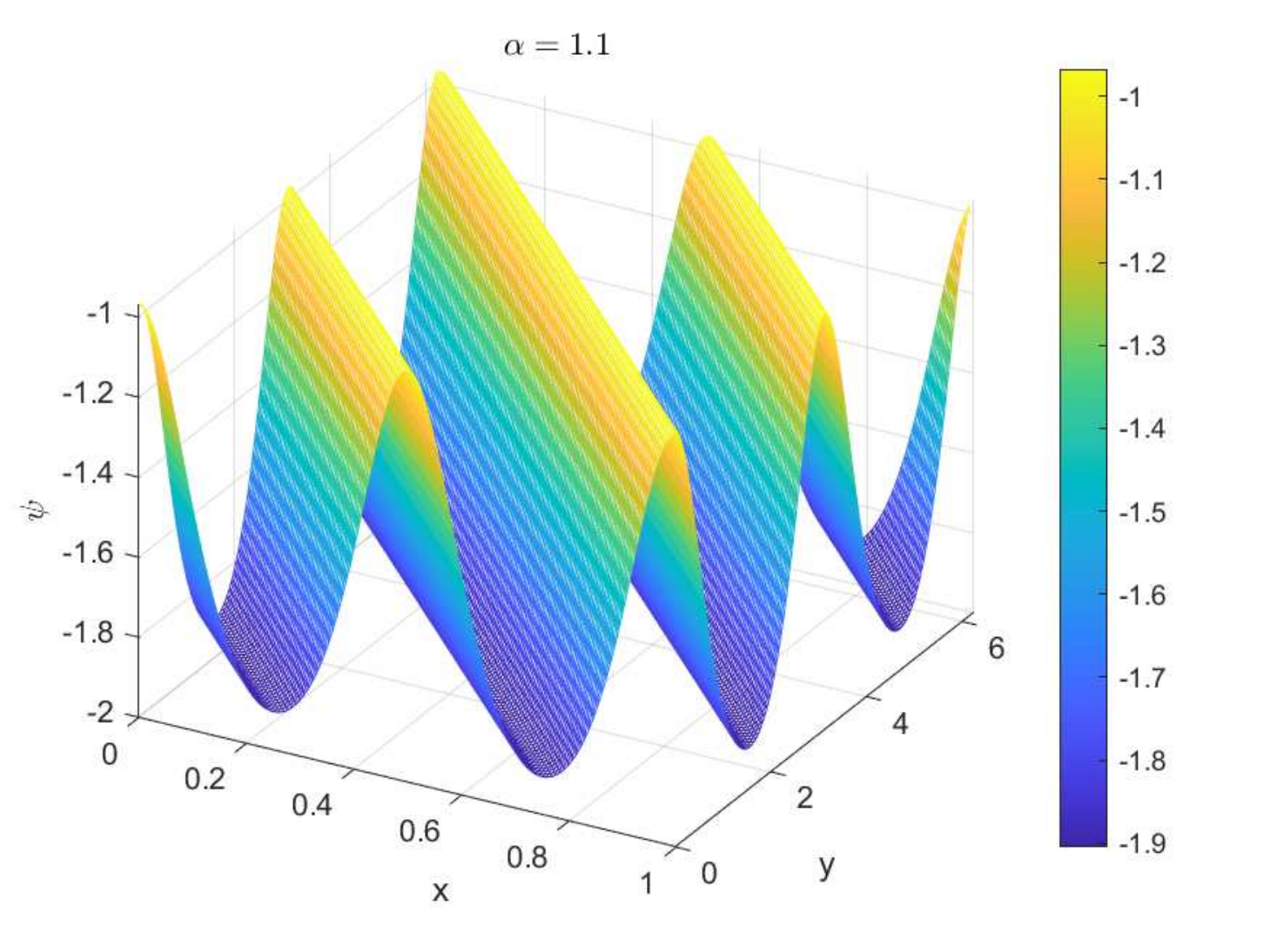}
	\end{minipage}}
	\subfigure [$T=8,\alpha=2$]{
		\begin{minipage}[t]{0.22\linewidth}
			\centering
			\includegraphics[width=1.2in]{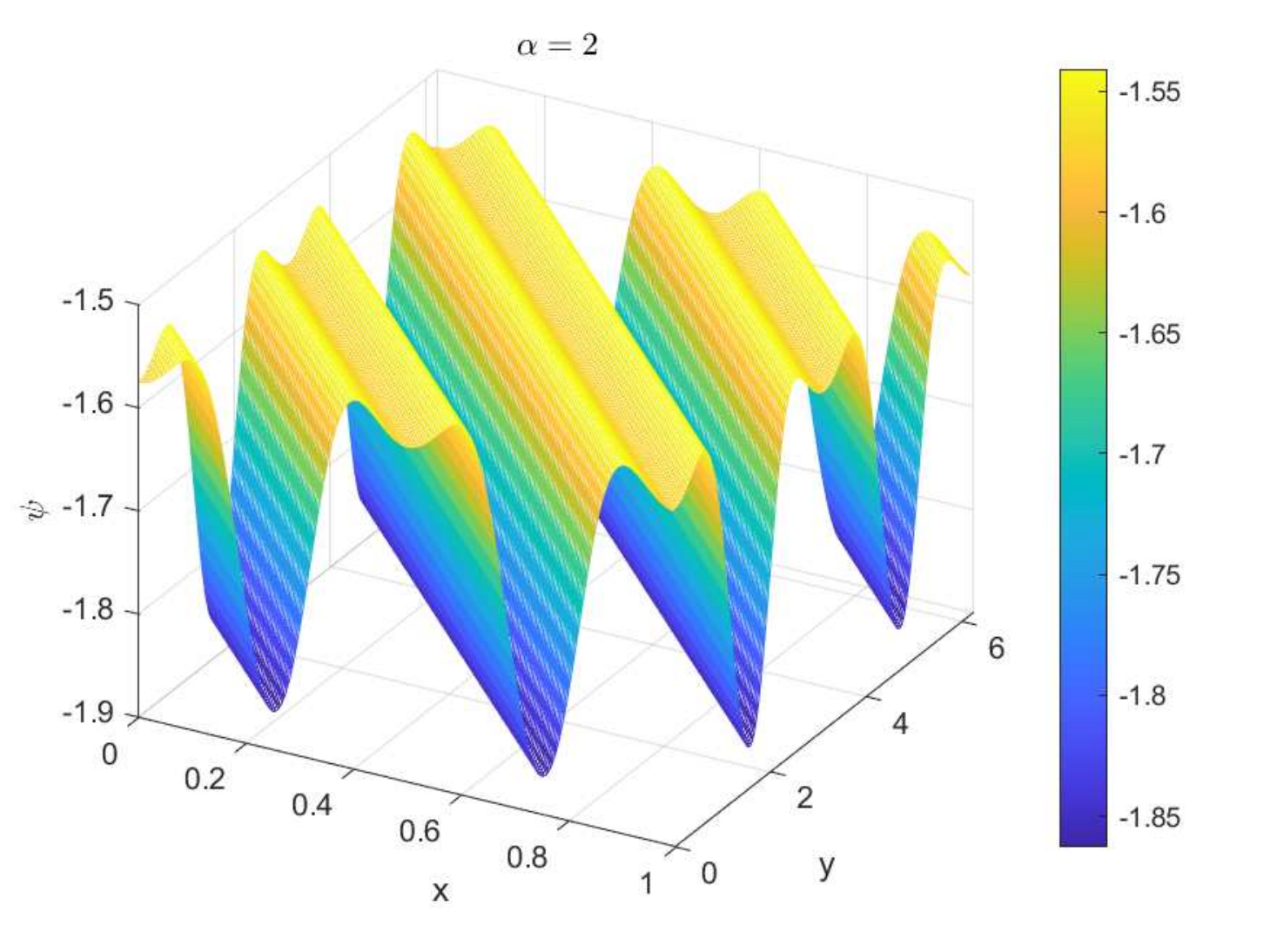}
	\end{minipage}}
	\subfigure[$T=8,\alpha=1.7$]{
		\begin{minipage}[t]{0.22\linewidth}
			\centering
			\includegraphics[width=1.2in]{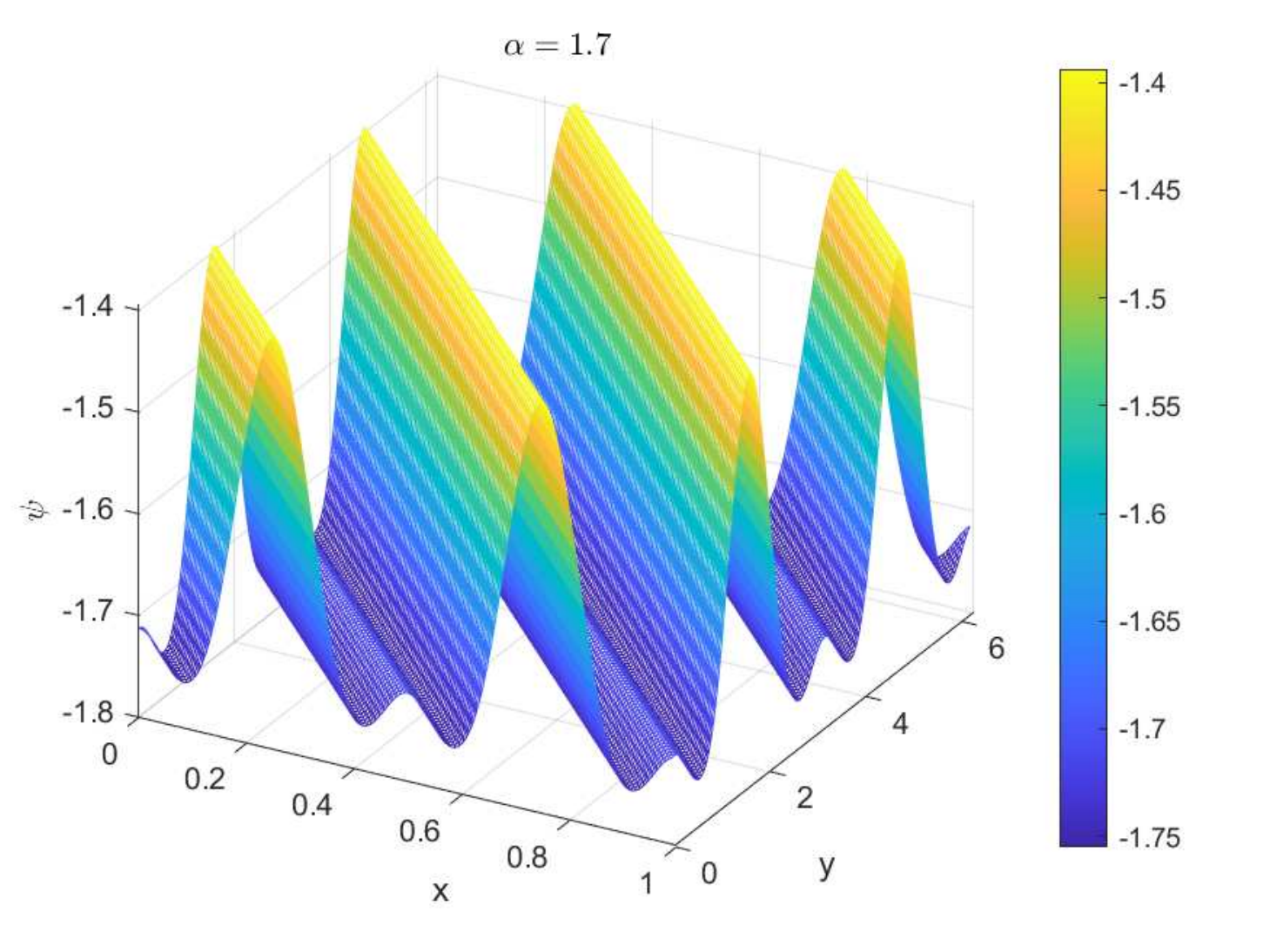}
	\end{minipage}}
	\subfigure[$T=8,\alpha=1.4$]{
		\begin{minipage}[t]{0.22\linewidth}
			\centering
			\includegraphics[width=1.2in]{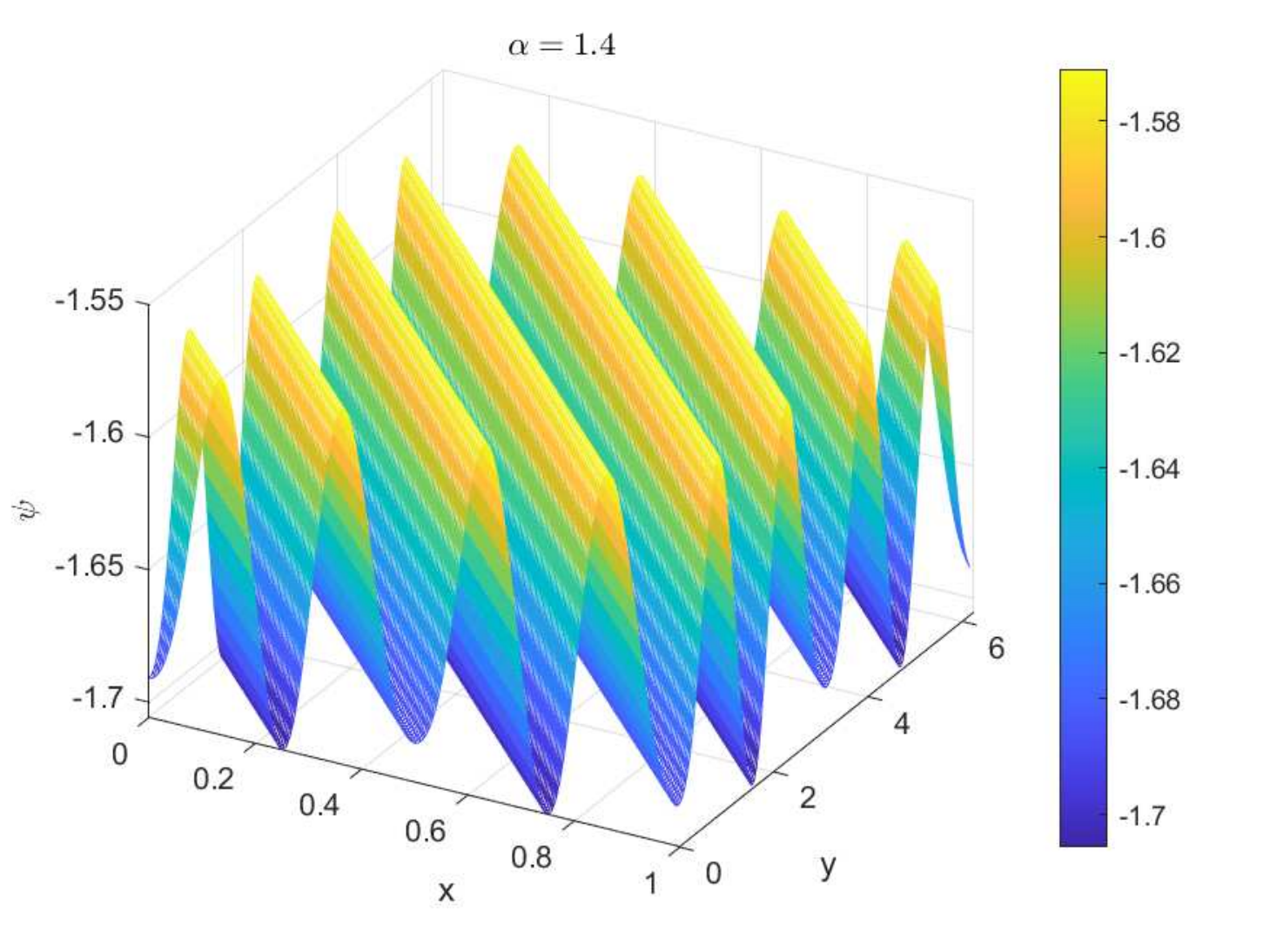}
	\end{minipage}}
	\subfigure[$T=8,\alpha=1.1$]{
		\begin{minipage}[t]{0.22\linewidth}
			\centering
			\includegraphics[width=1.2in]{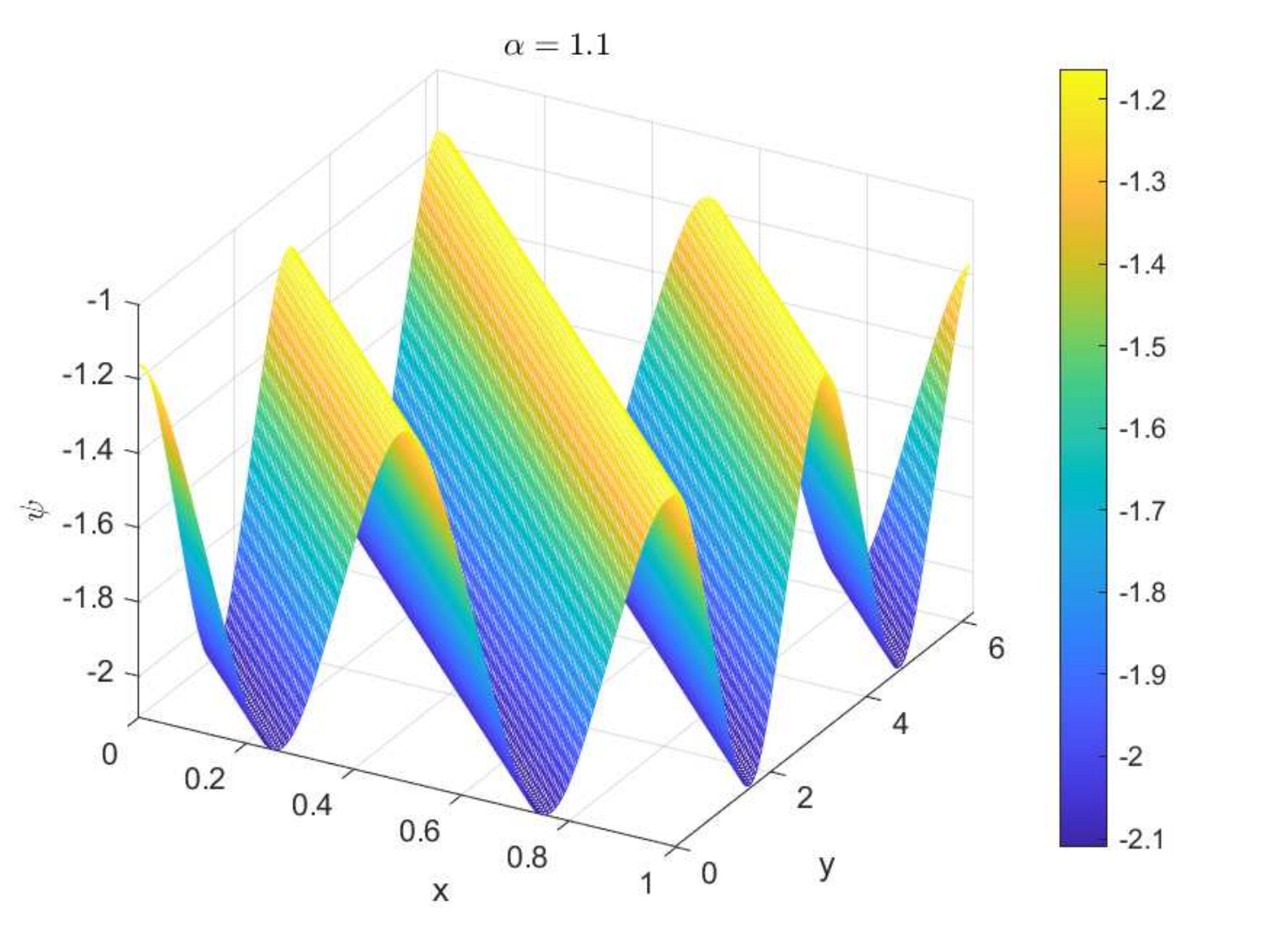}
	\end{minipage}}
	\subfigure[$T=32,\alpha=2$]{
		\begin{minipage}[t]{0.22\linewidth}
			\centering
			\includegraphics[width=1.2in]{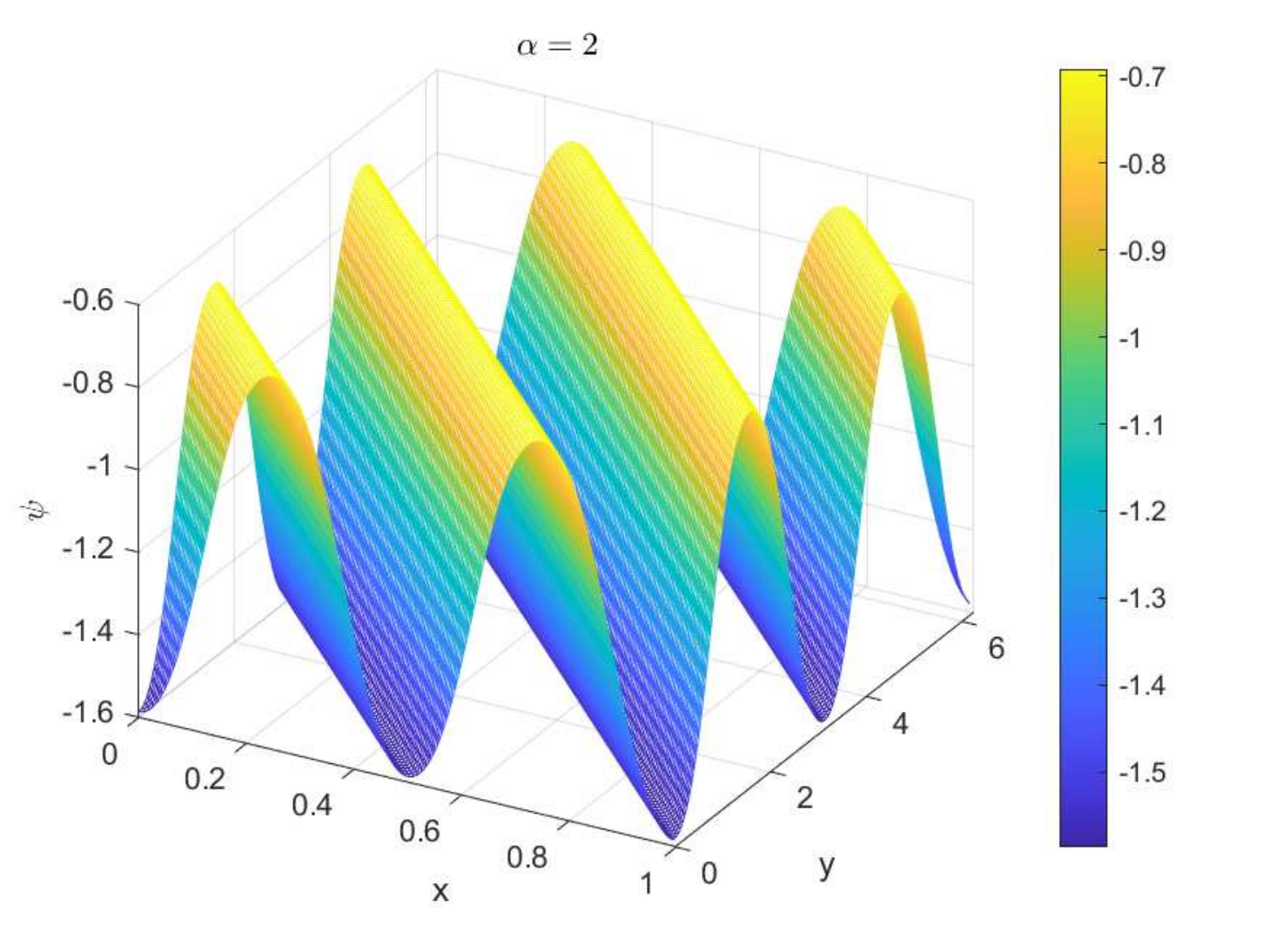}
	\end{minipage}}
	\subfigure[$T=32,\alpha=1.7$]{
		\begin{minipage}[t]{0.22\linewidth}
			\centering
			\includegraphics[width=1.2in]{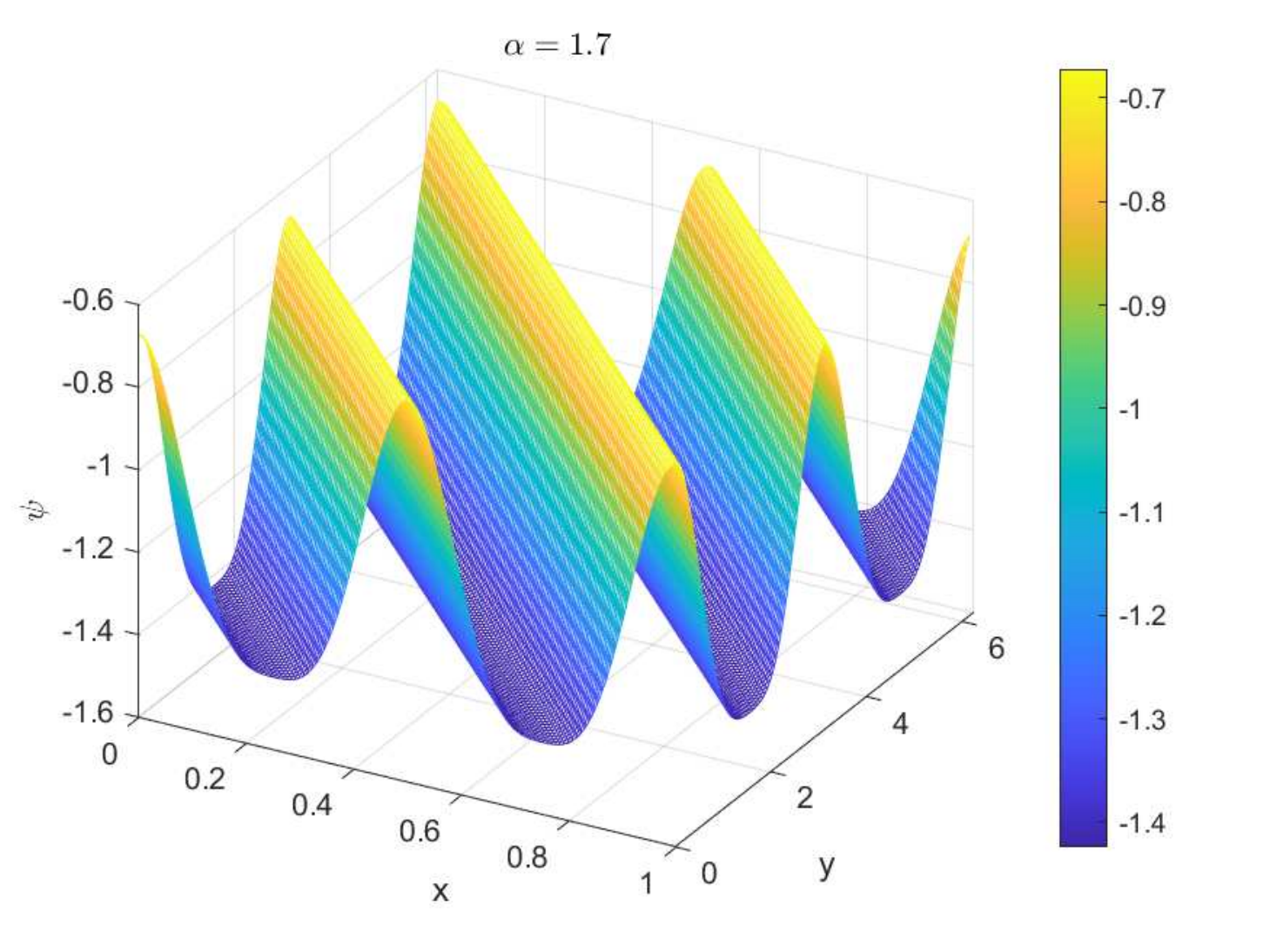}
	\end{minipage}}
	\subfigure[$T=32,\alpha=1.4$]{
		\begin{minipage}[t]{0.22\linewidth}
			\centering
			\includegraphics[width=1.2in]{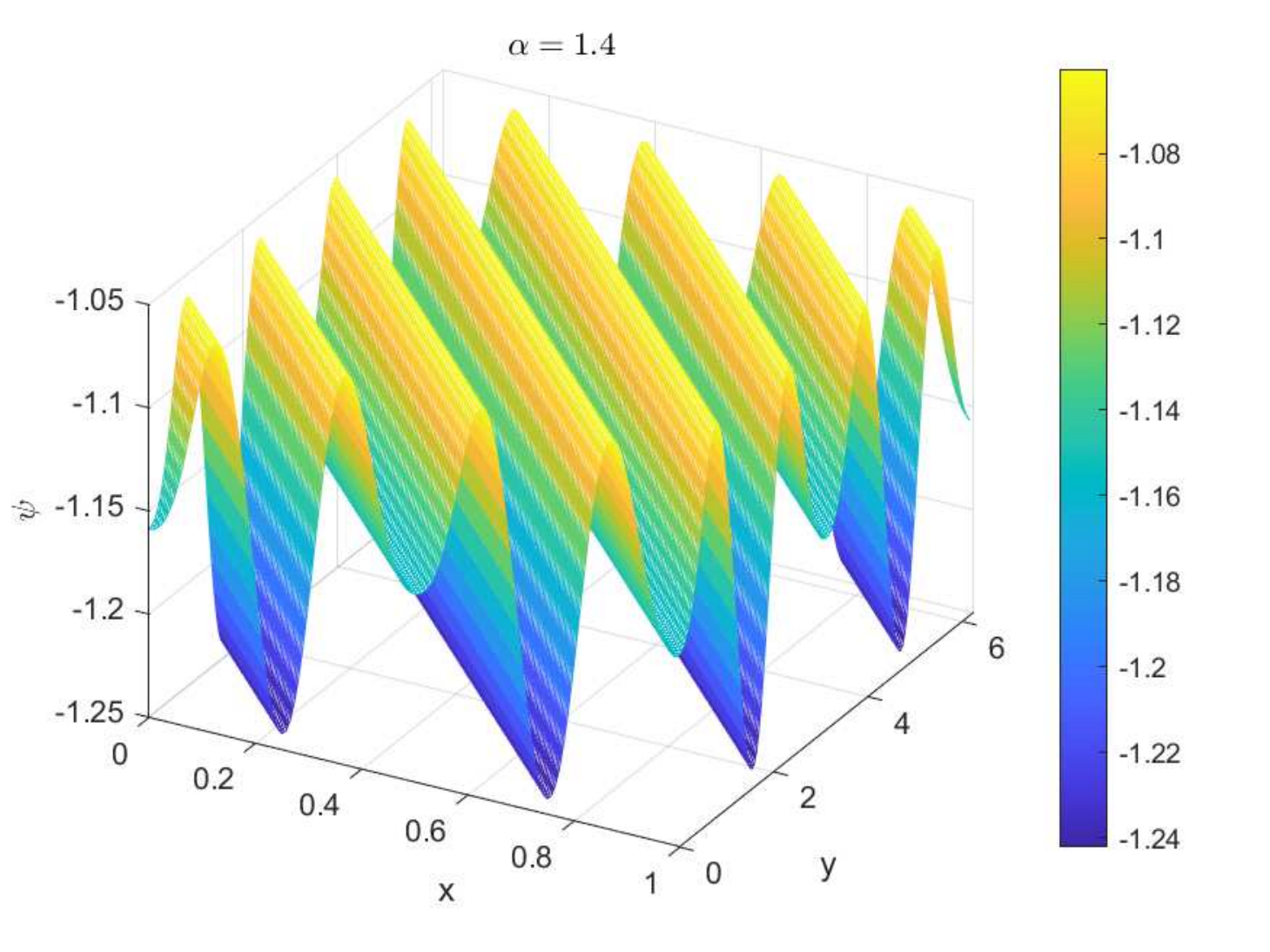}
	\end{minipage}}
	\subfigure[$T=32,\alpha=1.1$]{
		\begin{minipage}[t]{0.22\linewidth}
			\centering
			\includegraphics[width=1.2in]{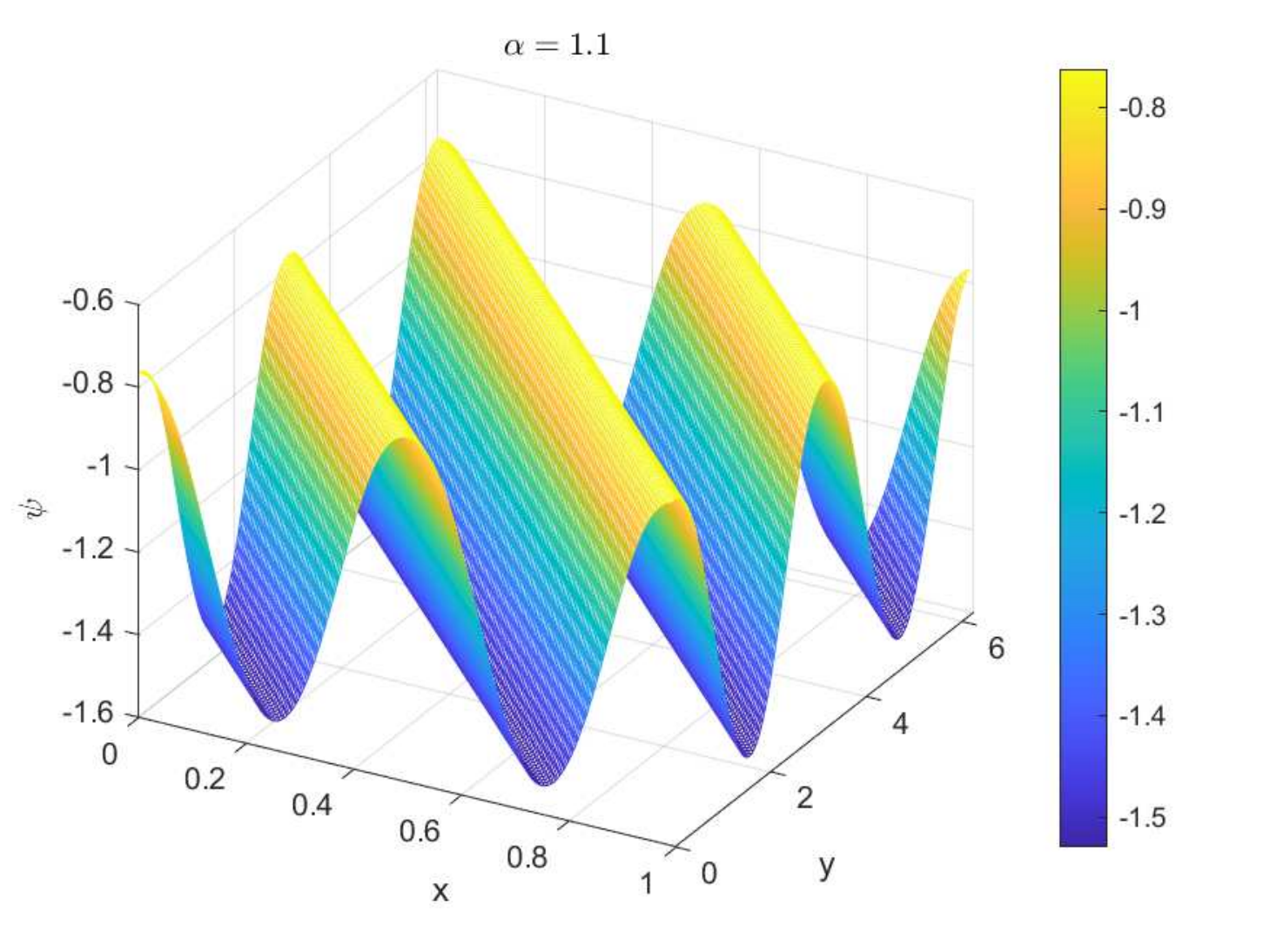}
	\end{minipage}}
\subfigure[$T=128,\alpha=2$]{
		\begin{minipage}[t]{0.22\linewidth}
			\centering
			\includegraphics[width=1.2in]{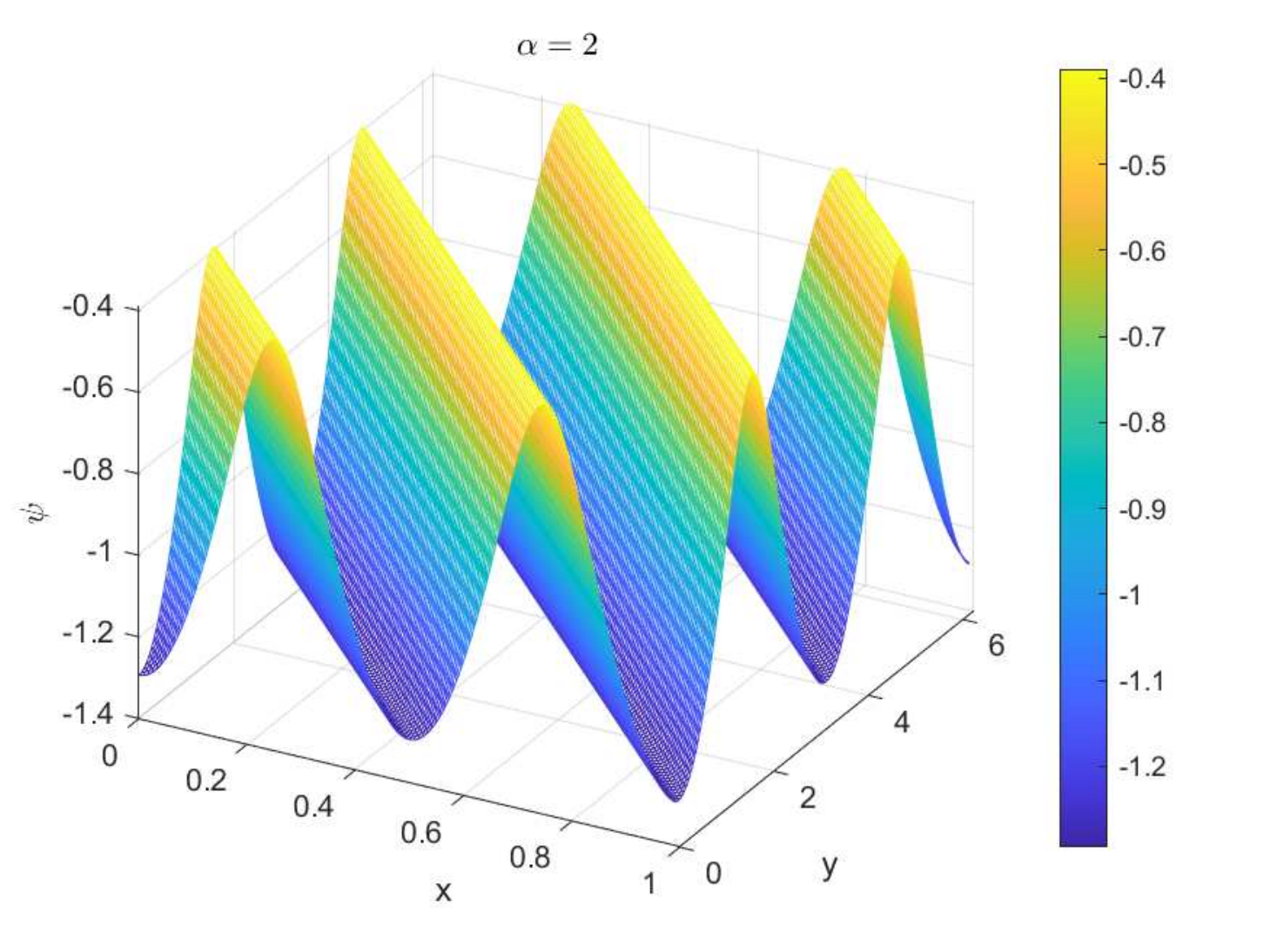}
	\end{minipage}}
	\subfigure[$T=128,\alpha=1.7$]{
		\begin{minipage}[t]{0.22\linewidth}
			\centering
			\includegraphics[width=1.2in]{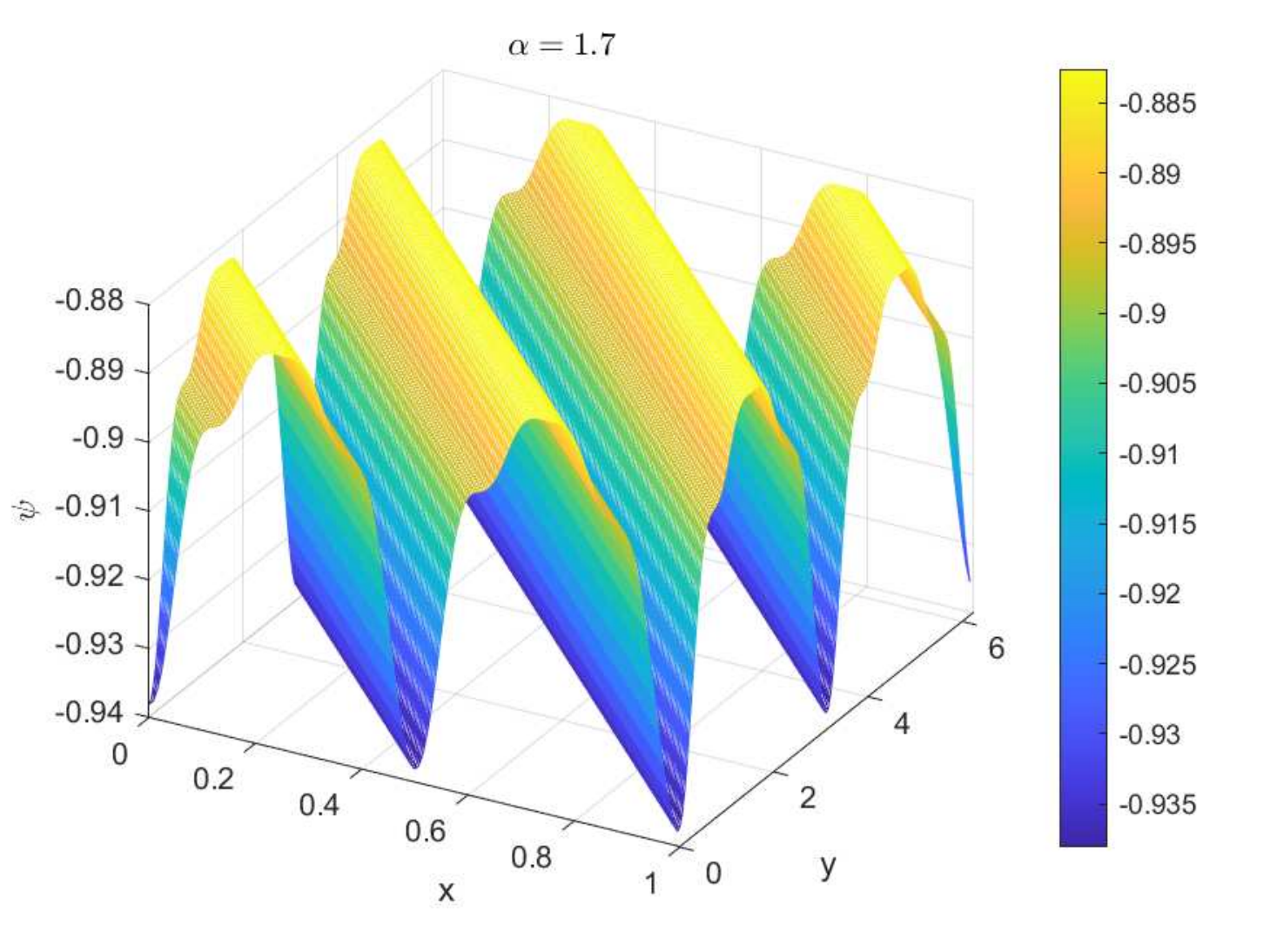}
	\end{minipage}}
	\subfigure[$T=128,\alpha=1.4$]{
		\begin{minipage}[t]{0.22\linewidth}
			\centering
			\includegraphics[width=1.2in]{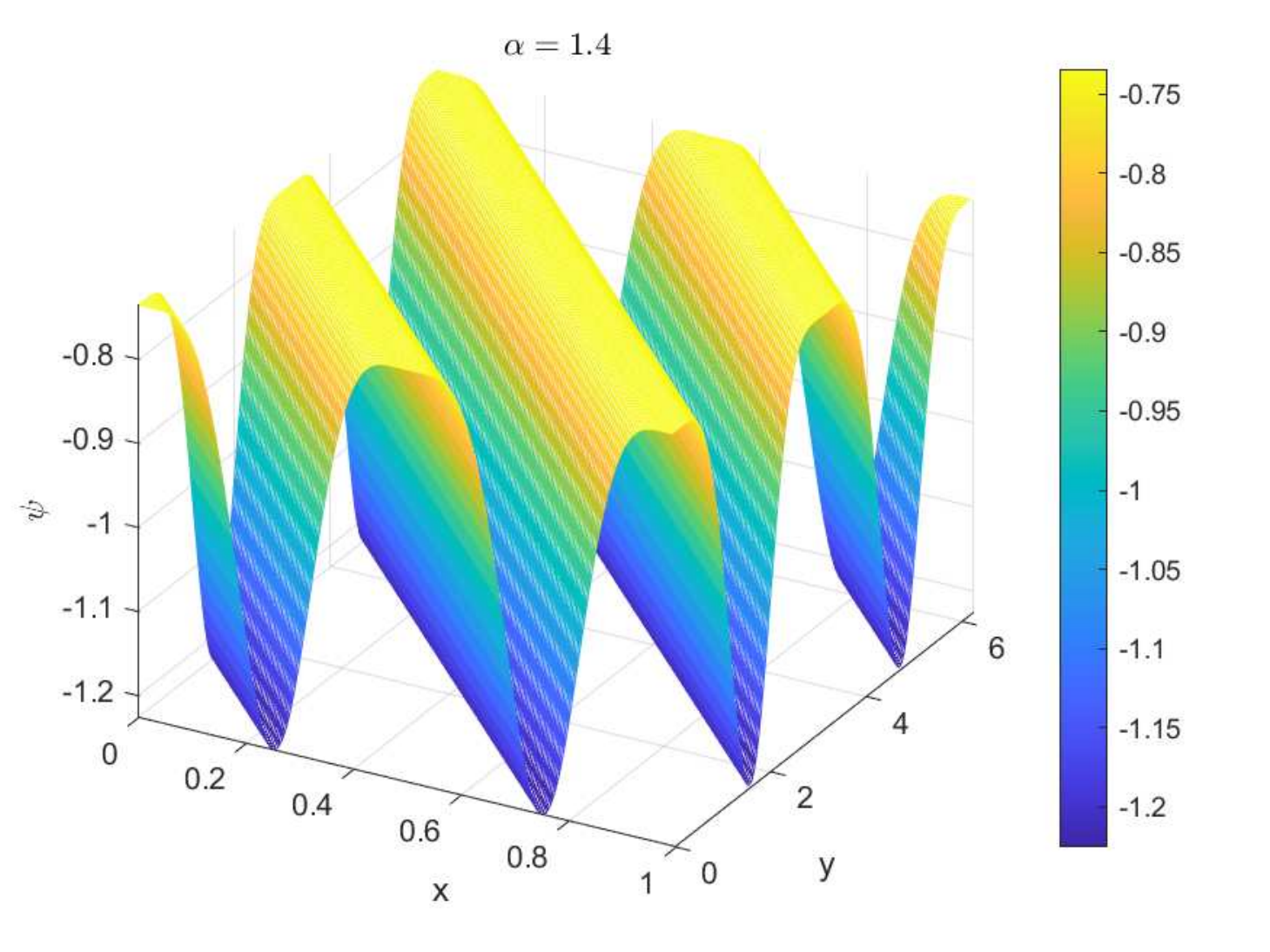}
	\end{minipage}}
	\subfigure[$T=128,\alpha=1.1$]{
		\begin{minipage}[t]{0.22\linewidth}
			\centering
			\includegraphics[width=1.2in]{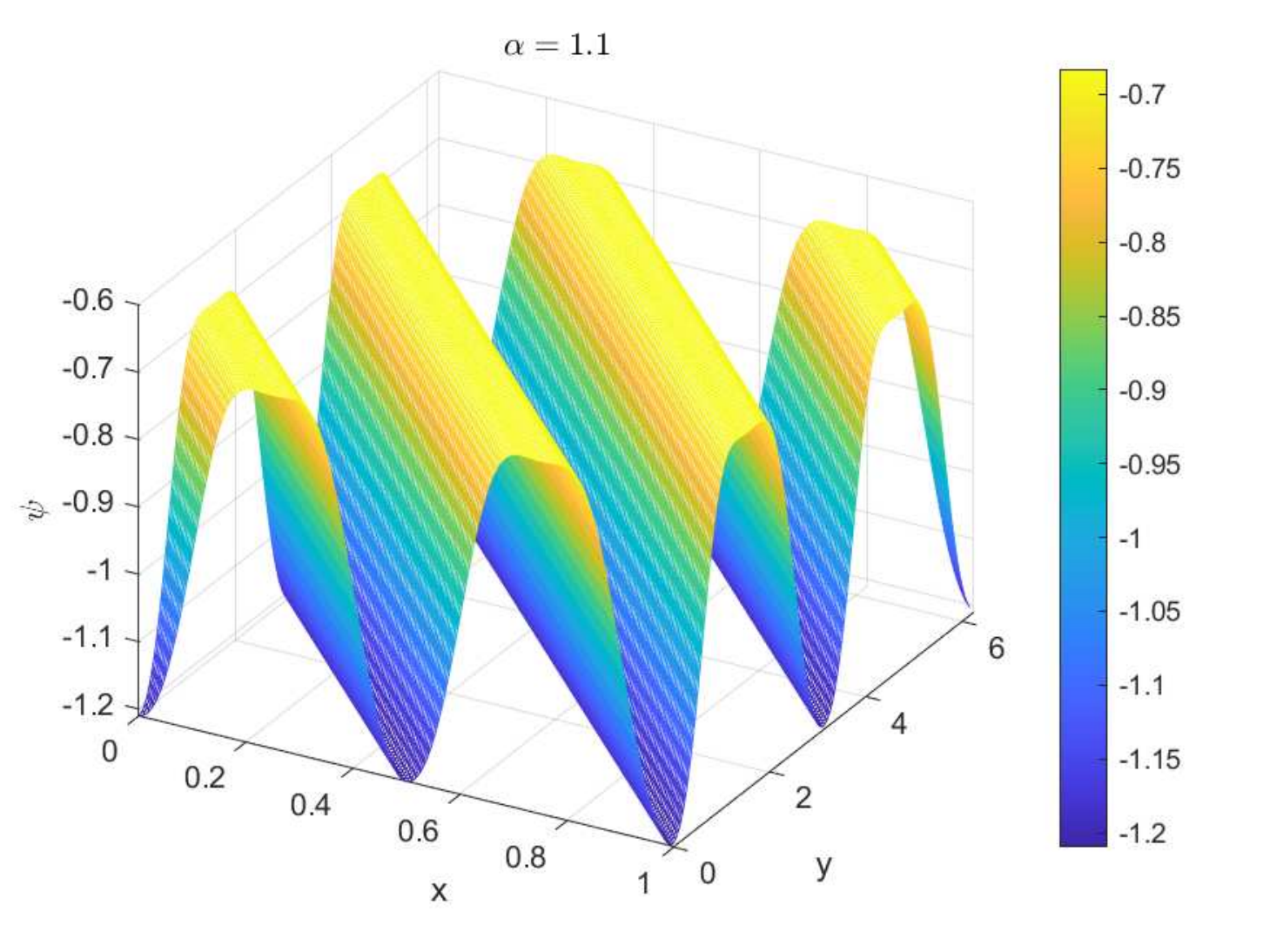}
	\end{minipage}}
	\caption{The numerical solutions for the NSFKGE \eqref{eq4.1.1} with $p=1$ in 2D}
	\label{fig5.41}
\end{figure}

\clearpage

\begin{figure}[htb]
	\centering
    \subfigure[$T=0,\alpha=2$]{
		\begin{minipage}[t]{0.22\linewidth}
			\centering
			\includegraphics[width=1.2in]{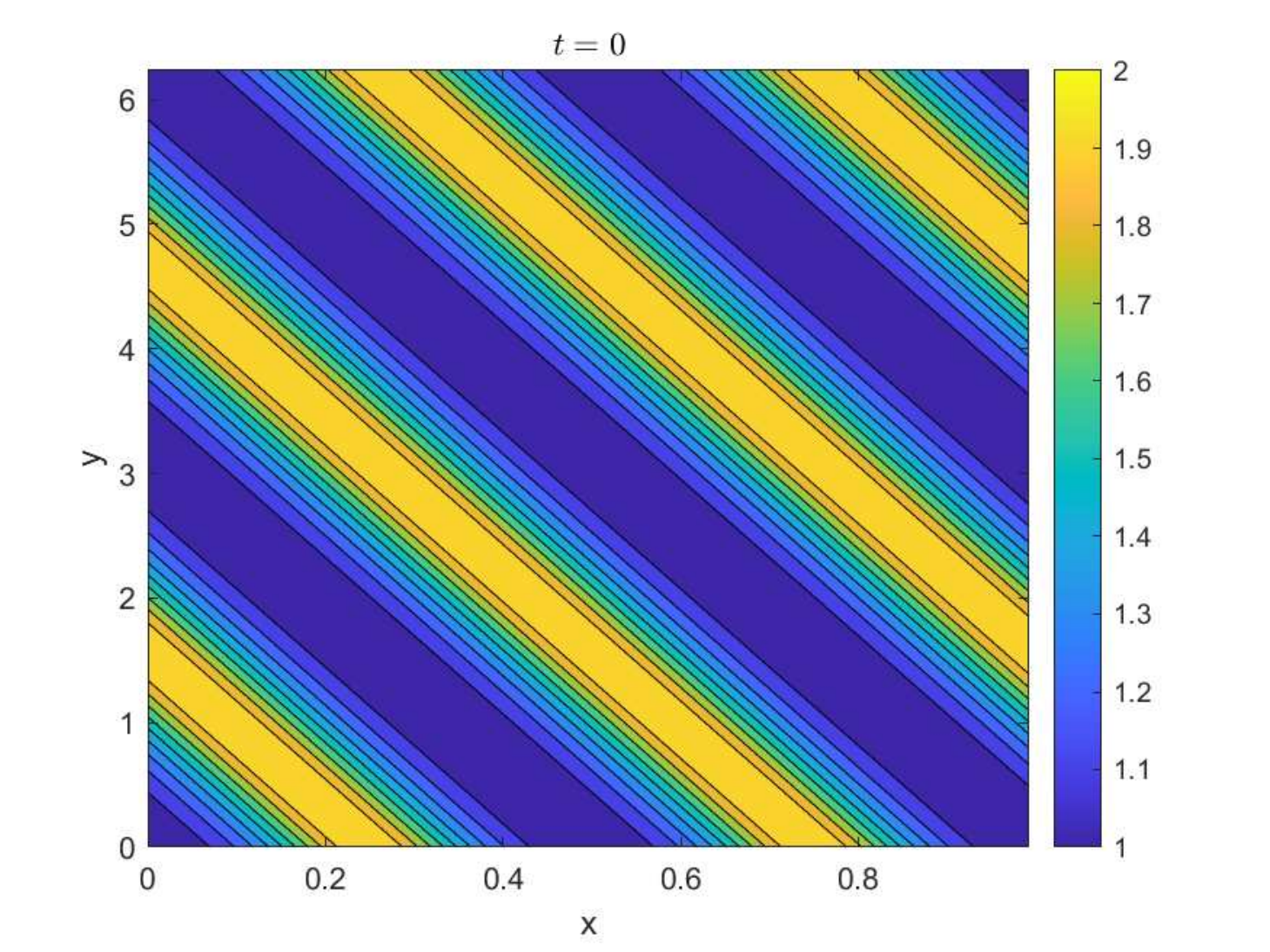}
	\end{minipage}}
	\subfigure[$T=0,\alpha=1.7$]{
		\begin{minipage}[t]{0.22\linewidth}
			\centering
			\includegraphics[width=1.2in]{cf_2D_t-eps-converted-to}
	\end{minipage}}
	\subfigure[$T=0,\alpha=1.4$]{
		\begin{minipage}[t]{0.22\linewidth}
			\centering
			\includegraphics[width=1.2in]{cf_2D_t-eps-converted-to}
	\end{minipage}}
	\subfigure[$T=0,\alpha=1.1$]{
		\begin{minipage}[t]{0.22\linewidth}
			\centering
			\includegraphics[width=1.2in]{cf_2D_t-eps-converted-to}
	\end{minipage}}
	\subfigure[$T=2,\alpha=2$]{
		\begin{minipage}[t]{0.22\linewidth}
			\centering
			\includegraphics[width=1.2in]{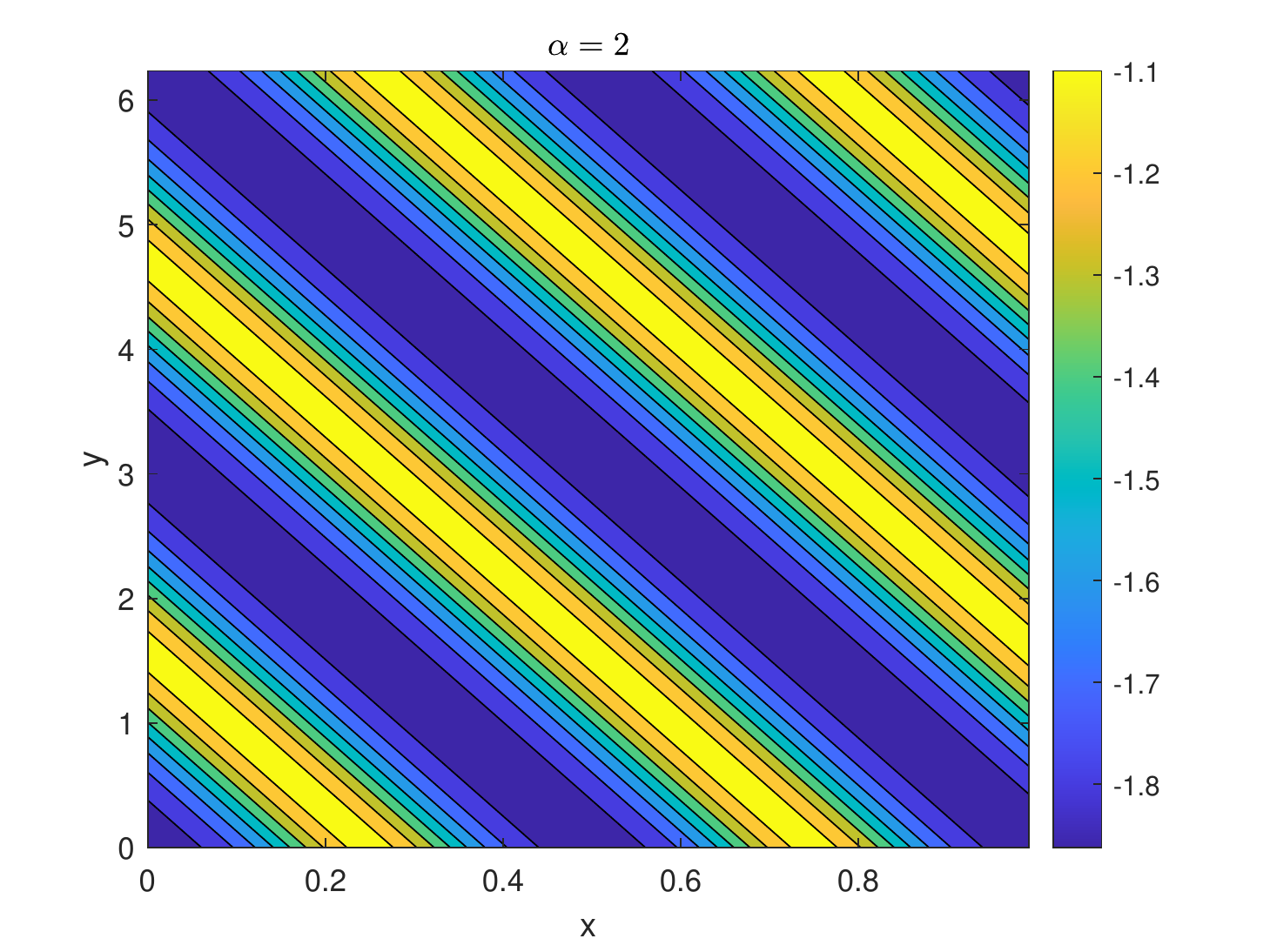}
	\end{minipage}}
	\subfigure[$T=2,\alpha=1.7$]{
		\begin{minipage}[t]{0.22\linewidth}
			\centering
			\includegraphics[width=1.2in]{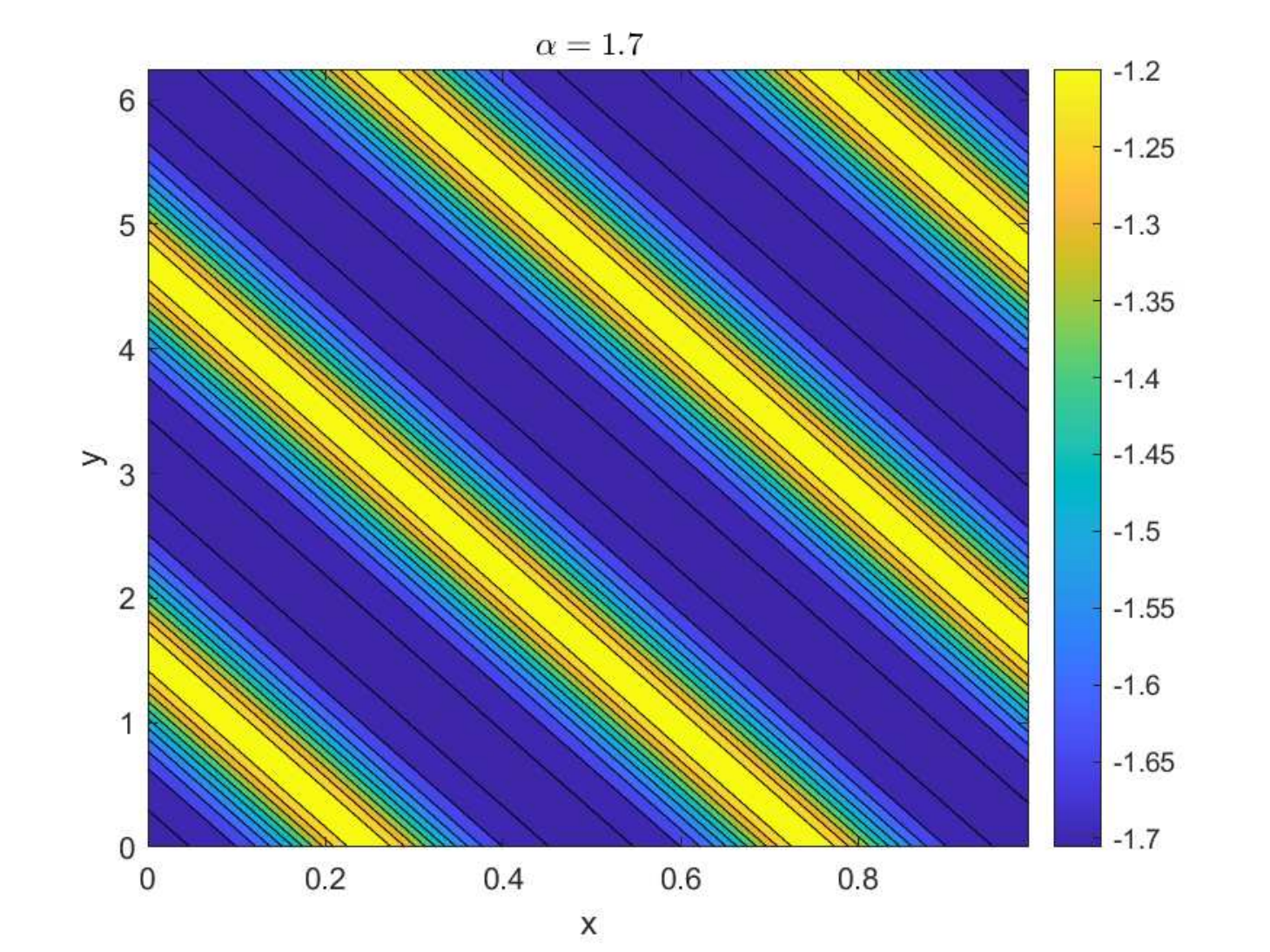}
	\end{minipage}}
	\subfigure[$T=2,\alpha=1.4$]{
		\begin{minipage}[t]{0.22\linewidth}
			\centering
			\includegraphics[width=1.2in]{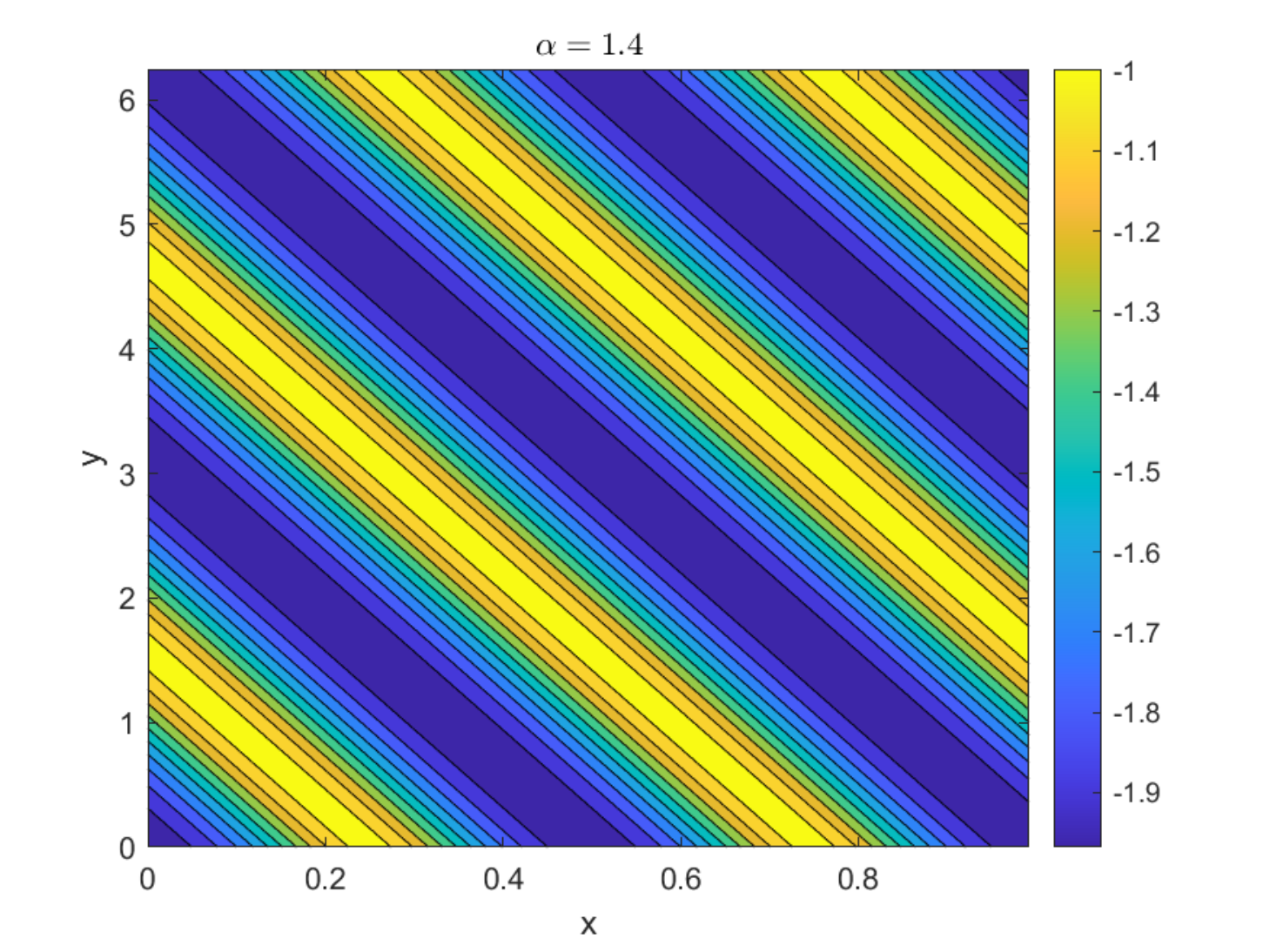}
	\end{minipage}}
	\subfigure[$T=2,\alpha=1.1$]{
		\begin{minipage}[t]{0.22\linewidth}
			\centering
			\includegraphics[width=1.2in]{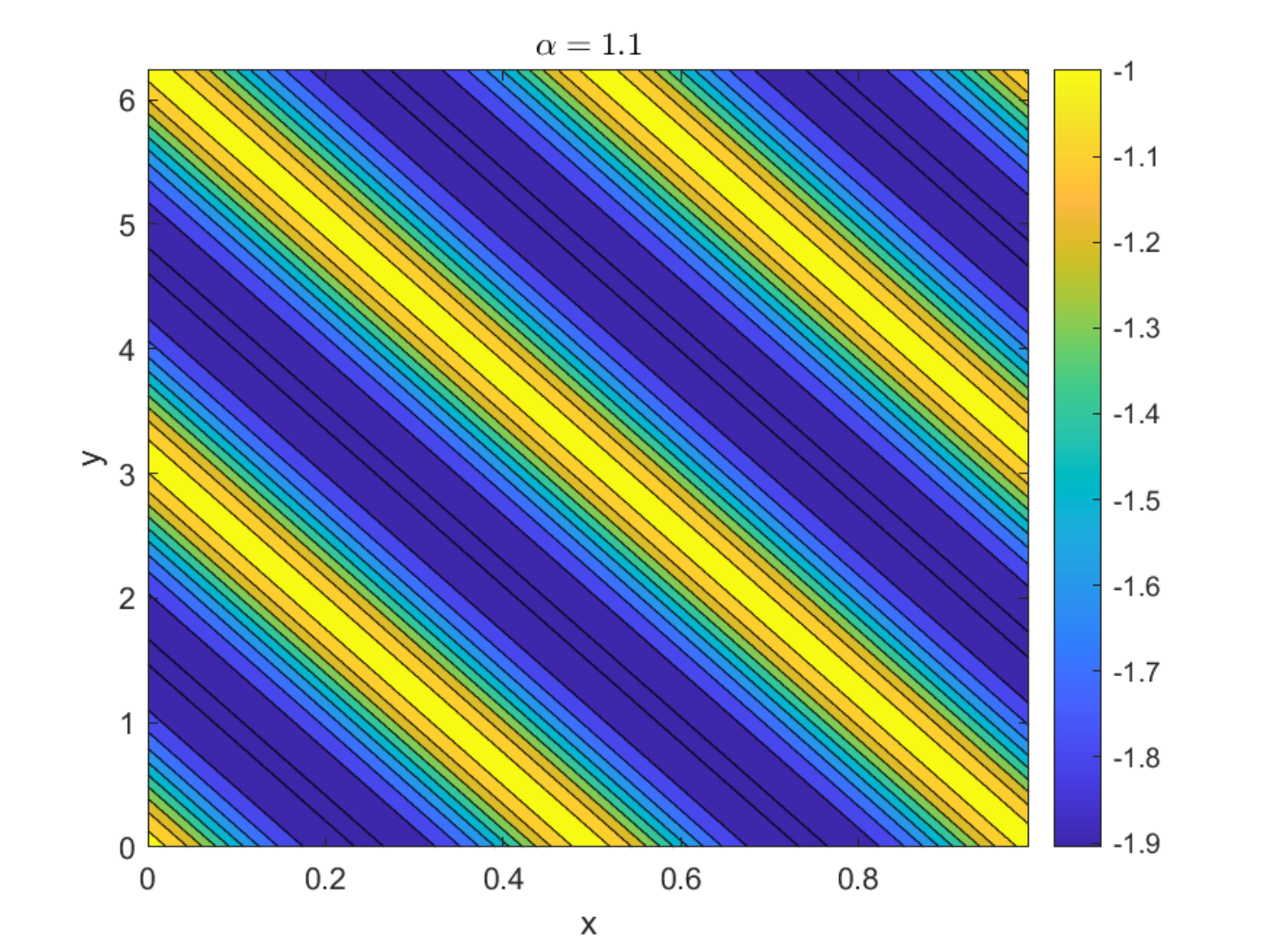}
	\end{minipage}}
	\subfigure [$T=8,\alpha=2$]{
		\begin{minipage}[t]{0.22\linewidth}
			\centering
			\includegraphics[width=1.2in]{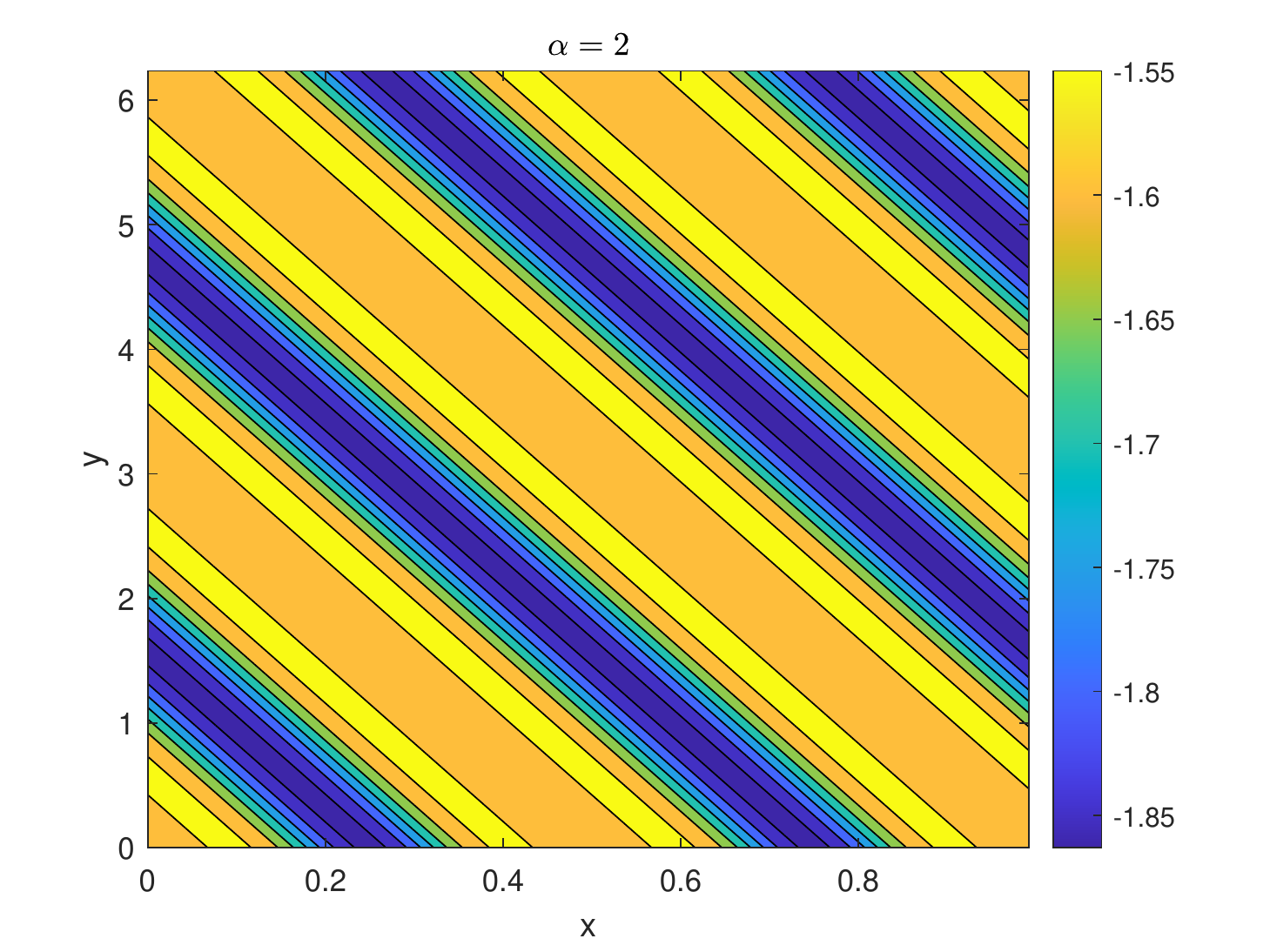}
	\end{minipage}}
	\subfigure[$T=8,\alpha=1.7$]{
		\begin{minipage}[t]{0.22\linewidth}
			\centering
			\includegraphics[width=1.2in]{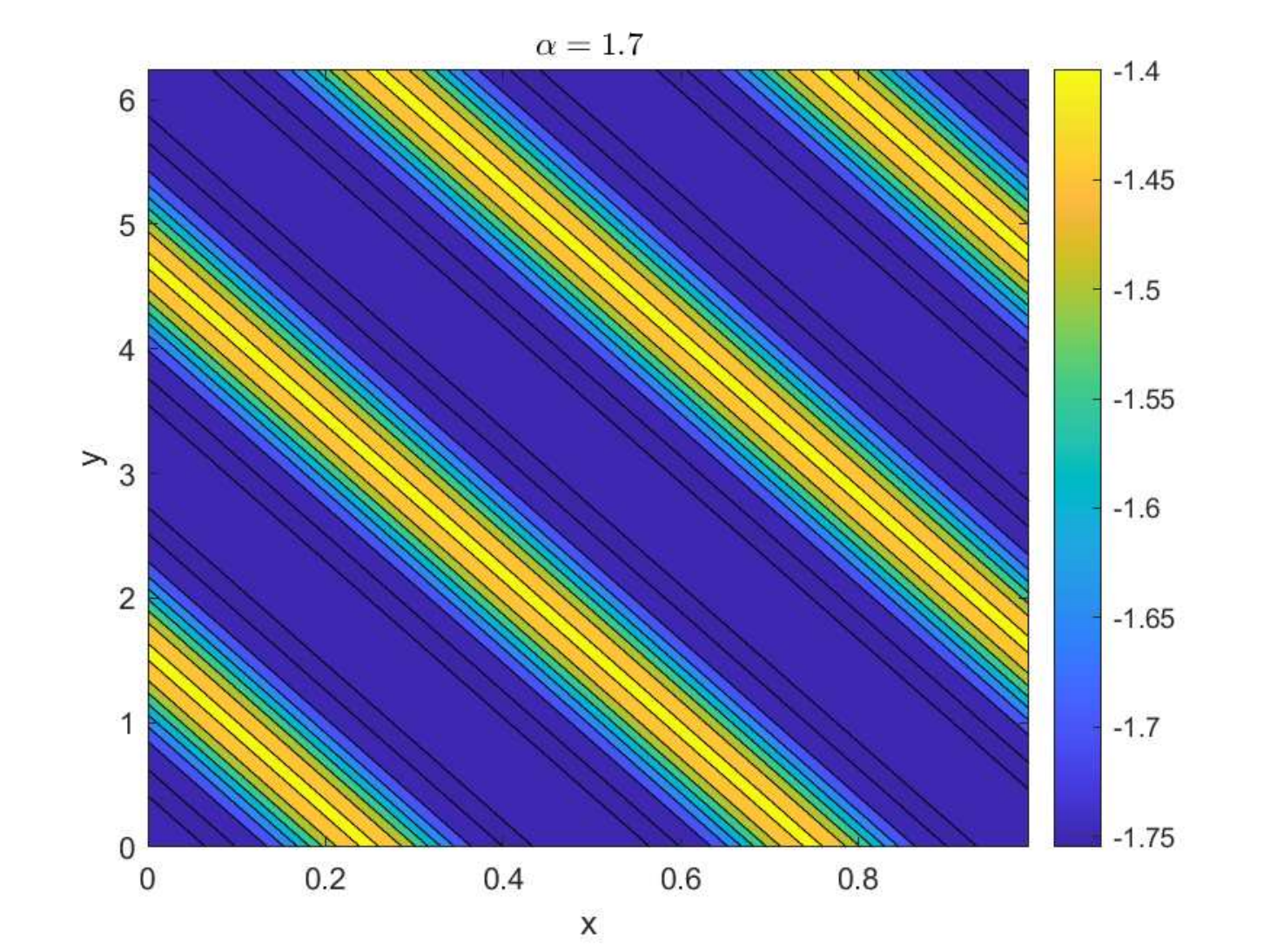}
	\end{minipage}}
	\subfigure[$T=8,\alpha=1.4$]{
		\begin{minipage}[t]{0.22\linewidth}
			\centering
			\includegraphics[width=1.2in]{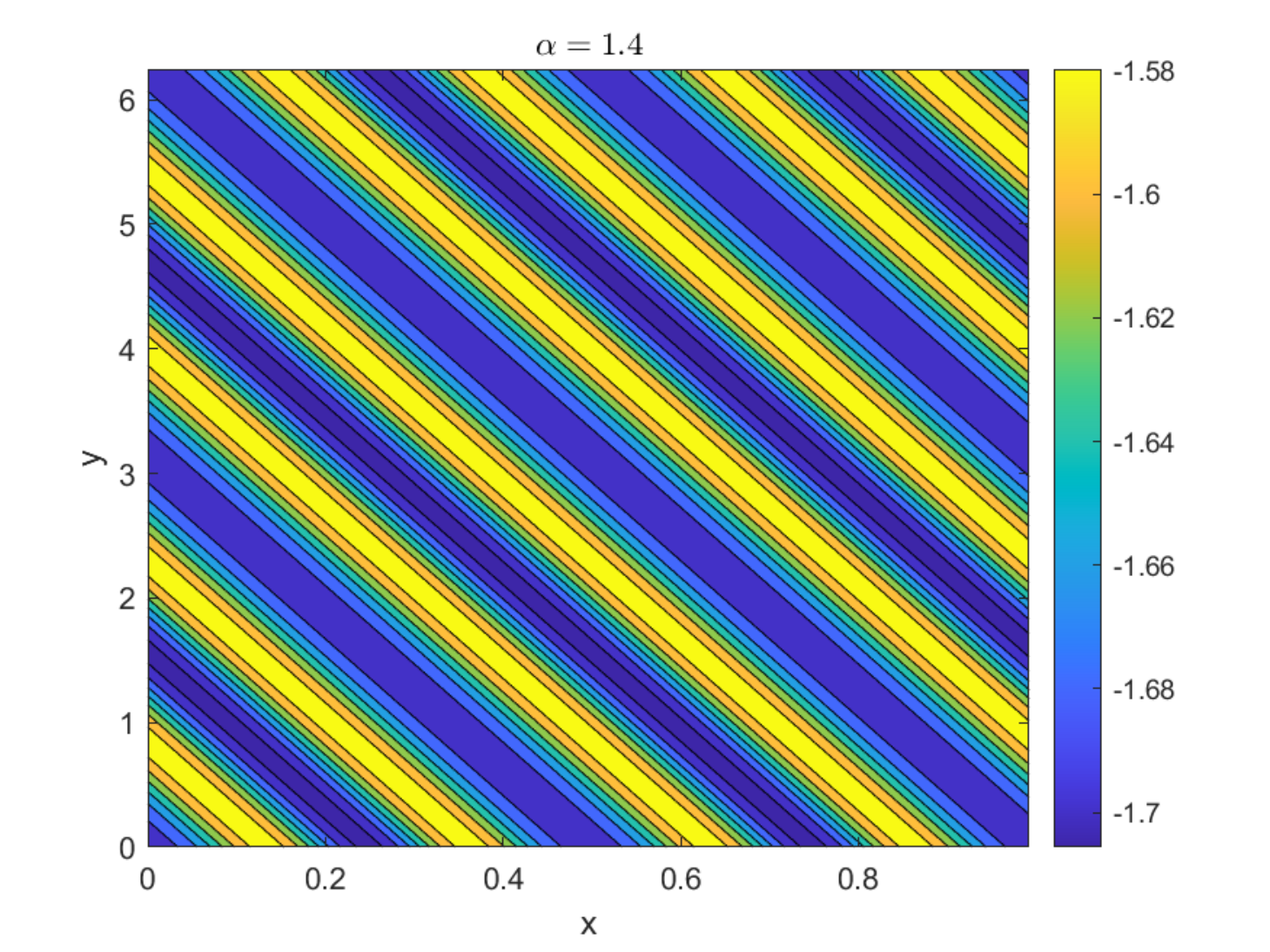}
	\end{minipage}}
	\subfigure[$T=8,\alpha=1.1$]{
		\begin{minipage}[t]{0.22\linewidth}
			\centering
			\includegraphics[width=1.2in]{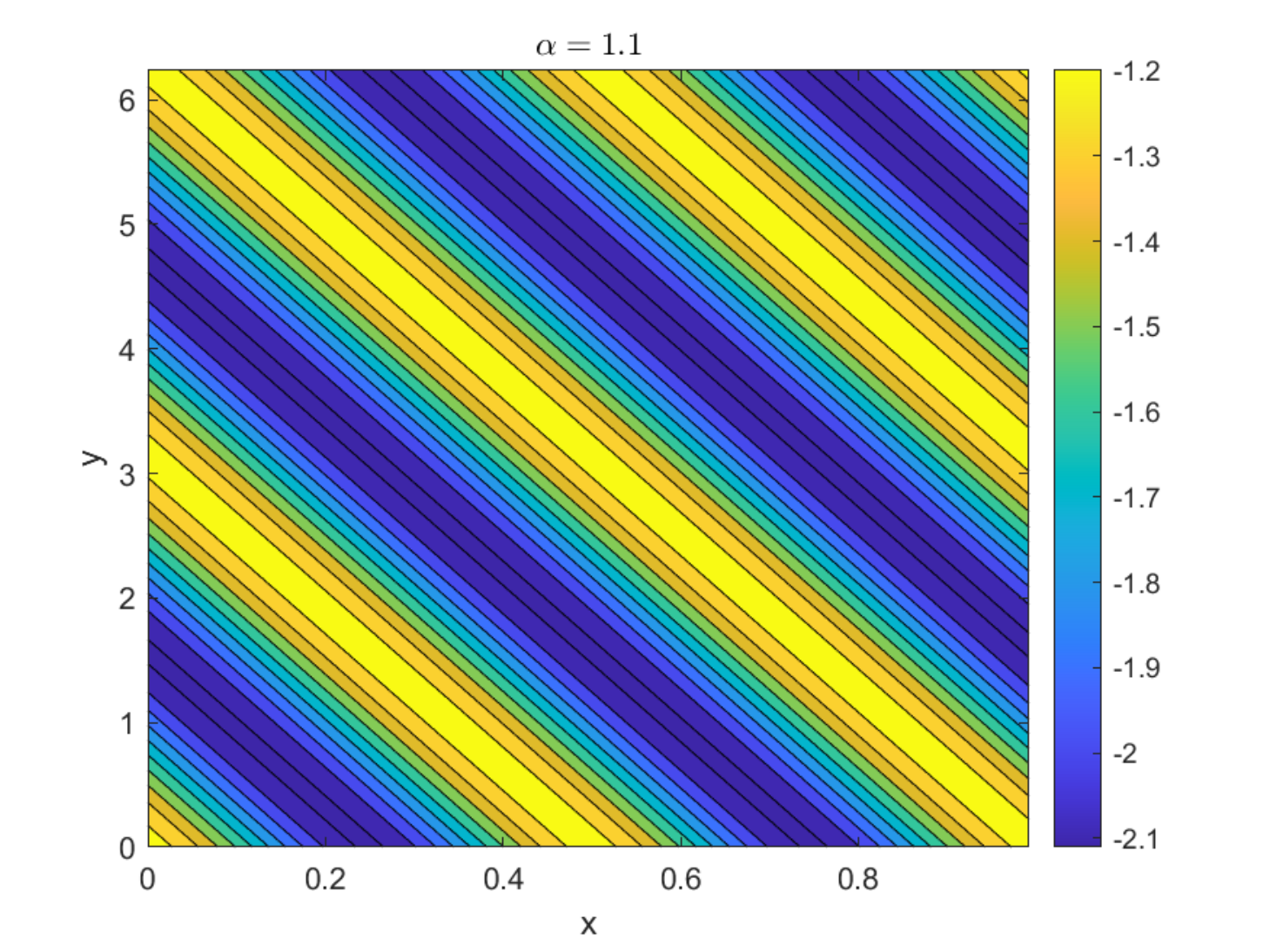}
	\end{minipage}}
	\subfigure[$T=32,\alpha=2$]{
		\begin{minipage}[t]{0.22\linewidth}
			\centering
			\includegraphics[width=1.2in]{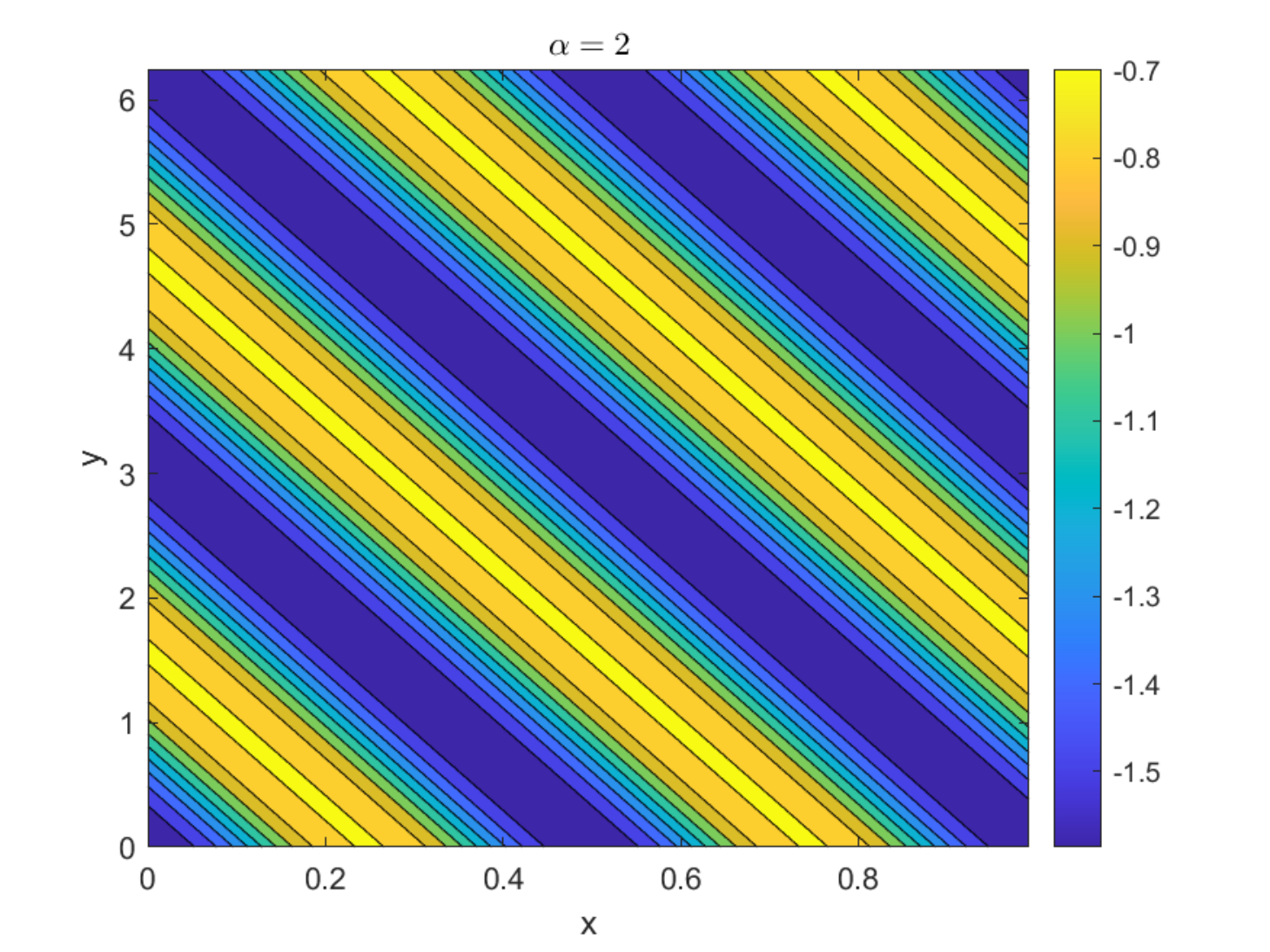}
	\end{minipage}}
	\subfigure[$T=32,\alpha=1.7$]{
		\begin{minipage}[t]{0.22\linewidth}
			\centering
			\includegraphics[width=1.2in]{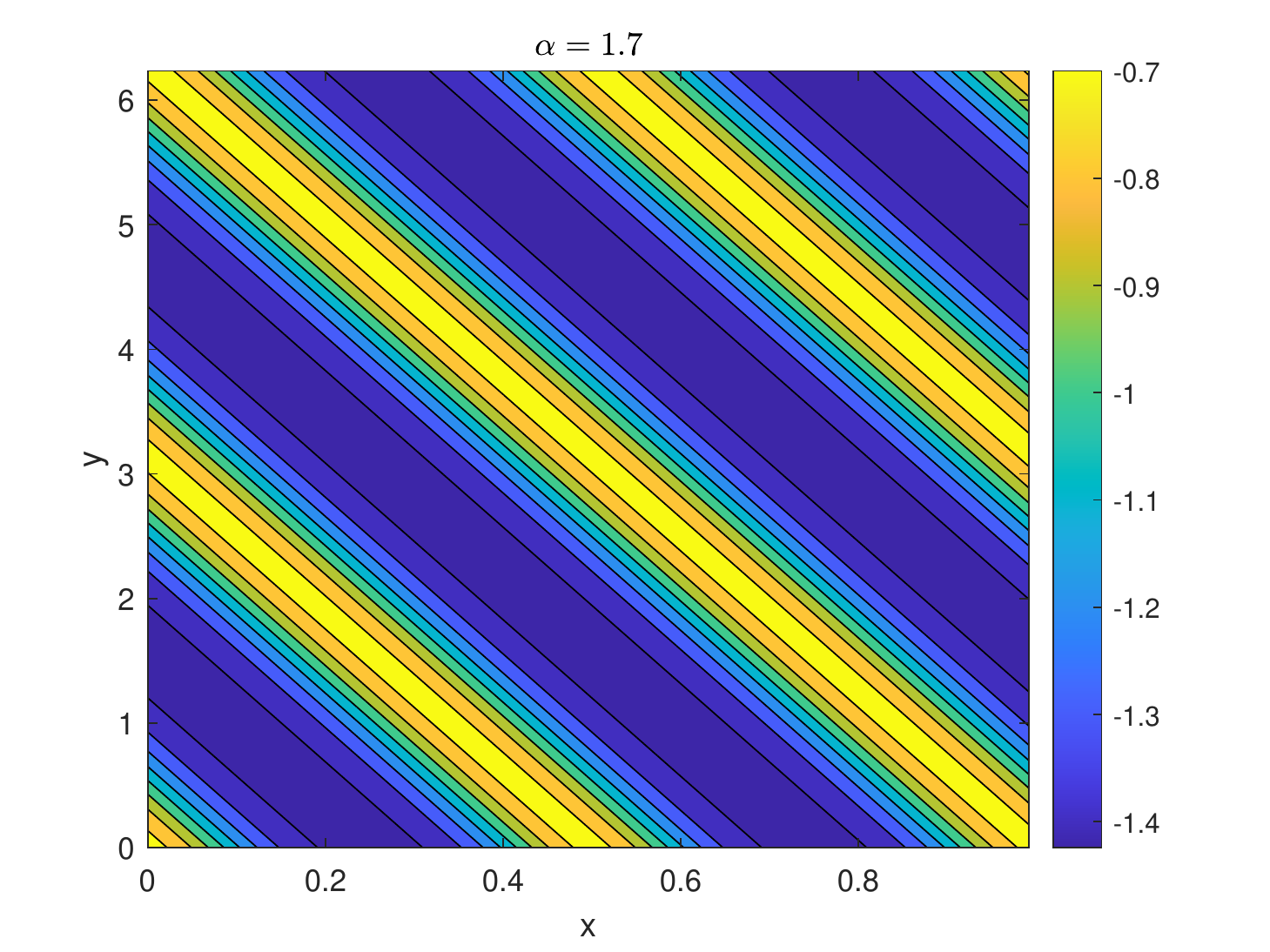}
	\end{minipage}}
	\subfigure[$T=32,\alpha=1.4$]{
		\begin{minipage}[t]{0.22\linewidth}
			\centering
			\includegraphics[width=1.2in]{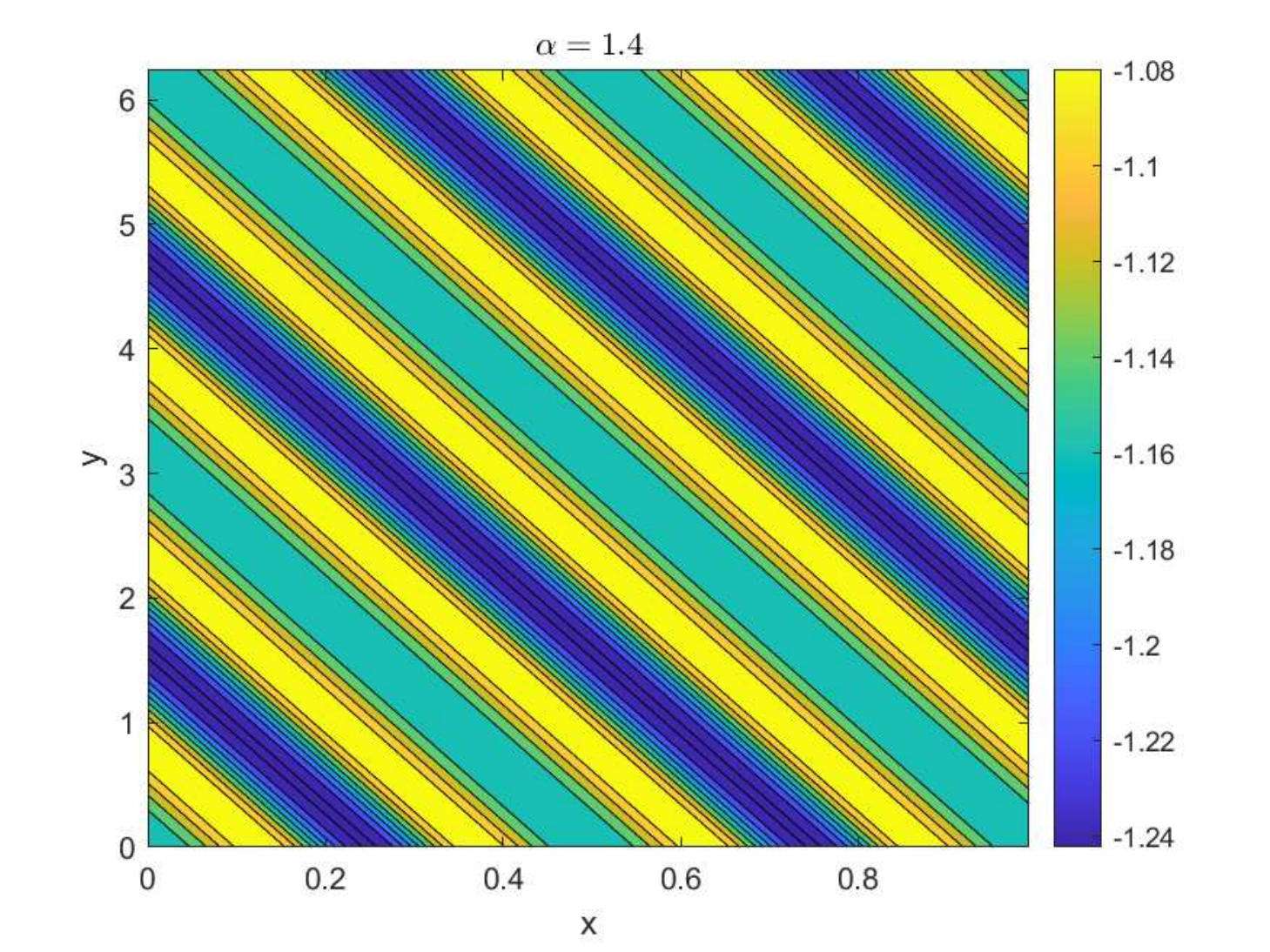}
	\end{minipage}}
	\subfigure[$T=32,\alpha=1.1$]{
		\begin{minipage}[t]{0.22\linewidth}
			\centering
			\includegraphics[width=1.2in]{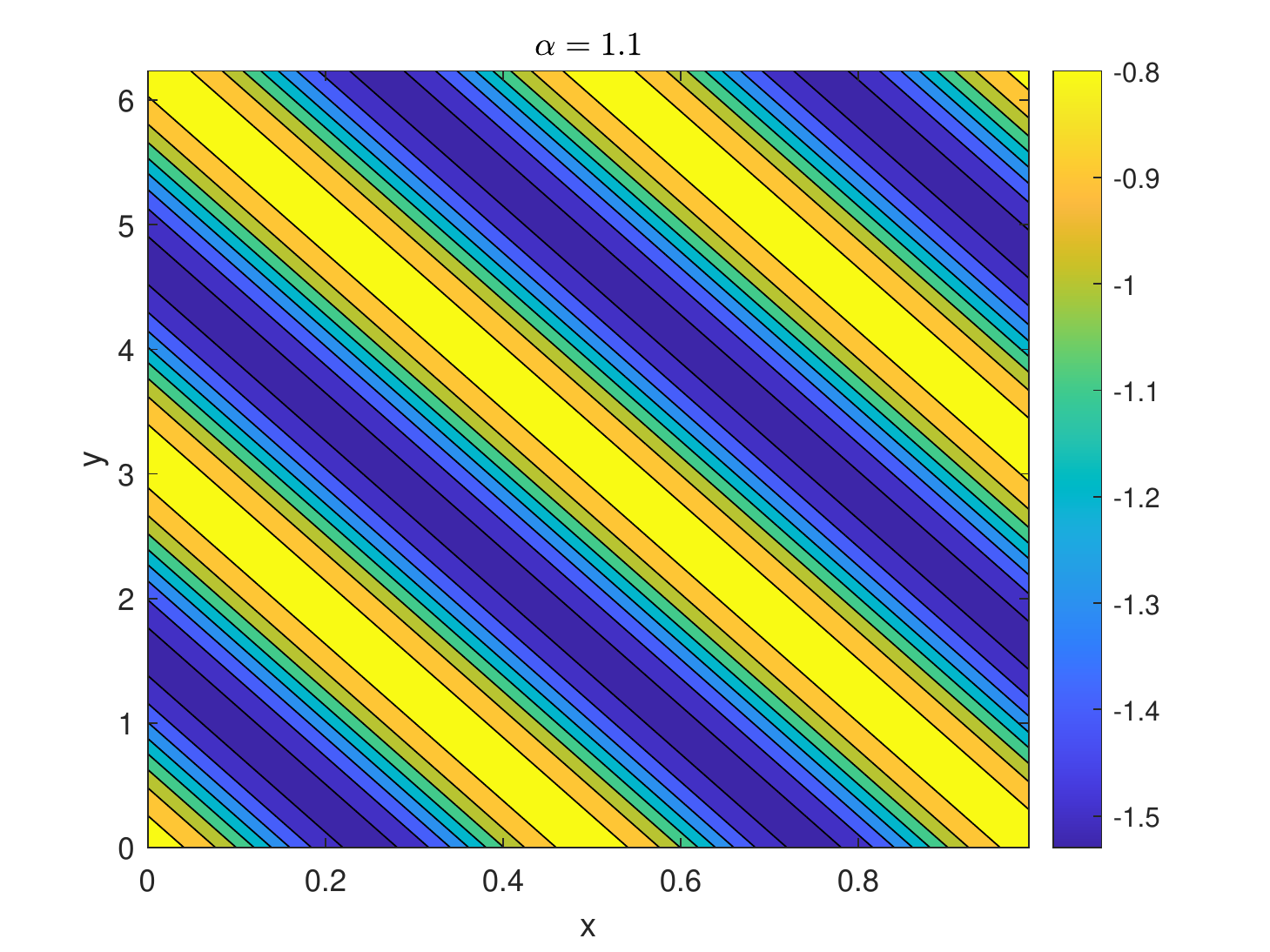}
	\end{minipage}}
\subfigure[$T=128,\alpha=2$]{
		\begin{minipage}[t]{0.22\linewidth}
			\centering
			\includegraphics[width=1.2in]{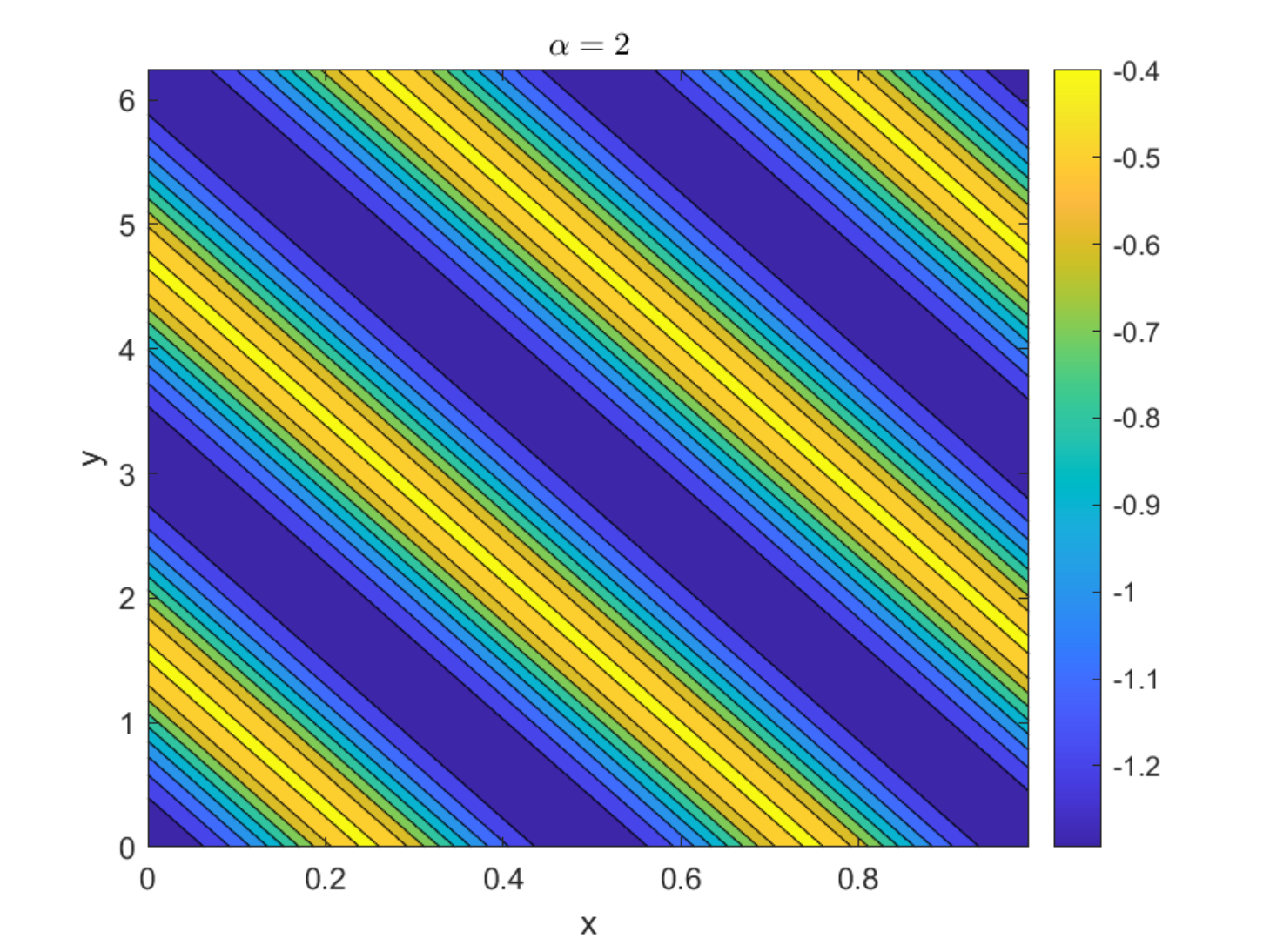}
	\end{minipage}}
	\subfigure[$T=128,\alpha=1.7$]{
		\begin{minipage}[t]{0.22\linewidth}
			\centering
			\includegraphics[width=1.2in]{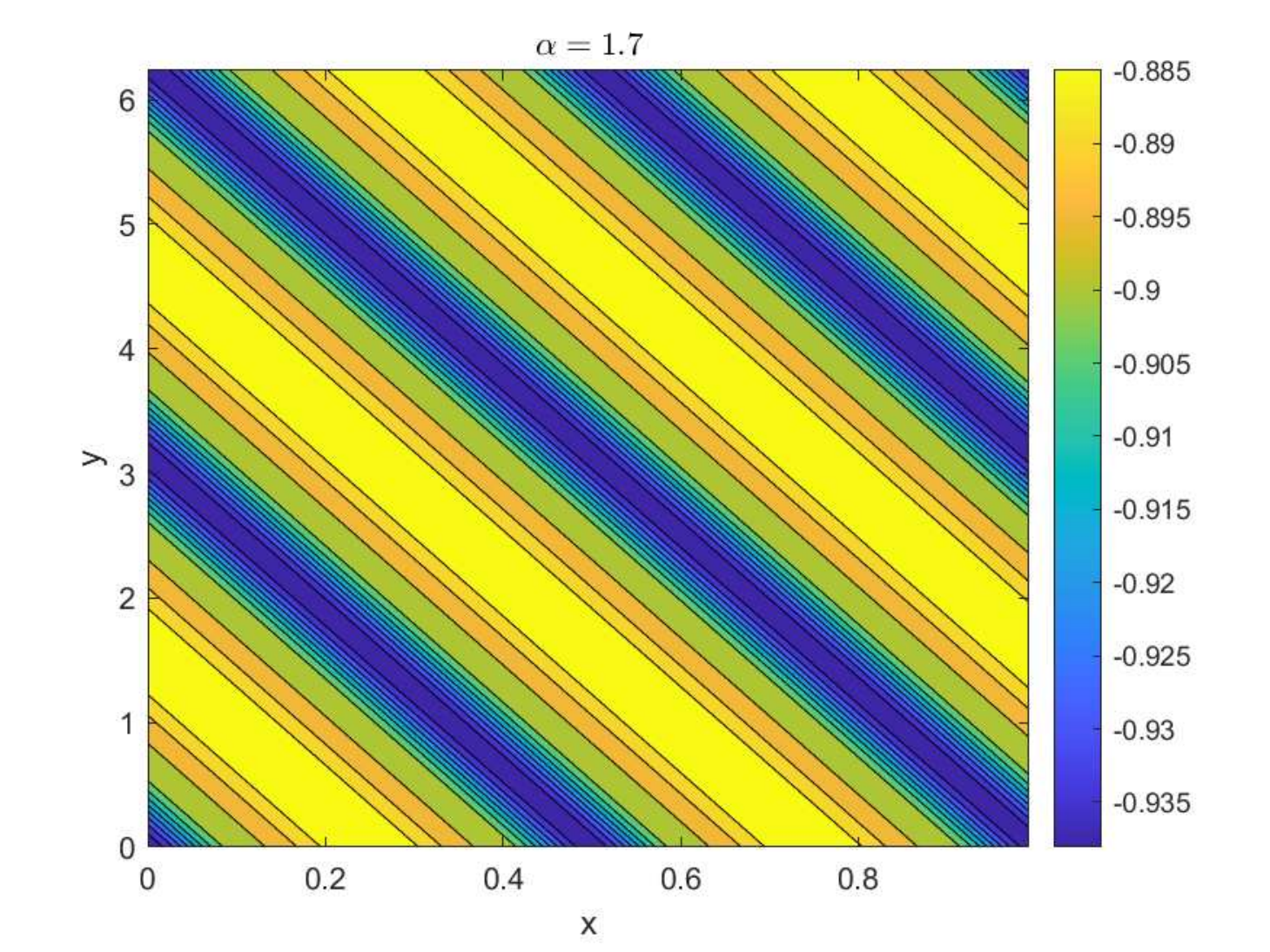}
	\end{minipage}}
	\subfigure[$T=128,\alpha=1.4$]{
		\begin{minipage}[t]{0.22\linewidth}
			\centering
			\includegraphics[width=1.2in]{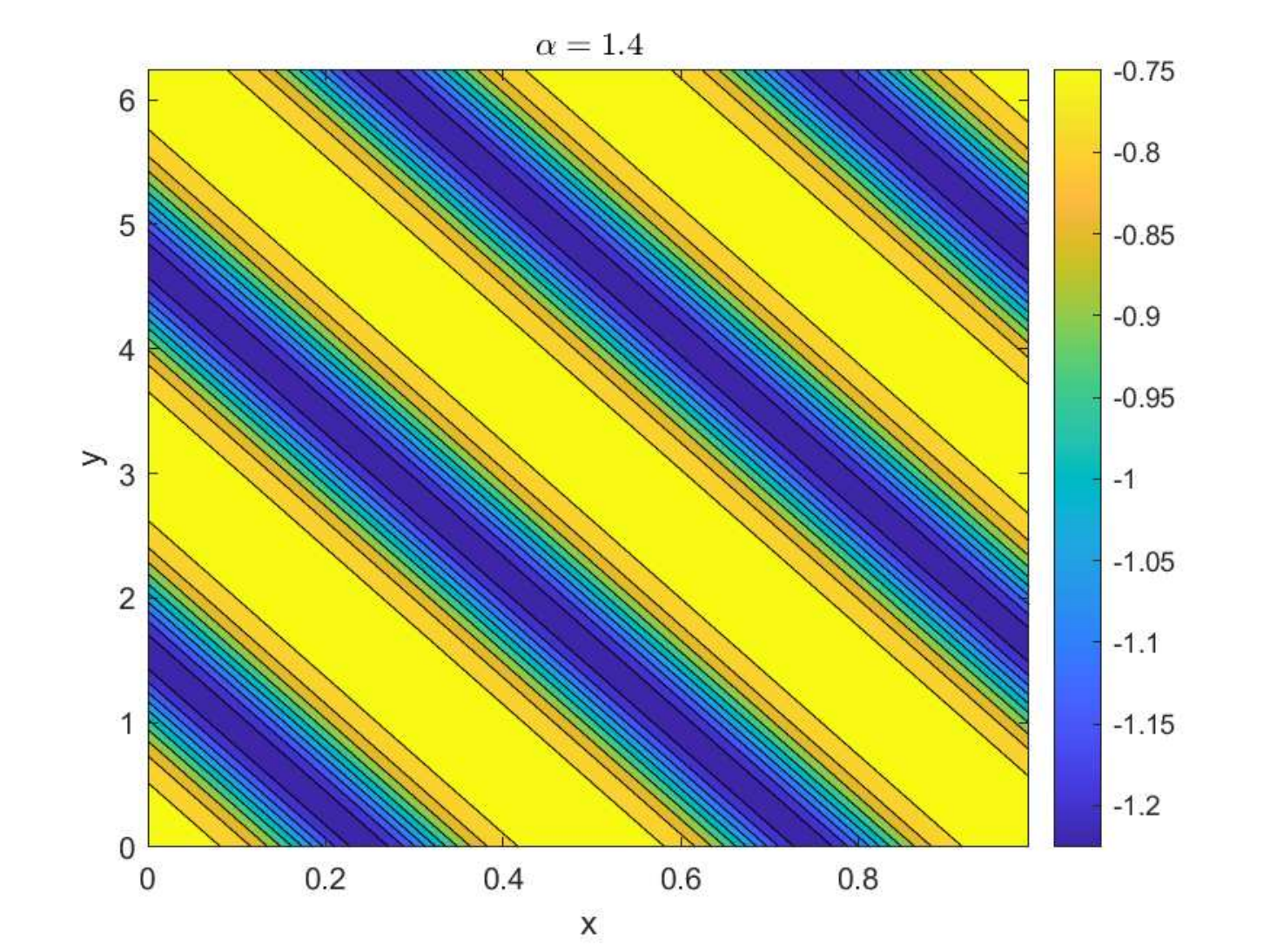}
	\end{minipage}}
	\subfigure[$T=128,\alpha=1.1$]{
		\begin{minipage}[t]{0.22\linewidth}
			\centering
			\includegraphics[width=1.2in]{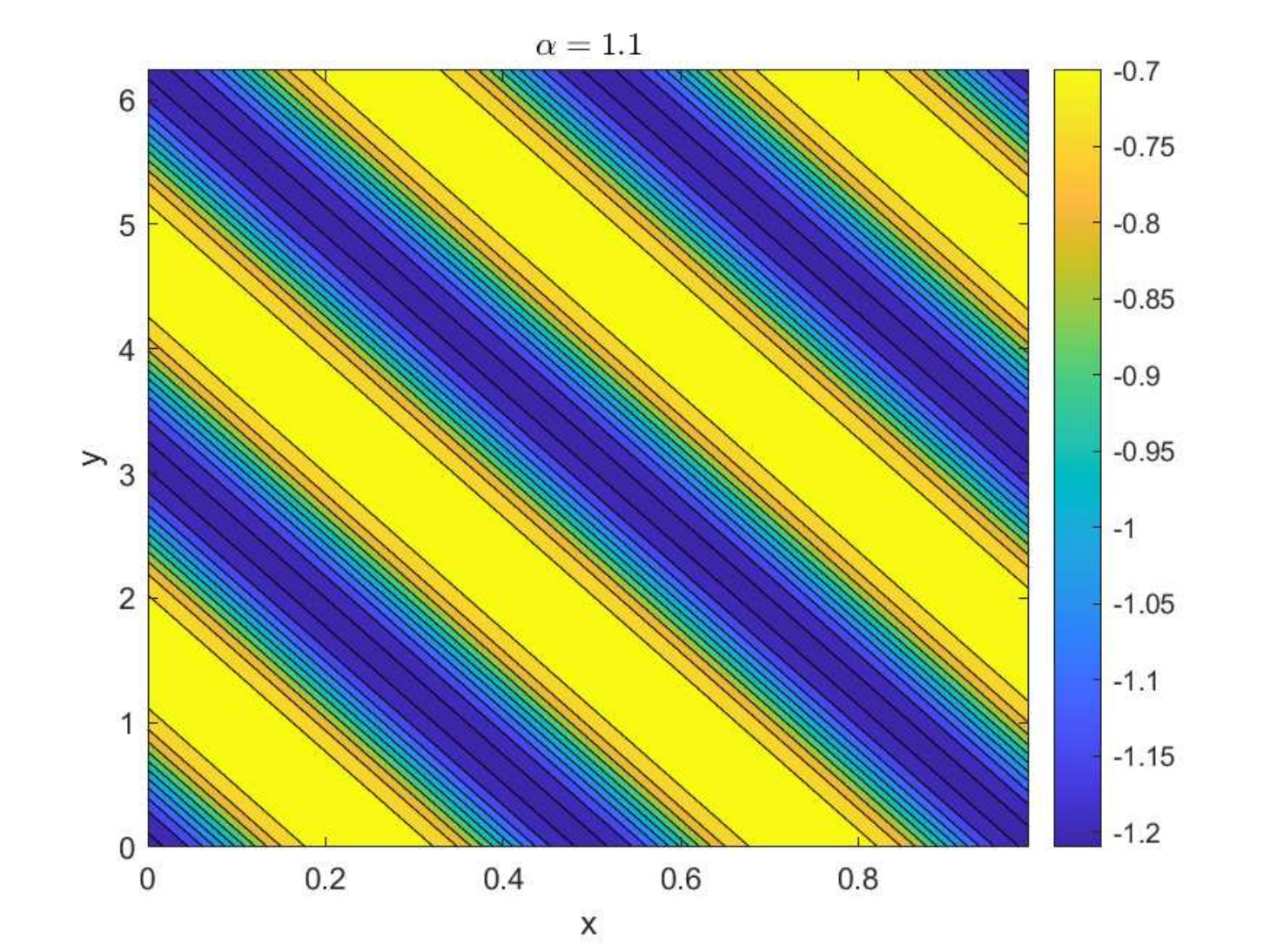}
	\end{minipage}}
	\caption{The contour figure for the NSFKGE \eqref{eq4.1.1} with $p=1$ in 2D}
	\label{fig5.42}
\end{figure}

Fig.\ref{fig5.41}-\ref{fig5.42} presents the figures and contour figures of the numerical solutions for the NSFKGE \eqref{eq4.1.1} for different $\alpha$ at $t=1/\varepsilon^2$. They indicate that $\alpha$ will affect the shape of the wave, change the periodicity of waves and generate new waves. It's obvious that as $\alpha$ decreases the relaxation time reaching the equilibrium increases. This is because the fractional diffusion process's long-range interactions and heavy-tailed influence \cite{Xu2005,Zhai2019}.

\begin{figure}[htb]
\centering
\includegraphics[scale=.34]{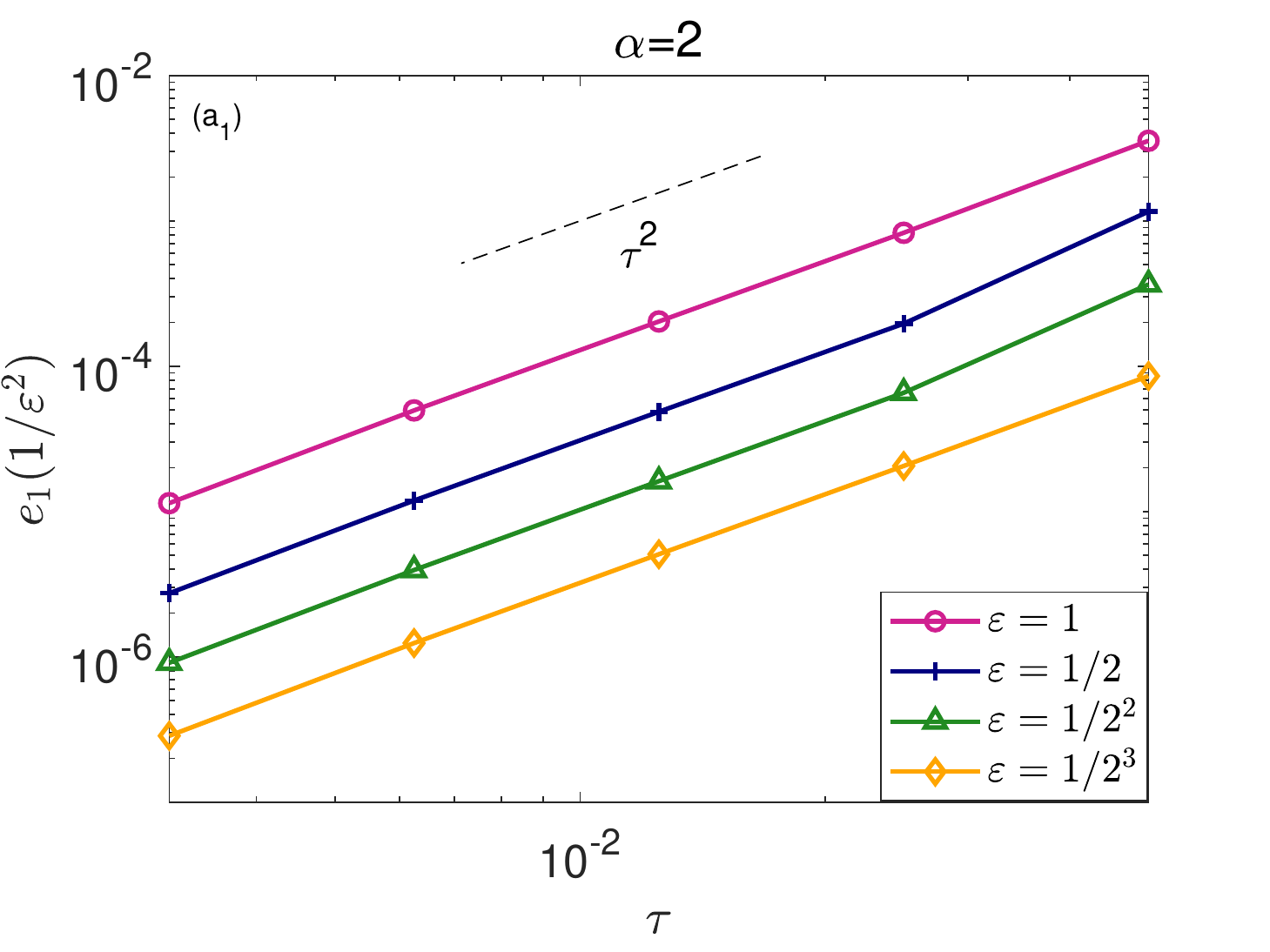}
\includegraphics[scale=.34]{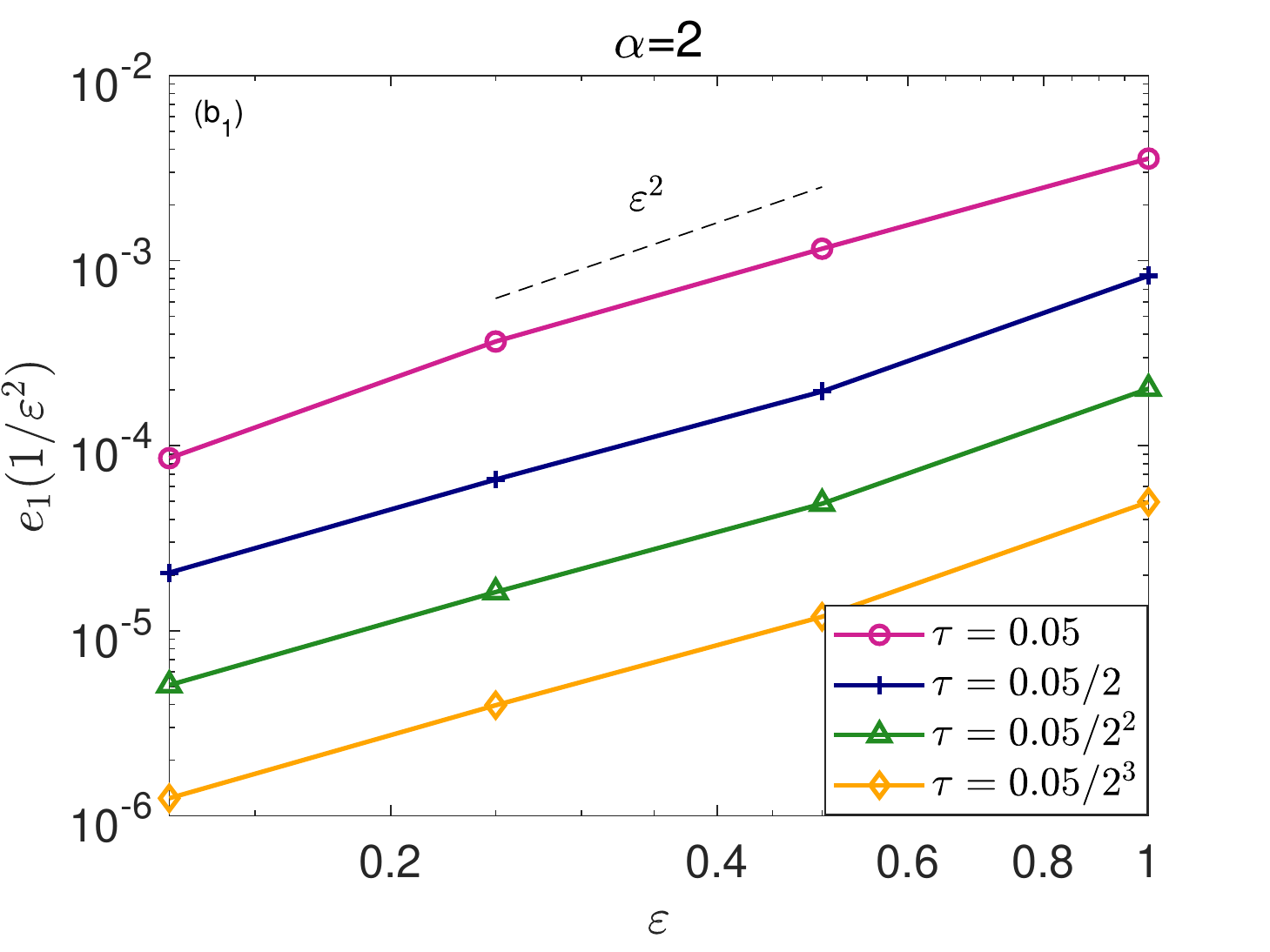}
\includegraphics[scale=.34]{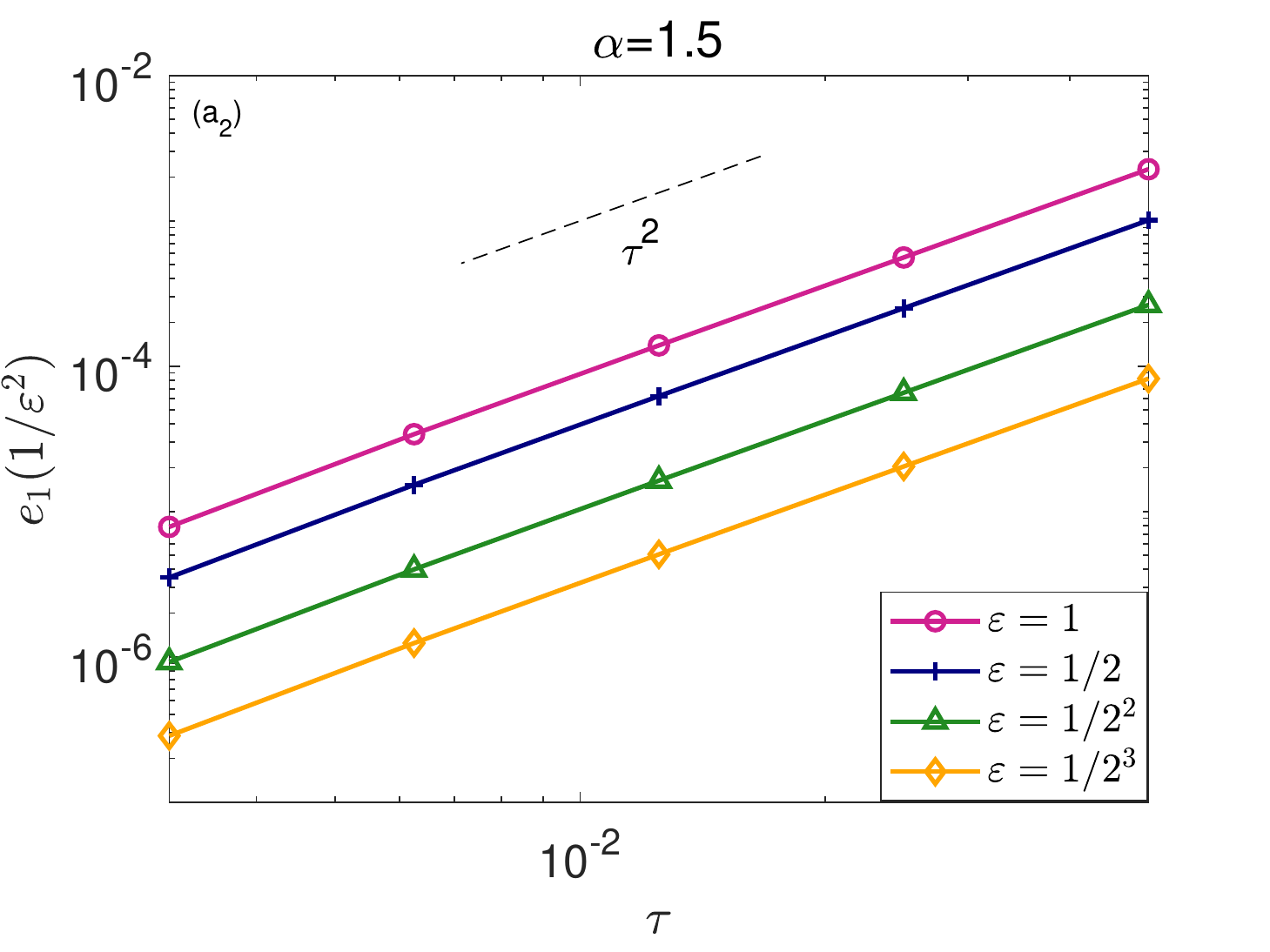}
\includegraphics[scale=.34]{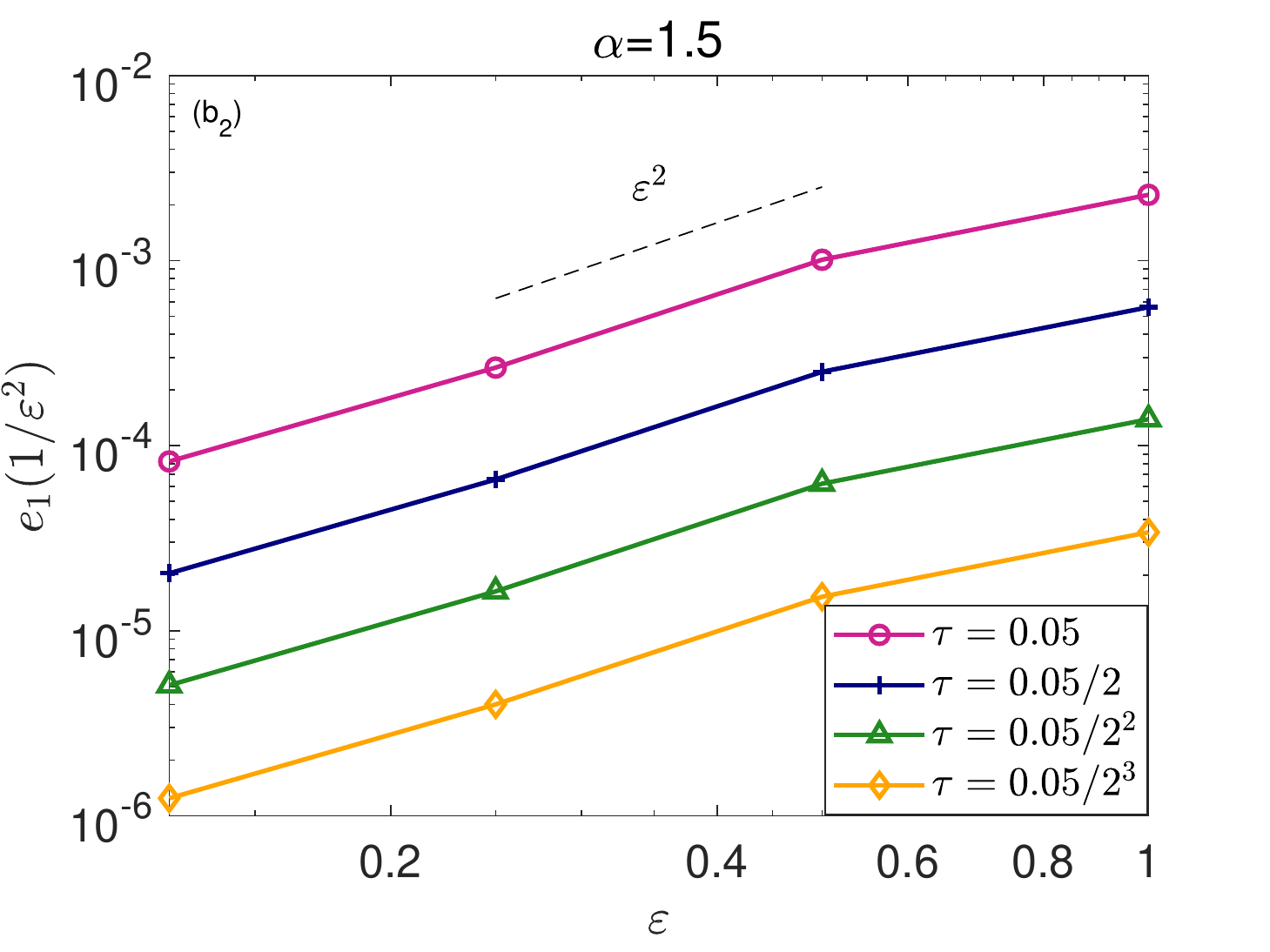}
\includegraphics[scale=.34]{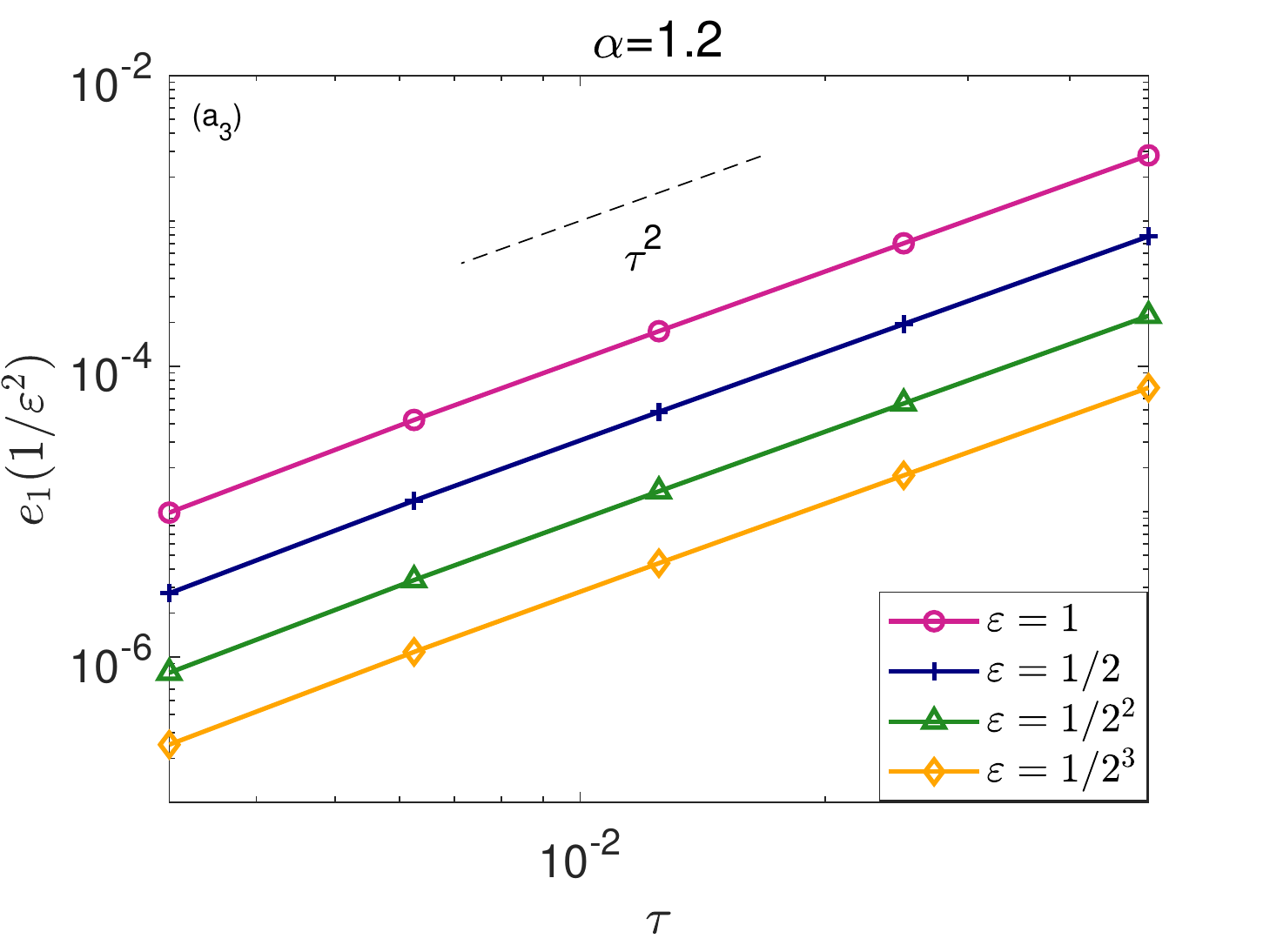}
\includegraphics[scale=.34]{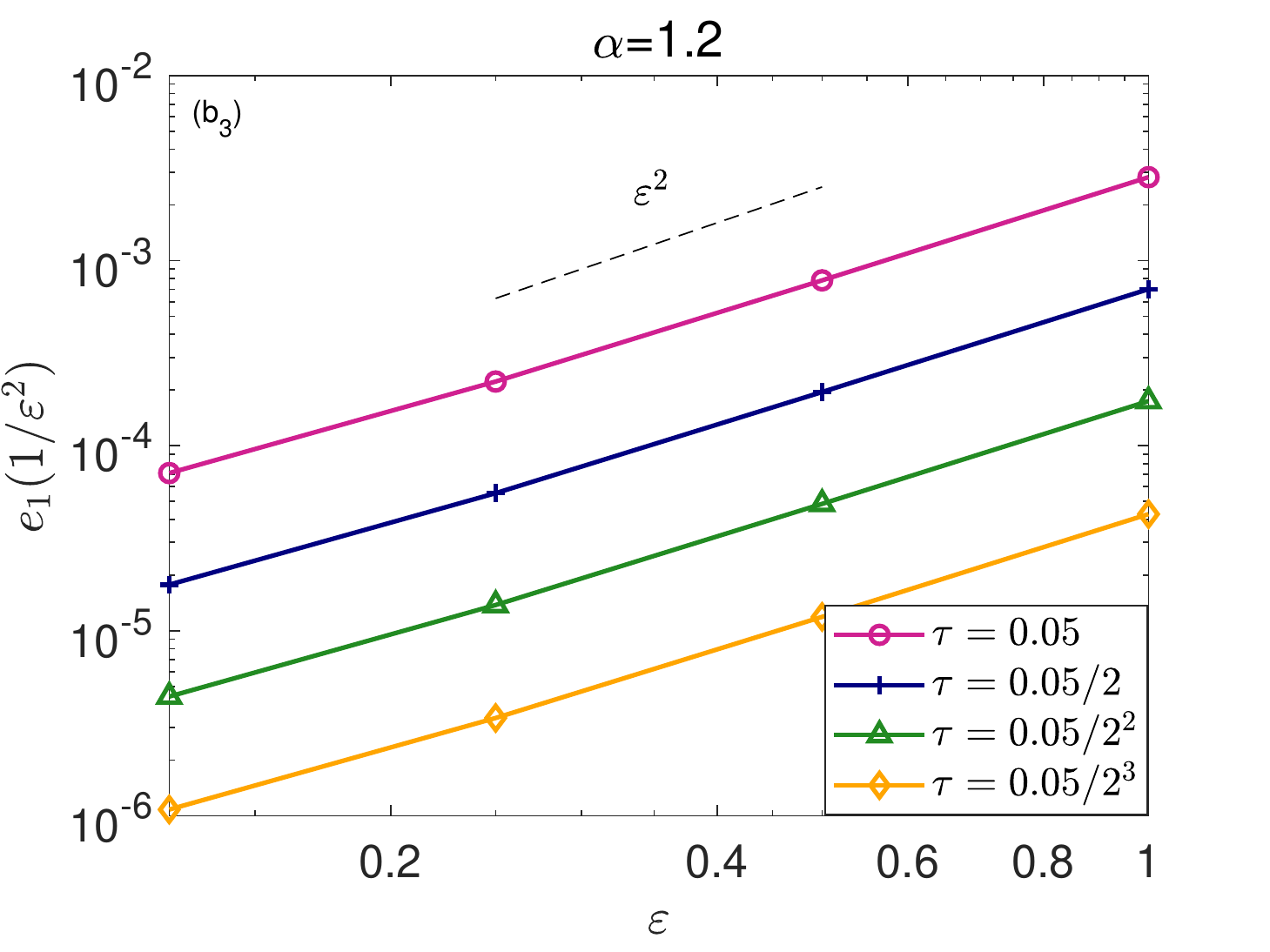}
\caption{Long-time temporal errors for the NSFKGE \eqref{eq4.1.1} with $p=1$ in 2D when $\alpha$ is taken as 2, 1.5 and 1.2 at $t=1/{\varepsilon^{2}}$ respectively.}
\label{fig5.5}
\end{figure}

Fig.\ref{fig5.5} describes the temporal errors of the NSFKGE at $t=1/\varepsilon^2$ when $\alpha$ is taken as 2, 1.5 and 1.2 respectively, which further verifies the improved uniform error bound is $O(\varepsilon^2\tau^2)$ up to the time $O\left(1 / \varepsilon^2\right)$.

\subsection{The oscillatory complex NSFKGE}
In this subsection, the numerical result of the 1D oscillatory complex NSFKGE \eqref{eq4.2.2} with $p=1$ is presented. And the complex-valued initial data is
$$
\psi_0(x)=x^2(x-1)^2+3, \quad \psi_1(x)=x(x-1)(2 x-1)+3 \textrm{i} \cos (2 \pi x), \quad x \in \Omega=(0,1).
$$

\begin{table}[htbp]
\caption{Temporal errors of the EWI method for the oscillatory complex NSFKGE \eqref{eq4.2.2} in 1D when $\alpha=2$.}
\centering
\label{tab1}
\begin{tabular}{cccccc}
\hline$e_1(r=1)$ & $\lambda_0=0.05$ & $\lambda_0 / 4$ & $\lambda_0 / 4^2$ & $\lambda_0 / 4^3$ & $\lambda_0 / 4^4$ \\
\hline$\varepsilon_0=1$ & $\mathbf{1.11 \mathrm{E}-2}$ & $6.90 \mathrm{E}-4$ & $4.31 \mathrm{E}-5$ & $2.65 \mathrm{E}-6$ & $1.24 \mathrm{E}-7$ \\
order & $-$ & $2.00$ & $2.00$ & $2.01$ & $2.21$ \\
\hline$\varepsilon_0 / 2$ & $5.90 \mathrm{E}-2$ & $\mathbf{3 . 22 \mathrm { E } - 3}$ & $2.00 \mathrm{E}-4$ & $1.25 \mathrm{E}-5$ & $7.67 \mathrm{E}-7$ \\
order & $-$ & $\mathbf{2.10}$ & $2.01$ & $2.00$ & $2.01$ \\
\hline$\varepsilon_0 / 2^2$ & $7.41 \mathrm{E}-1$ & $1.66 \mathrm{E}-2$ & $\mathbf{9.86 \mathrm { E } - 4}$ & $6.14 \mathrm{E}-5$ & $3.84 \mathrm{E}-6$ \\
order & $-$ & $2.74$ & $\mathbf{2.04}$ & $2.00$ & $2.00$ \\
\hline$\varepsilon_0 / 2^3$ & $1.43$ & $3.10 \mathrm{E}-1$ & $1.60 \mathrm{E}-2$ & $\mathbf{9.90 \mathrm { E } - 4}$ & $6.19 \mathrm{E}-5$ \\
order & $-$ & $1.11$ & $2.14$ & $\mathbf{2.01}$ & $2.00$ \\
\hline$\varepsilon_0 / 2^4$ & $2.03$ & $3.74$ & $7.76 \mathrm{E}-2$ & $4.25 \mathrm{E}-3$ & $\mathbf{2 . 6 4 \mathrm { E } - 4}$ \\
order & $-$ & $-0.44$ & $2.80$ & $2.10$ & $\mathbf{2.01}$ \\
\hline
\end{tabular}
\end{table}

\begin{table}[htbp]
\caption{Temporal errors of the EWI method for the oscillatory complex NSFKGE \eqref{eq4.2.2} in 1D when $\alpha=1.5$.}
\centering
\label{tab2}
\begin{tabular}{cccccc}
\hline$e_1(r=1)$ & $\lambda_0=0.05$ & $\lambda_0 / 4$ & $\lambda_0 / 4^2$ & $\lambda_0 / 4^3$ & $\lambda_0 / 4^4$ \\
\hline$\varepsilon_0=1$ & $\mathbf{1.22 \mathrm{E}-2}$ & $7.60 \mathrm{E}-4$ & $4.74 \mathrm{E}-5$ & $2.92 \mathrm{E}-6$ & $1.37 \mathrm{E}-7$ \\
order & $-$ & $2.00$ & $2.00$ & $2.01$ & $2.21$ \\
\hline$\varepsilon_0 / 2$ & $5.91 \mathrm{E}-2$ & $\mathbf{3 . 23 \mathrm { E } - 3}$ & $2.00 \mathrm{E}-4$ & $1.25 \mathrm{E}-5$ & $7.69 \mathrm{E}-7$ \\
order & $-$ & $2.10$ & $2.01$ & $2.00$ & $2.01$ \\
\hline$\varepsilon_0 / 2^2$ & $7.41 \mathrm{E}-1$ & $1.66 \mathrm{E}-2$ & $\mathbf{9.88 \mathrm { E } - 4}$ & $6.15 \mathrm{E}-5$ & $3.84 \mathrm{E}-6$ \\
order & $-$ & $2.74$ & $\mathbf{2.04}$ & $2.00$ & $2.00$ \\
\hline$\varepsilon_0 / 2^3$ & $1.43$ & $3.09 \mathrm{E}-1$ & $1.60 \mathrm{E}-2$ & $\mathbf{9.91 \mathrm { E } - 4}$ & $6.19 \mathrm{E}-5$ \\
order & $-$ & $1.11$ & $2.14$ & $\mathbf{2.01}$ & $2.00$ \\
\hline$\varepsilon_0 / 2^4$ & $2.03$ & $3.74$ & $7.75 \mathrm{E}-2$ & $4.25 \mathrm{E}-3$ & $\mathbf{2 . 6 4 \mathrm { E } - 4}$ \\
order & $-$ & $-0.44$ & $2.80$ & $2.10$ & $\mathbf{2.01}$ \\
\hline
\end{tabular}
\end{table}

\begin{table}[htbp]
\caption{Temporal errors of the EWI method for the oscillatory complex NSFKGE \eqref{eq4.2.2} in 1D when $\alpha=1.2$.}
\centering
\label{tab3}
\begin{tabular}{cccccc}
\hline$e_1(r=1)$ & $\lambda_0=0.05$ & $\lambda_0 / 4$ & $\lambda_0 / 4^2$ & $\lambda_0 / 4^3$ & $\lambda_0 / 4^4$ \\
\hline$\varepsilon_0=1$ & $\mathbf{1.03 \mathrm{E}-2}$ & $6.46 \mathrm{E}-4$ & $4.03 \mathrm{E}-5$ & $2.48 \mathrm{E}-6$ & $1.16 \mathrm{E}-7$ \\
order & $-$ & $2.00$ & $2.00$ & $2.01$ & $2.21$ \\
\hline$\varepsilon_0 / 2$ & $5.95 \mathrm{E}-2$ & $\mathbf{3 . 2 5 \mathrm { E } - 3}$ & $2.01 \mathrm{E}-4$ & $1.26 \mathrm{E}-5$ & $7.74 \mathrm{E}-7$ \\
order & $-$ & $\mathbf{2.10}$ & $2.01$ & $2.00$ & $2.01$ \\
\hline$\varepsilon_0 / 2^2$ & $7.42 \mathrm{E}-1$ & $1.66 \mathrm{E}-2$ & $\mathbf{9.86 \mathrm { E } - 4}$ & $6.14 \mathrm{E}-5$ & $3.83 \mathrm{E}-6$ \\
order & $-$ & $2.74$ & $\mathbf{2.04}$ & $2.00$ & $2.00$ \\
\hline$\varepsilon_0 / 2^3$ & $1.43$ & $3.10 \mathrm{E}-1$ & $1.60 \mathrm{E}-2$ & $\mathbf{9.91 \mathrm { E } - 4}$ & $6.19 \mathrm{E}-5$ \\
order & $-$ & $1.11$ & $2.14$ & $\mathbf{2.01}$ & $2.00$ \\
\hline$\varepsilon_0 / 2^4$ & $2.03$ & $3.74$ & $7.76\mathrm{E}-2$ & $4.25 \mathrm{E}-3$ & $\mathbf{2 . 6 4 \mathrm { E } - 4}$ \\
order & $-$ & $-0.44$ & $2.80$ & $2.10$ & $\mathbf{2.01}$ \\
\hline
\end{tabular}
\end{table}

From the upper triangle above the diagonal with bold letters in Table \ref{tab1}-\ref{tab3}, we find that the temporal errors are $O\left(\lambda^2\right)$ convergence accuracy when $\lambda \lesssim \varepsilon^2$ and in $H^{\alpha/2}$-norm behave like $O\left(\lambda^2 / \varepsilon^2\right)$, which confirm the improved error bound \eqref{eq4.2.3}. In addition, the fractional index $\alpha$ will not affect the result.

\section{Conclusions}
In this paper, the second order Deuflhard-type integrator was used to deal with the integral generated in the time semi-discretization, and Fourier pseudospectral method was applied to discrete the space. With the aid of the RCO technique, we strictly proved that the $H^{\alpha/2}$-norm error bound of NSFKGE was $O\left(h^m+\varepsilon^2 \tau^2\right)$ up to the time $O(1/\varepsilon^2)$. In RCO, the high frequency modes $(>1/\tau_{0})$, where $\tau_{0}$ is the frequency cut-off parameter, were controlled by the regularity of the exact solution, and the phase cancellation and energy method were used to evaluate the low frequency modes $(\leq 1/\tau_{0})$. Numerical results in 1D and 2D verified our error estimates and demonstrated they are sharp, the numerical example in 1D also showed the NSFKGE has the property of energy conservation, and the results in 2D indicated the the fractional diffusion process's long-range interactions and heavy-tailed influence.

\begin{acknowledgements}
The authors would like to specially thank Professor Weizhu Bao and Dr. Yue Feng for their valuable suggestions and comments. This work has been supported by the National Natural Science Foundation of China (Grants Nos. 12120101001, 12001326, 12171283), Natural Science Foundation of Shandong Province (Grant Nos. ZR2021ZD03, ZR2020QA032, ZR2019ZD42).
\end{acknowledgements}

\section*{Data Availability}
%All data generated or analysed during this study are included in this article.
The datasets analysed during the current study are available from the corresponding author on reasonable request.

% Authors must disclose all relationships or interests that
% could have direct or potential influence or impart bias on
% the work:
%
\section*{Declarations}
\textbf{Competing Interests}\
The authors declare that they have no known competing financial interests or personal relationships that could have appeared to influence the work reported in this paper.

\section*{Appendix}
%\section*{Appendix A. Proof of Eq. \eqref{eq3.1.1}}
\appendix
\section{Proof of Eq. \eqref{eq3.1.1}}
\begin{lemma}\label{lemA.1}
%$\mathbf{Lemma \ A.1}$
The exact solution of \eqref{eq2.1.4} with initial data $\varphi_0$ is denoted as $\varphi(t)=S_{e, t}\left(\varphi_0\right)$. Assume $\varphi(t) \in H^{m+\alpha/2}(m > 1)$, then for $0<\varepsilon \leq 1$ and $0<\tau \leq 1 / \varepsilon^2$, the local error of the scheme \eqref{eq2.2.4} is bounded by
$$
\left\| \mathcal{E}^{n} \right\|_{m+\alpha/2} = \left\|S_\tau\left(\varphi\left(t_n\right)\right)-S_{e, \tau}\left(\varphi\left(t_n\right)\right)\right\|_{m+\alpha/2} \leq K_0 \varepsilon^2 \tau^3,
$$
where

\begin{equation*}
\begin{aligned}
S_\tau\left(\varphi\left(t_n\right)\right)=e^{i \tau\langle\nabla\rangle_{\alpha}} \varphi\left(t_n\right)+\varepsilon^2\cdot \frac{\tau}{2}\left[G\left(\varphi\left(t_n+\tau\right)\right)+ e^{i\tau\langle\nabla\rangle_{\alpha}}G\left(\varphi\left(t_n\right)\right)\right],
\end{aligned}
\end{equation*}

\begin{equation*}
\begin{aligned}
S_{e, \tau}\left(\varphi\left(t_n\right)\right)=e^{i \tau\langle\nabla\rangle_{\alpha}} \varphi\left(t_n\right)+\varepsilon^2 \int_0^\tau e^{i(\tau-s)\langle\nabla\rangle_{\alpha}} G\left(\varphi\left(t_n+s\right)\right) \textrm{d} s,
\end{aligned}
\end{equation*}

and $K_0$ depends on $\|\varphi\|_{C^{2}\left(\left[0, T_{\varepsilon}\right] ; H^{m+\alpha/2}\right)}$.
\end{lemma}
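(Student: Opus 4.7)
The plan is to reduce Lemma \ref{lemA.1} to an identity for the trapezoidal quadrature error, and then to bound the second derivative of the integrand using the smoothing of $\langle\nabla\rangle_\alpha^{-1}$, the unitarity of $e^{it\langle\nabla\rangle_\alpha}$ on every $H^{s}$, and the Sobolev algebra property. Setting $f(s) = e^{i(\tau-s)\langle\nabla\rangle_\alpha} G(\varphi(t_n+s))$, so that $f(0) = e^{i\tau\langle\nabla\rangle_\alpha}G(\varphi(t_n))$ and $f(\tau) = G(\varphi(t_n+\tau))$, we have $S_\tau(\varphi(t_n)) - S_{e,\tau}(\varphi(t_n)) = \varepsilon^2\bigl[\tfrac{\tau}{2}(f(0)+f(\tau)) - \int_0^\tau f(s)\,ds\bigr]$. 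Applying the quadrature identity \eqref{eq3.1.24} to the bracket rewrites it as $\varepsilon^{2}\tfrac{\tau^{3}}{2}\int_0^1 \mu(1-\mu) f''(\mu\tau)\,d\mu$. It therefore suffices to prove the uniform bound $\sup_{0\leq s\leq \tau} \|f''(s)\|_{m+\alpha/2} \lesssim 1$, after which the $\tau^3$ factor together with $\int_0^1\mu(1-\mu)\,d\mu = 1/6$ delivers the claim.

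Next I would compute $f''$ explicitly. Using $[e^{it\langle\nabla\rangle_\alpha},\langle\nabla\rangle_\alpha^{\pm 1}] = 0$ together with $\langle\nabla\rangle_\alpha G(\varphi) = ig(\varphi)$ (which follows from $G = i\langle\nabla\rangle_\alpha^{-1}g$), a direct two-fold differentiation yields
\begin{equation*}
f''(s) = e^{i(\tau-s)\langle\nabla\rangle_\alpha}\bigl[-i\langle\nabla\rangle_\alpha g(\varphi) + 2\,\partial_s g(\varphi) + i\langle\nabla\rangle_\alpha^{-1}\partial_{ss} g(\varphi)\bigr],
\end{equation*}
with $\varphi = \varphi(t_n+s)$. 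Pushing through the unitary propagator (which preserves every $H^{s}$ norm) and using the smoothing and growth bounds $\|\langle\nabla\rangle_\alpha^{\pm 1}\cdot\|_{s} \lesssim \|\cdot\|_{s\pm\alpha/2}$ stated in Section \ref{sec:2.1}, the three summands reduce respectively to the estimation of $\|g(\varphi)\|_{m+\alpha}$, $\|\partial_s g(\varphi)\|_{m+\alpha/2}$, and $\|\partial_{ss} g(\varphi)\|_{m}$.

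The last two quantities are routine given $\varphi \in C^2([0,T_\varepsilon];H^{m+\alpha/2})$: the chain rule expands $\partial_s g$ and $\partial_{ss} g$ into polynomial combinations of $\varphi, \partial_s\varphi, \partial_{ss}\varphi$, and the Sobolev algebra structure of $H^{m+\alpha/2}$ in 1D (valid since $m>1$) closes the estimates uniformly on $[0,T_\varepsilon]$. The genuine obstacle is the first term $\|g(\varphi)\|_{m+\alpha}$, which requires $\varphi \in H^{m+\alpha}$, strictly more regularity than the hypothesis $\varphi \in H^{m+\alpha/2}$ provides directly. To bridge this gap I would bootstrap through the equation \eqref{eq2.1.4}: since $\partial_t\varphi = i\langle\nabla\rangle_\alpha\varphi + \varepsilon^{2} G(\varphi)$, solving for $\langle\nabla\rangle_\alpha\varphi$ and using $\partial_t\varphi \in H^{m+\alpha/2}$ together with $G(\varphi) \in H^{m+\alpha}$ (the latter obtained by applying $\langle\nabla\rangle_\alpha^{-1}$ to $g(\varphi)\in H^{m+\alpha/2}$) forces $\langle\nabla\rangle_\alpha\varphi \in H^{m+\alpha/2}$, i.e. $\varphi \in H^{m+\alpha}$, which is exactly what is needed. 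Assembling the three estimates gives $\|f''(s)\|_{m+\alpha/2} \leq C$ uniformly in $s \in [0,\tau]$, and hence $\|\mathcal{E}^n\|_{m+\alpha/2} \leq K_0 \varepsilon^{2}\tau^{3}$, with $K_0$ depending only on $\|\varphi\|_{C^2([0,T_\varepsilon];H^{m+\alpha/2})}$, the algebra constant of $H^{m+\alpha/2}$, and the smoothing constants for $\langle\nabla\rangle_\alpha^{\pm 1}$. The hardest step is precisely this half-derivative bootstrap; the remainder is the standard trapezoidal local truncation error analysis adapted to the fractional pseudodifferential setting.
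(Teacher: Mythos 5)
Your proof follows the same route as the paper's: write the local error as the trapezoidal quadrature defect for $f(s)=e^{i(\tau-s)\langle\nabla\rangle_{\alpha}}G\left(\varphi\left(t_n+s\right)\right)$, apply the Peano-kernel identity \eqref{eq3.1.24}, and bound $\sup_s\|f''(s)\|_{m+\alpha/2}$ by a constant so that the prefactor $\varepsilon^2\tau^3$ gives the claim. The paper's own proof stops at asserting that last bound, whereas you correctly observe that the term $\langle\nabla\rangle_{\alpha}g(\varphi)$ appearing in $f''$ requires $\varphi\in H^{m+\alpha}$ rather than the assumed $H^{m+\alpha/2}$, and you close that gap by bootstrapping the extra $\alpha/2$ derivatives through the equation \eqref{eq2.1.4} --- a valid and in fact necessary elaboration of a step the paper leaves implicit.
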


\begin{proof}
%An application of variation-of-constants formula leads to the following representation of the exact solution
%$$
%\textrm{e}^{i \tau\langle\nabla\rangle_{\alpha}} \varphi\left(t_n\right)+\varepsilon^2 \int_0^\tau \textrm{e}^{i(\tau-s)\langle\nabla\rangle_{\alpha}} G\left(\varphi\left(t_n+s\right)\right) \textrm{d} s.
%$$

\begin{equation*}
\begin{aligned}
&\left\| \mathcal{E}^{n} \right\|_{m+\alpha/2}\\
=& \left\|S_\tau\left(\varphi\left(t_n\right)\right)-S_{e, \tau}\left(\varphi\left(t_n\right)\right)\right\|_{m+\alpha/2}\\
=&\left\|\varepsilon^2\cdot \frac{\tau}{2}\left[G\left(\varphi\left(t_n+\tau\right)\right)+ \textrm{e}^{i\tau\langle\nabla\rangle_{\alpha}}G\left(\varphi\left(t_n\right)\right)\right]
-\varepsilon^2 \int_0^\tau \textrm{e}^{i(\tau-s)\langle\nabla\rangle_{\alpha}} G\left(\varphi\left(t_n+s\right)\right)\textrm{d}s\right\|_{m+\alpha/2}\\
=&\left\|\frac{\varepsilon^2\tau^{3}}{2}\int_{0}^{1}\sigma(1-\sigma)G''(\sigma\tau)\textrm{d}\sigma\right\|_{m+\alpha/2}
\leq K_0 \varepsilon^2 \tau^3.
\end{aligned}
\end{equation*}\qed
\end{proof}

%\section*{Uniform error bound}
Suppose
\begin{equation}\label{eqA.1}
\begin{aligned}
\|e^{[k]}\|_{\infty}\leq C_{0}, \quad 0\leq k\leq n\leq j, \quad n,j\in \mathbb{Z}^{+}.
\end{aligned}
\end{equation}
We also suppose that
\begin{equation}\label{eqA.2}
\begin{aligned}
\|e^{[k+1]}\|_{\infty}\leq C_{1}, \quad \text{if} \quad \|e^{[k]}\|_{\infty}\leq C_{0}.
\end{aligned}
\end{equation}
%From the Proposition 3.5 (iii) in reference \cite{Bao2022}, we know the nonlinearity term $G$ satisfies the local Lipschitz condition.
\setcounter{thm}{0}
\begin{thm}\label{thmA.1}[\textbf{Uniform error bound}]
%\textbf{Theorem\ A.1}[\textbf{Uniform error bound}]
Suppose that $\varphi^{[n+1]}\left(n=0,1,2, \ldots, T_{\varepsilon}/{\tau}-1\right)$ are solutions of the scheme \eqref{eq2.2.4} and $C, C^{*}$ are suitable positive constants independent of $n, \tau$,  $\varepsilon$. Assume $\|e^{[k+1]}\|_{\infty} \leq C_1$ under the condition $\|e^{[k]}\|_{\infty}\leq C_{0}(0 \leq k \leq n)$. If $\tau$ is sufficiently small and $0<\tau <\min\left\{\frac{2}{L\varepsilon^2}, \left(\frac{C_{0}}{C^{*}CK_{0}T\text{e}^{CLT}}\right)^{\frac{1}{2}}\right\}$, then
\begin{equation}\label{eqA.3}
\begin{aligned}
\left\|e^{[n+1]}\right\|_{m+\alpha/2} \leq {CK_{0}T\tau^2}\text{e}^{CL(n+1)\varepsilon^2\tau}.
\end{aligned}
\end{equation}
\end{thm}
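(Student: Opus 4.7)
The plan is to derive a recursive inequality for $\|e^{[n+1]}\|_{m+\alpha/2}$ from the error equation \eqref{eq3.1.3}, estimate the local truncation error via Lemma \ref{lemA.1}, bound the nonlinear remainder $Q^n$ by Lipschitz-type estimates in $H^{m+\alpha/2}$, and close the argument with a discrete Gronwall inequality. A bootstrap on the $L^\infty$-bound of $e^{[k]}$ is the ingredient that transforms the conditional assumption into an unconditional long-time estimate, and pinning down the precise $\tau$-threshold is where the stated upper bound on $\tau$ comes from.

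First, I would use the unitarity of $\mathrm{e}^{i\tau\langle\nabla\rangle_\alpha}$ in $H^{m+\alpha/2}$, together with \eqref{eq3.1.3}, to obtain
\begin{equation*}
\|e^{[n+1]}\|_{m+\alpha/2}\leq \|e^{[n]}\|_{m+\alpha/2}+\|Q^n\|_{m+\alpha/2}+\|\mathcal{E}^n\|_{m+\alpha/2}.
\end{equation*}
Lemma \ref{lemA.1} immediately provides $\|\mathcal{E}^n\|_{m+\alpha/2}\leq K_0\varepsilon^2\tau^3$. The key nonlinear step is to bound $Q^n$. Using the structure $G(\varphi)=i\langle\nabla\rangle_\alpha^{-1}(\varphi+\bar\varphi)^3/8$, the smoothing property $\|\langle\nabla\rangle_\alpha^{-1}f\|_{m+\alpha/2}\lesssim \|f\|_m$, and the fact that $H^m$ is a Banach algebra for $m>1/2$, I would derive
\begin{equation*}
\|G(\varphi^{[k]})-G(\varphi(t_k))\|_{m+\alpha/2}\leq L\,\|e^{[k]}\|_{m+\alpha/2},
\end{equation*}
where $L$ depends on the a priori bounds $\|\varphi^{[k]}\|_{m+\alpha/2},\|\varphi(t_k)\|_{m+\alpha/2}\lesssim 1$ coming from \eqref{eq3.1.1} (the latter itself invoking the induction hypothesis $\|e^{[k]}\|_\infty\leq C_0$ combined with the Sobolev embedding and assumption (A)). This yields
\begin{equation*}
\|Q^n\|_{m+\alpha/2}\leq \tfrac{L\varepsilon^2\tau}{2}\bigl(\|e^{[n+1]}\|_{m+\alpha/2}+\|e^{[n]}\|_{m+\alpha/2}\bigr).
\end{equation*}

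Next, I would absorb the implicit $\|e^{[n+1]}\|_{m+\alpha/2}$ term on the left: since $\tau<2/(L\varepsilon^2)$, the coefficient $1-L\varepsilon^2\tau/2$ is strictly positive and bounded below, so one obtains
\begin{equation*}
\|e^{[n+1]}\|_{m+\alpha/2}\leq (1+CL\varepsilon^2\tau)\,\|e^{[n]}\|_{m+\alpha/2}+C K_0\varepsilon^2\tau^3,
\end{equation*}
after a Taylor expansion of $(1-L\varepsilon^2\tau/2)^{-1}$. Iterating from $e^{[0]}=0$, summing the geometric series and using $(1+CL\varepsilon^2\tau)^{n+1}\leq \mathrm{e}^{CL(n+1)\varepsilon^2\tau}$, I arrive at
\begin{equation*}
\|e^{[n+1]}\|_{m+\alpha/2}\leq C K_0\,(n+1)\tau^2\,\mathrm{e}^{CL(n+1)\varepsilon^2\tau}\leq C K_0 T\,\tau^2\,\mathrm{e}^{CL(n+1)\varepsilon^2\tau},
\end{equation*}
where the last step uses $(n+1)\varepsilon^2\tau\leq T$ throughout the long-time window $n+1\leq T_\varepsilon/\tau$. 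This is precisely \eqref{eqA.3}.

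The main obstacle, and the reason for the second condition $\tau<(C_0/(C^*C K_0 T\mathrm{e}^{CLT}))^{1/2}$, is the bootstrap that validates the standing $L^\infty$ assumption. By Sobolev embedding, $\|e^{[n+1]}\|_\infty\leq C^*\|e^{[n+1]}\|_{m+\alpha/2}\leq C^*C K_0 T\tau^2\mathrm{e}^{CLT}$. The threshold on $\tau$ is chosen precisely so that this quantity does not exceed $C_0$, hence the induction hypothesis $\|e^{[k]}\|_\infty\leq C_0$ propagates from step $k=n$ to step $k=n+1$, and the estimate \eqref{eqA.3} holds unconditionally for every $0\leq n\leq T_\varepsilon/\tau-1$. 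The delicate point is making sure the constants $C$, $C^*$, $L$ and $K_0$ entering the bootstrap are independent of $n$, $\tau$, and $\varepsilon$, which is guaranteed by (A) and by keeping $\varphi^{[k]}$ uniformly bounded in $H^{m+\alpha/2}$ via \eqref{eq3.1.1}.
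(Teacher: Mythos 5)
Your proposal is correct and follows essentially the same route as the paper: a Lipschitz bound on the nonlinearity combined with Lemma \ref{lemA.1} gives the one-step recursion, the implicit term is absorbed using $\tau<2/(L\varepsilon^2)$, a discrete Gronwall/geometric-series iteration yields \eqref{eqA.3}, and the Sobolev-embedding bootstrap with the second $\tau$-threshold propagates the $L^\infty$ induction hypothesis. The only blemish is a typo in your intermediate bound, which should read $CK_0(n+1)\varepsilon^2\tau^3$ rather than $CK_0(n+1)\tau^2$ before applying $(n+1)\varepsilon^2\tau\leq T$; the final estimate is unaffected.
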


\begin{proof}
We adopt the mathematical induction method to prove \eqref{eqA.3}.
%For $n=0$,
%\begin{equation}\label{eqA.4}
%\begin{aligned}
%\left\|e^{[1]}\right\|_{m+\alpha/2}
%\leq& \frac{\varepsilon^2\tau}{2}\widetilde{L}\left\|e^{[1]}\right\|_{m+\alpha/2}+K_{0}\varepsilon^2\tau^3.
%\end{aligned}
%\end{equation}
Obviously, \eqref{eqA.3} holds for $n=0$. Then we prove \eqref{eqA.3} holds for any $0\leq n\leq T_{\varepsilon}/{\tau}-1$.

Suppose $\|e^{[k]}\|_{\infty}\leq C_{0}(0 \leq k \leq j)$. By Lemma \ref{lemA.1} and the Proposition 3.1 (iii) in \cite{Feng2021c}, we have
\begin{equation}\label{eqA.5}
\begin{aligned}
&\left\|e^{[k+1]}\right\|_{m+\alpha/2}\\
=&\left\|S_{\tau}(\varphi^{[k]})-S_{\tau}(\varphi(t_{k}))+S_{\tau}(\varphi(t_{k}))-S_{e,\tau}(\varphi(t_{k}))\right\|_{m+\alpha/2}\\
\leq& \left\|e^{[k]}\right\|_{m+\alpha/2}+\frac{\varepsilon^2\tau}{2}L_{1}\left\|e^{[k]}\right\|_{m+\alpha/2}
+\frac{\varepsilon^2\tau}{2}L_{2}\left\|e^{[k+1]}\right\|_{m+\alpha/2}+K_{0}\varepsilon^2\tau^3.
\end{aligned}
\end{equation}
%Then
%\begin{equation}\label{eqA.6}
%\begin{aligned}
%\left\|e^{[k+1]}\right\|_{m+\alpha/2}-\left\|e^{[k]}\right\|_{m+\alpha/2}\leq & \frac{\varepsilon^2\tau}{2}L_{1}\left\|e^{[k]}\right\|_{m+\alpha/2}
%+\frac{\varepsilon^2\tau}{2}L_{2}\left\|e^{[k+1]}\right\|_{m+\alpha/2}\\
%&+K_{0}\varepsilon^2\tau^3.
%\end{aligned}
%\end{equation}
Summing $k$ from 0 to $n$, for sufficiently small $\tau$, when $0<\tau<\frac{2}{L\varepsilon^2}$, one can get
%\begin{equation}\label{eqA.7}
%\begin{aligned}
%\left\|e^{[n+1]}\right\|_{m+\alpha/2}&\leq \left\|e^{[0]}\right\|_{m+\alpha/2}+ \frac{\varepsilon^2\tau}{2}L_{1}\sum_{k=0}^{n}\left\|e^{[k]}\right\|_{m+\alpha/2}
%+\frac{\varepsilon^2\tau}{2}L_{2}\sum_{k=0}^{n}\left\|e^{[k+1]}\right\|_{m+\alpha/2}\\
%&+K_{0}(n+1)\varepsilon^2\tau^3.
%\end{aligned}
%\end{equation}

\begin{equation}\label{eqA.8}
\begin{aligned}
\left\|e^{[n+1]}\right\|_{m+\alpha/2}\leq  {CL\varepsilon^2\tau}\sum_{k=0}^{n}\left\|e^{[k]}\right\|_{m+\alpha/2}+{CK_{0}T\tau^2},
\end{aligned}
\end{equation}
where $L=\max\{L_{1},L_{2}\}$.

Using Gronwall's inequality yields
\begin{equation}\label{eqA.9}
\begin{aligned}
\left\|e^{[n+1]}\right\|_{m+\alpha/2} \leq {CK_{0}T\tau^2}\text{e}^{CL(n+1)\varepsilon^2\tau},\quad 0\leq n\leq j.
\end{aligned}
\end{equation}
Next, we prove \eqref{eqA.9} holds for $n=j+1$.
%\begin{equation}\label{eqA.10}
%\begin{aligned}
%\left\|e^{[j+1]}\right\|_{\infty}\leq C\left\|e^{[j+1]}\right\|_{m+\alpha/2}.
%\end{aligned}
%\end{equation}
%Then
\begin{equation}\label{eqA.11}
\begin{aligned}
\left\|e^{[j+1]}\right\|_{\infty}\leq C^{*}\left\|e^{[j+1]}\right\|_{m+\alpha/2}
\leq {C^{*}CK_{0}T\tau^2}\text{e}^{CL(n+1)\varepsilon^2\tau}\leq C_{0},
\end{aligned}
\end{equation}
where $0<\tau\leq \left(\frac{C_{0}}{C^{*}CK_{0}T\text{e}^{CLT}}\right)^{\frac{1}{2}}$.
Thus, $\|e^{[k]}\|_{\infty} \leq C_0$ for $0\leq k\leq j+1$.
We also derive that
\begin{equation}\label{eqA.12}
\begin{aligned}
\left\|e^{[j+2]}\right\|_{m+\alpha/2}&\leq {CK_{0}T\tau^2}+{CL\varepsilon^2\tau}\sum_{k=0}^{j+1}\left\|e^{[k]}\right\|_{m+\alpha/2}\\
&\leq {CK_{0}T\tau^2}\left(1+{CL\varepsilon^2\tau}\sum_{k=0}^{j+1}\text{e}^{CL(k+1)\varepsilon^2\tau}\right)\\
&\leq {CK_{0}T\tau^2}\text{e}^{CL(j+2)\varepsilon^2\tau}.
\end{aligned}
\end{equation}
Thus, \eqref{eqA.9} holds for $n=j+1$. The proof is completed.\qed
\end{proof}
For the fully discrete numerical solution, we have the similar conclusion.
%By Theorem \ref{thmA.1} and Lemma \ref{lemA.1}, we obtain \eqref{eq3.1.1}.
%\end{appendix}

% BibTeX users please use one of
%\bibliographystyle{spbasic}      % basic style, author-year citations
%\bibliographystyle{spmpsci}      % mathematics and physical sciences
%\bibliographystyle{spphys}       % APS-like style for physics
%\bibliography{}   % name your BibTeX data base

% Non-BibTeX users please use

%\clearpage

%\section*{Appendix }
%\setcounter{appendices}{0}
%\begin{appendix}

\end{document}